\theoremstyle{definition}
\newtheorem{definition}{Definition}
\newtheorem{theorem}{Theorem}
\newtheorem*{theorem*}{Theorem}
\newtheorem*{conjecture*}{Conjecture}
\newtheorem{lemma}{Lemma}
\newtheorem*{lemma*}{Lemma}
\newtheorem{proposition}{Proposition}
\newtheorem{corollary}{Corollary}
\newtheorem*{remark}{Remark}
\newtheorem{example}{Example}
\newtheorem{question}{Question}
\newcommand{\Z}{\mathbb{Z}}
\newcommand{\F}{\mathbb{F}}
\newcommand{\Fp}{\mathbb{F}_p}
\newcommand{\N}{\mathbb{N}}
\newcommand{\C}{\mathbb{C}}
\newcommand{\ZnZ}{\mathbb{Z}\slash n\mathbb{Z}}
\newcommand{\modU}{\overline{U}}
\newcommand{\modV}{\overline{V}}
\newcommand{\rank}{\mathrm{rank}}
\newcommand{\eps}{\epsilon}
\newcommand{\vecx}{{\bf x}}
\title{Monodromy Groups of Dessins d'Enfant on Rational Polygonal Billiards Surfaces}
\author{
Richard A. Moy\\
Lee University\\
\texttt{rmoy@leeuniversity.edu}\\

\and

Jason Schmurr\\
Lee University\\
\texttt{jschmurr@leeuniversity.edu}\\

\and

Japheth Varlack\\
Wake Forest University\\
\texttt{varlja22@wfu.edu}
}
\begin{document}

\maketitle

\begin{abstract}
A \emph{dessin d’enfant}, or \emph{dessin}, is a bicolored graph embedded into a Riemann surface, and the monodromy group is an algebraic invariant of the dessin generated by rotations of edges about black and white vertices. A \emph{rational polygonal billiards surface} is a Riemann surface that arises from the dynamical system of billiards within a rational-angled polygon. In this paper, we compute the monodromy groups of dessins embedded into rational polygonal billiards surfaces and identify all possible monodromy groups arising from rational triangular billiards surfaces.
\end{abstract}

\section{Introduction}

In \cite{SMG19}, the authors investigated the connection between rational triangular billiards surfaces and Cayley graphs. In \cite{MMSV21}, the authors modified this approach by drawing {\it dessins d'enfant} on the rational triangular billiards surfaces and classifying their monodromy groups. In this paper, we generalize the main result in \cite{MMSV21} by computing the monodromy groups of {\it dessins d'enfant} drawn on billiard surfaces of {\it $k$-gons} with $k\geq 3$.

We show that all such monodromy groups can be expressed as the semidirect product $N\rtimes C_k$, where $N$ is isomorphic to the column span of a circulant matrix over $\mathbb
{Z}/n\mathbb{Z}$ for an appropriate integer $n$ (Theorem \ref{theorem:main1} and Lemma \ref{lemma:N}) and $C_k$ is the cyclic group of order $k$.

In Section \ref{section:structure_of_N}, we show how to use the Smith Normal Form to explicitly compute the monodromy group of any given rational billiards surface (Theorem \ref{theorem:main2}).

Next, for the case when $n=p$ for some prime $p$, we establish a correspondence between $k$-gons modulo $p$ and elements of $\mathbb{F}_p[x]$ which has the useful property that the monodromy group of the $k$-gon is completely determined by the greatest common divisor of the polynomial and $x^k-1$(Proposition \ref{proposition:main3}). This correspondence allows us to complete the classification of all monodromy groups of polygonal billiard surfaces for {\it $k$-gons} when $n=p$ is prime and $p>k$ (Theorem \ref{theorem:monodromy_groups}).

Finally, in Section \ref{section:composite_results}, we provide some preliminary results for composite $n$ which are sufficient to give a complete classification for triangles and an analogue of the main result in \cite{MMSV21} for quadrilaterals.

Throughout this paper, we will reference many well known algebraic and number theoretic results. See any introductory graduate abstract algebra book, such as \cite{A09}, or number theory book, such as \cite{L14}, for a reference.

\section{Background}\label{section:background}

\subsection{The Rational Billiard Surface Construction}

A rational billiards surface is constructed by gluing together copies of a polygon that result from consecutive reflections across the sides. This name is motivated by the task of examining the paths of balls that bounce around the interior of a billiard table. When a ball hits a side of the table, the resulting bounce is instead represented by gluing a reflection of the table across that side and continuing the billiard path in the reflected copy in the same direction. This way, the path of a ball is represented by a single geodesic on a flat surface instead of a jagged path that may cross back on itself. Equipped with this intuition, a rational billiards surface is constructed from all of the reflections required to account for every possible path a ball could take.

More formally, a rational billiard surface can be constructed from a $k$-gon $P$ whose angles are rational multiples of $\pi$, in the following way. Label the sides of $P$ as $e_0,\ldots e_{k-1}$, in consecutive counterclockwise order around $P$. Label the angles of $P$ as $\theta_i=\dfrac{a_i\pi}{n}$, where $\theta_i$ is the internal angle formed by sides $e_{i}$ and $e_{i+1}$ and $n\in \N$ is the least common denominator for the various $\dfrac{a_i}{n}$. Let $\Gamma$ be the dihedral group generated by the reflections $r_0,\ldots,r_{k-1}$ across lines through the origin parallel to the corresponding sides of $P$. This group consists of $2n$ elements \cite{AI88}, consisting of $n$ Euclidean rotations and $n$ Euclidean reflections. The rotation subgroup of $\Gamma$ is generated by rotation by the angle $\dfrac{2\pi}{n}$. Hence we may label the rotations using the notation $\rho_m$ for rotation by an angle of $\dfrac{2m\pi}{n}$. Let $\mathcal{P}=\{\gamma( P):\gamma\in\Gamma\}$. For each $\gamma (P)\in \mathcal{P}$ and each $r_i$, we glue together $\gamma(P)$ and $\gamma r_i (P)$ along their copies of $e_i$. The resulting object is called a \emph{translation surface}, since it is a Riemann surface whose change-of-coordinate maps are translations. See \cite{vorobets1996planar} and \cite{ZK}  for a detailed description of the rational billiards construction.

\subsection{Defining a Monodromy Group on the Surface}
Next, we draw a graph on this surface by placing a vertex in the center of each copy of $P$ and labeling it with the corresponding element of $\Gamma$. We draw an edge between two vertices $\alpha$ and $\beta$ precisely when $\alpha=\beta r_i$ for some $i$. This graph is the Cayley graph for $\Gamma$ with generating set $r_0,\ldots,r_{k-1}$. See \cite{SMG19} for a more in-depth exposition on this graph.

Since the generating set consists of reflections, this graph is bipartite, where one partite vertex set is the set of Euclidean rotations in $\Gamma$ and the other partite vertex set is the set of Euclidean reflections in $\Gamma$.

We will define a labeling scheme, introduced in \cite{MMSV21}, for the edges of the graph in following way. Take an arbitrary edge of the graph; one endpoint will be a vertex labeled $\rho_m$ and the other endpoint will be $\rho_m r_i$, for integers $m$ and $i$. We label this edge with the ordered pair $(m,i)\in C_n\times C_k$ where $C_n\times C_k$ is viewed as a set and not a group. (Here, $C_n$ represents the cyclic group of order $n$.) In fact this defines a bijection between the edge set of the graph and $C_n\times C_k$.

We can define a \emph{dessin d'enfant} on the surface by assigning a color to each of the partite sets (say, black for rotation and white for reflection) and by defining a cyclic ordering of the edges (oriented counterclockwise) around each vertex \cite{LZ04}. The ordering around a black vertex $\rho_m$ is $(m,0),(m,1),\ldots,(m,k-1)$, and the ordering around a white vertex $\rho_m r_i$ is $(m,i),(m-a_{i-1},i-1),(m-a_i-a_{i-1},i-2),\ldots,(m+a_{i+1},i+1)$. See Figure \ref{fig:basic_rotations}.

The ordering around a black vertex is apparent from our labeling scheme. To justify the ordering around a white vertex, observe that $r_{i+1}r_i=\rho_{-a_i}$ and $\rho_a\rho_b=\rho_{a+b}$, by basic facts about the composition of Euclidean reflections and rotations \cite{SMG19}. 
See Figure \ref{fig:111TriangleExample} for an example of this construction for the equilateral triangle, and see \cite{MMSV21} for further exposition on triangular billiards surfaces.

The \emph{monodromy group} of this dessin is a group $\langle \sigma_0,\sigma_1\rangle$ of permutations of the edges generated by two permutations $\sigma_0$ and $\sigma_1$. We define $\sigma_0$ to be the permutation that takes each edge to the next edge in the cyclic ordering about its black vertex. Similarly, we define $\sigma_1$ to be the permutation that takes each edge to the next edge in the cyclic ordering about its white vertex.

Therefore, we have that for any edge $(m,i)$,

\begin{equation}
\sigma_0[(m,i)]=(m,i+1) \label{sigma_0_def}\end{equation}

and

\begin{equation}
\sigma_1[(m,i)]=(m-a_{i-1},i-1).
\label{sigma_1_def}
\end{equation}


\begin{figure}
    \centering
    \includegraphics[scale=0.8]{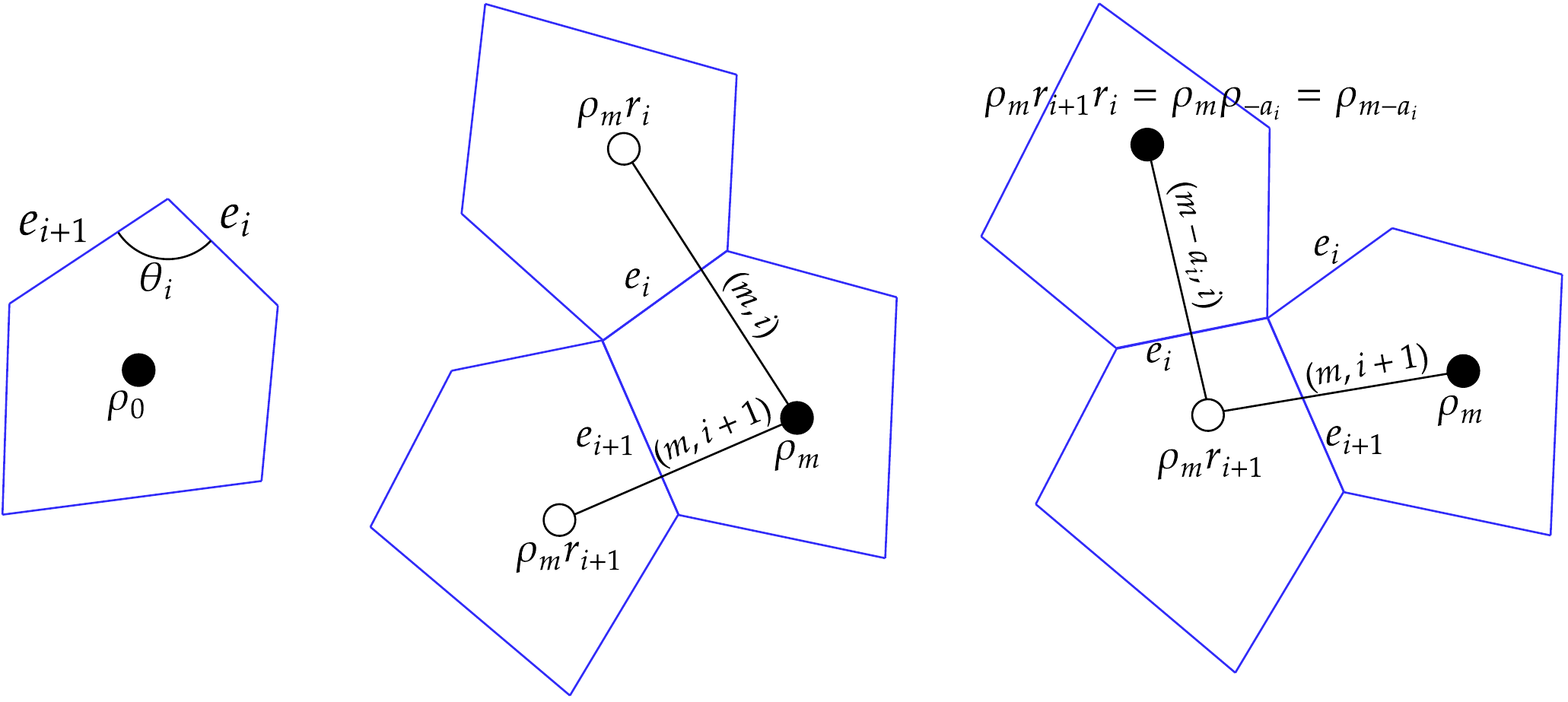}
    \caption{}
    \label{fig:basic_rotations}
\end{figure}



\begin{figure}
    \centering
    \includegraphics[scale=0.8]{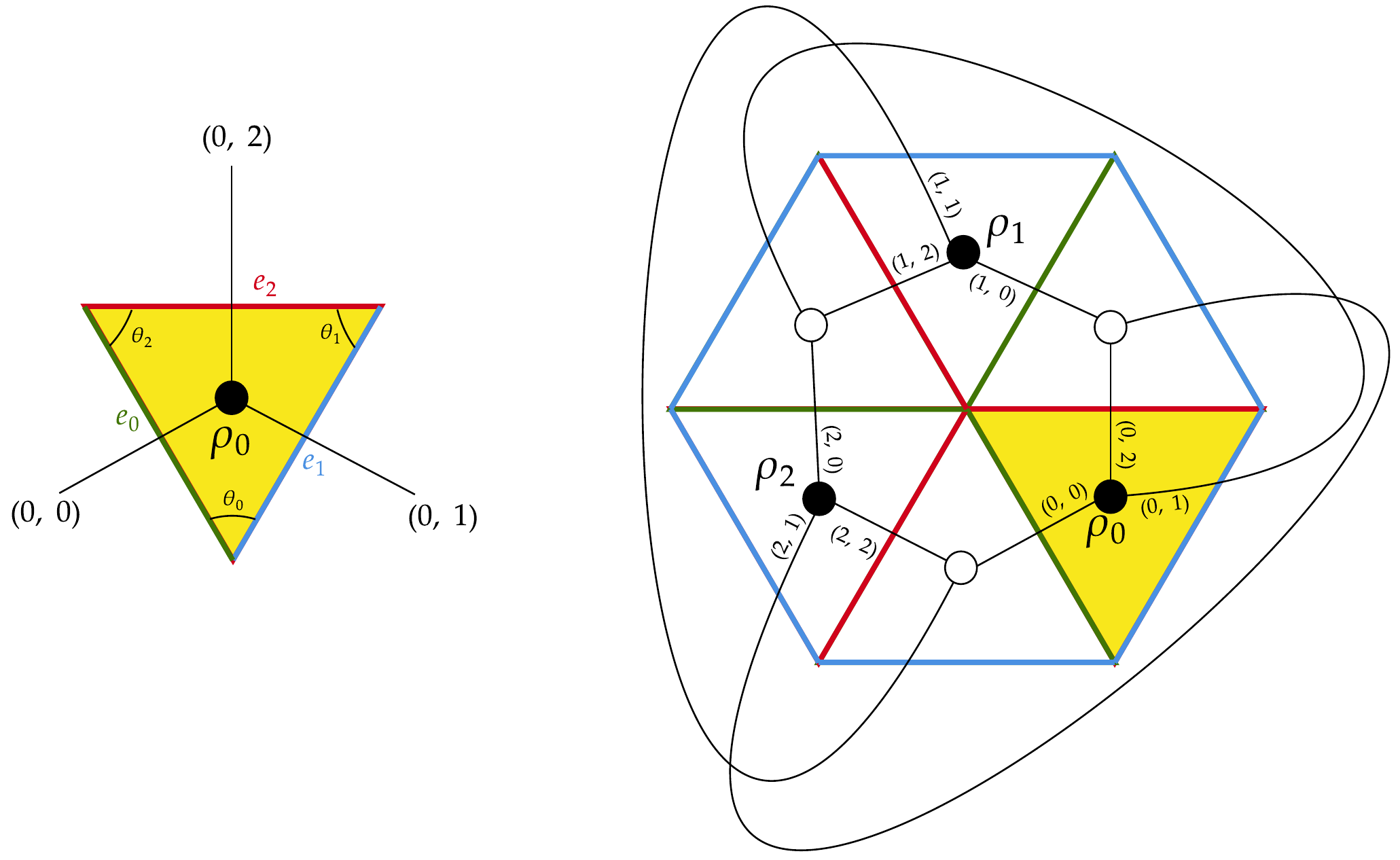}
    \caption{}
    \label{fig:111TriangleExample}
\end{figure}

\subsection{Representing Polygons by $k$-tuples}
Let $P$ be a rational polygon with consecutive internal angles $\dfrac{a_i\pi}{n}$, where $a_0+\ldots+a_{k-1}=(k-2)n$ and $\gcd(a_0,\ldots,a_{k-1},n)=1$. We shall use the notation $[a_0,a_1,\ldots,a_{k-1}]$ to represent $P$. Although this notation does not uniquely define $P$ up to geometric similarity when $k>3$, it does uniquely define the dessin drawn on $P$ up to graph isomorphism. This motivates the following definition.

\begin{definition}
If $k,n\in\N$ with $k\ge 3$, then an ordered $k$-tuple of positive integers $[a_0,\dots,a_{k-1}]$ {\it represents a geometric polygon, or geometric $k$-gon, modulo $n$} when $a_0+\dots+a_{k-1}=(k-2)n$ and $a_i<2n,\ a_i\ne n$ for all $i$ and $\gcd(a_0,\dots,a_{k-1},n)=1$. Throughout this paper, we will regularly use the term {\it $k$-gon} to refer to a geometric $k$-gon.
\end{definition}

\begin{remark}
The angles of a $k$-gon represented by $[a_0,\dots,a_{k-1}]$ modulo $n$ are $\frac{a_0}{n}\pi,\dots,\frac{a_{k-1}}{n}\pi$.
\end{remark}

It is not obvious that every $k$-tuple $[a_0,\dots,a_{k-1}]$ that represents a polygon modulo $n$ corresponds to a polygon in the plane with zero crossings. However, it is in fact true.

\begin{proposition}[Theorem 1, \cite{EFKT}]\label{proposition:polygon}
Suppose that $\theta_0,\dots,\theta_{k-1}$ is a sequence of angles (in radians) in the set $(0,\pi)\cup(\pi,2\pi)$. If $\theta_0+\dots+\theta_{k-1}=(k-2)\pi$, then there exists a polygon in the plane with no crossings with angles $\theta_0,\dots,\theta_{k-1}$ in that sequence.
\end{proposition}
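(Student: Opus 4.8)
The plan is to induct on $k$, building the polygon by repeatedly cutting off a triangular ear, and to keep careful track of the one genuinely hard requirement: that the polygon have no crossings. It helps to separate this from the easy part. Each prescribed turning angle $\pi-\theta_i$ lies in $(-\pi,\pi)\setminus\{0\}$, and the turning angles sum to $\sum_i(\pi-\theta_i)=k\pi-(k-2)\pi=2\pi$; a standard winding argument then shows the $k$ prescribed edge directions cannot all lie in a closed half-plane, so one can choose positive edge lengths making the edge vectors sum to zero. This already yields a \emph{closed} polygon with the prescribed angles. The real content of the theorem is upgrading ``closed'' to ``simple,'' and that is where the work lies.

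For the base case $k=3$, three positive angles summing to $\pi$ are each less than $\pi$, so all lie in $(0,\pi)$, and a triangle with prescribed angles exists and is automatically simple. For the inductive step ($k\ge 4$) I would first produce a \emph{cuttable ear}: a convex vertex $i$ (with $\theta_i<\pi$) whose consecutive triple satisfies $\theta_{i-1}+\theta_i+\theta_{i+1}>\pi$. Such an $i$ always exists. If the sequence contains a reflex angle $\theta_j>\pi$, then since the cyclic sequence contains both convex and reflex entries it must change type at some adjacent pair, yielding a convex vertex with a reflex neighbor; that neighbor alone exceeds $\pi$, so the triple sum does too. If instead every $\theta_j$ lies in $(0,\pi)$, then summing $\theta_{i-1}+\theta_i+\theta_{i+1}$ over all $i$ counts each angle three times and gives $3(k-2)\pi$, an average of $3(k-2)\pi/k>\pi$ for $k\ge 4$, so some consecutive triple exceeds $\pi$ and its middle vertex is convex.

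Given such an $i$, the inequality $\theta_{i-1}+\theta_{i+1}>\pi-\theta_i$ lets me write $\pi-\theta_i=\alpha+\beta$ with $0<\alpha<\theta_{i-1}$ and $0<\beta<\theta_{i+1}$, chosen generically so that $\theta_{i-1}-\alpha\ne\pi$ and $\theta_{i+1}-\beta\ne\pi$. Replacing the three angles $\theta_{i-1},\theta_i,\theta_{i+1}$ by the two angles $\theta_{i-1}-\alpha$ and $\theta_{i+1}-\beta$ leaves a sequence of $k-1$ angles, each still in $(0,\pi)\cup(\pi,2\pi)$ and summing to $((k-1)-2)\pi$; by the induction hypothesis it is realized by a simple $(k-1)$-gon $Q$ whose new edge $f$ carries interior angles $\theta_{i-1}-\alpha$ and $\theta_{i+1}-\beta$ at its endpoints. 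Erecting on the outer side of $f$ the triangle with base angles $\alpha,\beta$ and apex angle $\pi-\alpha-\beta=\theta_i$ restores all three original angles in their correct cyclic positions and yields a closed $k$-gon with the prescribed angle sequence.

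The main obstacle is verifying that this reattachment creates no crossings. The erected ear occupies a region of diameter $O(|f|)$ about the edge $f$, so it is disjoint from the rest of $Q$ as soon as $|f|$ is small relative to the geometry of $Q$; the difficulty is that the length of $f$ is not free once the angles of $Q$ are fixed. I would resolve this by realizing $Q$ with $|f|$ as short as desired: the admissible edge-length vectors for a fixed angle (hence fixed edge-direction) sequence form a convex set cut out of the positive orthant by the two closure equations, and I would argue that within this set the length of the designated edge $f$ can be driven toward $0$ while the remaining edges converge to a simple limiting configuration, so that simplicity---an open condition---persists for all sufficiently short $f$. Making this degeneration precise as $|f|\to 0$ is the technical heart of the argument; once it is in hand, the perturbed polygon is simple, carries the prescribed angles in the prescribed order, and the induction closes.
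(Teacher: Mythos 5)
First, a structural point: the paper does not prove this proposition at all---it is imported verbatim as Theorem~1 of \cite{EFKT}---so there is no internal proof to compare against, and your proposal must stand on its own. Its combinatorial skeleton does stand up: the base case is correct; the existence of a cuttable ear is correct in both cases (the averaging argument $3(k-2)\pi/k>\pi$ for $k\ge 4$ when all angles are convex, and the convex-vertex-with-reflex-neighbor argument otherwise); and the bookkeeping for the split $\pi-\theta_i=\alpha+\beta$ is sound, including the slightly delicate $k=4$ case with a reflex vertex, where feasibility of the interval for $\beta$ follows exactly from the ear condition $\theta_{i-1}+\theta_i+\theta_{i+1}>\pi$.

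However, the step you yourself call ``the technical heart'' is a genuine gap, and the sketch offered for it does not work as stated. The induction hypothesis supplies only \emph{one} simple $(k-1)$-gon $Q$ with the reduced angle sequence and gives no control whatsoever on $|f|$; to get $|f|$ small you appeal to convexity of the set of edge-length vectors satisfying the closure equations, but that convex set parametrizes \emph{closed} polygons, and the subset of \emph{simple} realizations is not convex---determining which positive length vectors with prescribed edge directions yield a simple polygon is essentially the theorem being proved, so the claim that this subset contains points with arbitrarily small $f$-coordinate is circular in difficulty. Worse, the proposed limiting argument fails structurally: as $|f|\to 0$ the two endpoints of $f$ merge into a vertex whose interior angle is $(\theta_{i-1}-\alpha)+(\theta_{i+1}-\beta)-\pi=\theta_{i-1}+\theta_i+\theta_{i+1}-2\pi$, and your ear condition only guarantees this quantity exceeds $-\pi$, not that it lies in $(0,\pi)\cup(\pi,2\pi)$; when it is nonpositive there is no valid ``simple limiting configuration,'' so the openness-of-simplicity argument has nothing to converge to. To close the gap you would need to strengthen the induction hypothesis---for instance, prove that every admissible angle sequence admits simple realizations in which a designated edge is shorter than any prescribed fraction of its distance to the rest of the boundary---or replace the shrink-$f$ strategy with an explicit construction carrying quantitative control of edge lengths, which is in effect how the cited proof in \cite{EFKT} handles precisely this difficulty.
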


Using this same convention, if the polygon $P$ is represented by $[a_0,a_1,\ldots,a_{k-1}]$ then we will use the notation $X(a_0,\ldots,a_{k-1})$ for the rational billiards surface arising from $P$ and $D(a_0,\ldots,a_{k-1})$ to represent the dessin drawn on $X(a_0,\ldots,a_{k-1})$. Finally, we will use $G(a_0,\ldots,a_{k-1})$ to represent the monodromy group of that dessin.

\section{Semidirect Product Structure of the Monodromy Group}\label{section:monodromy_groups} 

The goal of this section is to describe the monodromy groups as semidirect products of abelian groups.

\begin{theorem}\label{theorem:main1}
Let $[a_0,\ldots, a_{k-1}]$ represent a $k$-gon modulo $n$. Let $G(a_0,\ldots,a_{k-1}) = \langle \sigma_0, \sigma_1 \rangle$ be the monodromy group of the dessin $D(a_0,\dots,a_{k-1})$ drawn on the rational polygonal billiards surface $X(a_0,\dots,a_{k-1})$. Setting $N=\langle \sigma_0^x\sigma_1^x:\ 0< x<k \rangle$ and $H=\langle \sigma_0 \rangle$, we have $G(a_0,\ldots,a_{k-1}) = N \rtimes H$. 
\end{theorem}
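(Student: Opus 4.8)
The plan is to realize every element in sight as an explicit permutation of the edge set $C_n\times C_k$ (composing right-to-left) and then verify the three conditions defining an internal semidirect product: $N\cap H=\{e\}$, $N\trianglelefteq G$, and $G=NH$. From \eqref{sigma_0_def} one gets $\sigma_0^x[(m,i)]=(m,i+x)$, so $H=\langle\sigma_0\rangle$ is cyclic of order $k$. Iterating \eqref{sigma_1_def} gives $\sigma_1^x[(m,i)]=\left(m-\sum_{j=1}^{x}a_{i-j},\ i-x\right)$, and composing the two yields the key formula
\[
\sigma_0^x\sigma_1^x[(m,i)] = \left(m-\sum_{j=1}^{x}a_{i-j},\ i\right).
\]
Thus each generator of $N$ fixes the second coordinate and translates the first coordinate (mod $n$) by an amount depending only on $i$. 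Writing $c^{(x)}\in(\Z/n\Z)^k$ for the vector with $i$-th entry $\sum_{j=1}^x a_{i-j}$ and $T_v$ for the fiberwise translation $(m,i)\mapsto(m-v_i,i)$, we have $\sigma_0^x\sigma_1^x=T_{c^{(x)}}$ and $T_vT_w=T_{v+w}$, so every element of $N$ has the form $T_v$; in particular $N$ is abelian and fixes all second coordinates.

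Two of the three conditions are then immediate. Any $\sigma_0^x$ with $0<x<k$ moves the second coordinate, whereas every element of $N$ fixes it, so $N\cap H=\{e\}$. For generation, $\sigma_0\in H$ and $\sigma_0\sigma_1\in N$ force $\sigma_1=\sigma_0^{-1}(\sigma_0\sigma_1)\in\langle N,H\rangle$, whence $\langle N,H\rangle=\langle\sigma_0,\sigma_1\rangle=G$.

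The substantive step is normality, and I expect it to be the main obstacle. Since $N$ normalizes itself and $\langle N,\sigma_0\rangle=\langle N,H\rangle=G$, it suffices to show $\sigma_0$ normalizes $N$, i.e. $\sigma_0(\sigma_0^x\sigma_1^x)\sigma_0^{-1}\in N$ for each $0<x<k$ (the equality $\sigma_0 N\sigma_0^{-1}=N$ then follows from finiteness of $N$). A direct conjugation shows this element equals $T_w$, where $w$ is the cyclic shift of $c^{(x)}$, with $i$-th entry $\sum_{j=1}^{x}a_{i-1-j}$. The bookkeeping identity that makes everything work is $w=c^{(x+1)}-c^{(1)}$, equivalently
\[
\sigma_0\left(\sigma_0^x\sigma_1^x\right)\sigma_0^{-1} = \left(\sigma_0^{x+1}\sigma_1^{x+1}\right)\left(\sigma_0\sigma_1\right)^{-1}.
\]
For $1\le x\le k-2$ both factors on the right are defining generators of $N$, so the conjugate lies in $N$. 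The one case that genuinely uses the geometry is $x=k-1$: here I would invoke the angle-sum relation $a_0+\dots+a_{k-1}=(k-2)n\equiv 0\pmod n$, which gives $c^{(k)}=0$, i.e. $\sigma_0^k\sigma_1^k=e$, so the conjugate collapses to $(\sigma_0\sigma_1)^{-1}\in N$. Verifying the shift identity and handling this boundary index $x=k-1$ is the delicate part; everything else is routine.

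With normality in hand, $NH$ is a subgroup containing both $N$ and $H$, hence equals $\langle N,H\rangle=G$; together with $N\cap H=\{e\}$ and $N\trianglelefteq G$ this is precisely the assertion $G=N\rtimes H$. The resulting factorization $|G|=|N|\cdot k$ with $H\cong C_k$ recovers the $N\rtimes C_k$ description promised in the introduction.
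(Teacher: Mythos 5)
Your proposal is correct and follows essentially the same route as the paper: the same coordinate formula for $\sigma_0^x\sigma_1^x$, the same intersection and generation arguments, and the same key conjugation identity $\sigma_0(\sigma_0^x\sigma_1^x)\sigma_0^{-1}=(\sigma_0^{x+1}\sigma_1^{x+1})(\sigma_0\sigma_1)^{-1}$ that appears in the paper's normality lemma. Your only streamlining is the normalizer-subgroup trick that lets you skip verifying conjugation by $\sigma_1$ (which the paper checks separately), together with an explicit treatment of the boundary case $x=k-1$ via $\sigma_0^k\sigma_1^k=\mathrm{id}$ from the angle-sum relation, a point the paper handles implicitly since both $\sigma_0$ and $\sigma_1$ have order $k$.
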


\begin{lemma}\label{elements_commute}
The permutations $\sigma_0^x\sigma_1^x$ and $\sigma_0^y\sigma_1^y$ commute.
\end{lemma}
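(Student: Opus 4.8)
The plan is to compute the action of $\sigma_0^x\sigma_1^x$ explicitly on an arbitrary edge $(m,i)$ and to observe that it takes a particularly simple form: it fixes the $C_k$-coordinate $i$ and acts on the $C_n$-coordinate as a translation whose amount depends only on $i$. Once this is established, commutativity is essentially automatic, since the translations of the abelian group $C_n$ commute with one another.

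First I would iterate the defining relations \eqref{sigma_0_def} and \eqref{sigma_1_def}. From $\sigma_0[(m,i)]=(m,i+1)$ one gets $\sigma_0^x[(m,i)]=(m,i+x)$ immediately. Iterating $\sigma_1[(m,i)]=(m-a_{i-1},i-1)$ and proving the resulting formula by induction on $x$ yields
\[
\sigma_1^x[(m,i)]=\Big(m-\sum_{j=1}^{x}a_{i-j},\ i-x\Big),
\]
where the second coordinate is read modulo $k$ and the first modulo $n$. Composing, the shifts in the second coordinate cancel: $\sigma_1^x$ lowers $i$ to $i-x$, and $\sigma_0^x$ restores it to $i$. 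Writing $T_x(i)=\sum_{j=1}^{x}a_{i-j}\in C_n$, I obtain
\[
(\sigma_0^x\sigma_1^x)[(m,i)]=\big(m-T_x(i),\ i\big).
\]
Thus $\sigma_0^x\sigma_1^x$ preserves each fiber $\{(m,i):m\in C_n\}$ and acts on it as translation by $-T_x(i)$, an amount depending only on the fixed index $i$.

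With this description in hand the conclusion follows directly. Both $\sigma_0^x\sigma_1^x$ and $\sigma_0^y\sigma_1^y$ fix the second coordinate, so applying them in either order leaves $i$ unchanged and merely adds the two translations on the first coordinate:
\[
(\sigma_0^x\sigma_1^x)(\sigma_0^y\sigma_1^y)[(m,i)]=\big(m-T_y(i)-T_x(i),\ i\big)=(\sigma_0^y\sigma_1^y)(\sigma_0^x\sigma_1^x)[(m,i)],
\]
since addition in $C_n$ is commutative. I do not expect a genuine obstacle here; the only point requiring care is verifying that the translation amount $T_x(i)$ is a function of $i$ alone and carries no dependence on $m$, as it is precisely this feature---that each $\sigma_0^x\sigma_1^x$ is a fiberwise translation over the $C_k$-coordinate---that forces the two maps to commute.
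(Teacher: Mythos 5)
Your proposal is correct and follows essentially the same route as the paper: both compute $\sigma_0^x\sigma_1^x[(m,i)]=\left(m-\sum_{j=i-x}^{i-1}a_j,\ i\right)$ (your $T_x(i)=\sum_{j=1}^{x}a_{i-j}$ is this same sum reindexed) and conclude that composing in either order adds the two $i$-dependent translations on the first coordinate, which commute in $C_n$. Your framing of the maps as fiberwise translations over the fixed $C_k$-coordinate is a slightly more conceptual packaging of the paper's explicit two-line computation, but the underlying argument is identical.
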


\begin{proof}

Let $(m,i)\in C_n\times C_k$ be an arbitrary edge of the dessin. 

From \eqref{sigma_0_def} and \eqref{sigma_1_def} we have that \begin{align}\sigma_0^x\sigma_1^x[(m,i)]=\sigma_0^x\left[\left(m-\sum_{j=1}^{x}a_{i-j},i-x\right)\right]=\left(m-\sum_{j=i-x}^{i-1}a_{j},i\right).\label{eq:commute}\end{align}

Therefore, $$\sigma_0^y\sigma_1^y\sigma_0^x\sigma_1^x[(m,i)]=\sigma_0^y\sigma_1^y\left[\left(m-\sum_{j=i-x}^{i-1}a_j,i\right)\right]=\left(m-\left(\sum_{j=i-x}^{i-1}a_{j}+\sum_{j=i-y}^{i-1}a_j\right),i\right).$$

Finally, $$\sigma_0^x\sigma_1^x\sigma_0^y\sigma_1^y[(m,i)]=\sigma_0^x\sigma_1^x\left[\left(m-\sum_{j=i-y}^{i-1}a_j,i\right)\right]=\left(m-\left(\sum_{j=i-x}^{i-1}a_{j}+\sum_{j=i-y}^{i-1}a_j\right),i\right),$$ establishing commutativity.
\end{proof}

\begin{definition}\label{def:N}
Let $N=\langle \sigma_0^x\sigma_1^x:\ 0<x<k \rangle.$ Observe that $\sigma_1^y\sigma_0^y=(\sigma_0^{k-y}\sigma_1^{k-y})^{-1}$.
\end{definition}

\begin{lemma}\label{lemma:fixed_subgroup}
The subgroup $N$ is precisely the subgroup of $G(a_0,\ldots,a_{k-1})$ that fixes the second component of the coordinates $(m,i)$.
\end{lemma}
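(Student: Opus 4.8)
The plan is to realize the ``second component'' as a quotient and identify $N$ as its kernel. Since the formulas \eqref{sigma_0_def} and \eqref{sigma_1_def} change the second coordinate by $i\mapsto i+1$ and $i\mapsto i-1$ independently of $m$, the action of $G(a_0,\dots,a_{k-1})$ on the second coordinate factors through a homomorphism $\pi\colon G\to C_k$ onto the cyclic group of shifts, with $\pi(\sigma_0)$ the shift by $+1$. By construction $\ker\pi$ is exactly the subgroup fixing the second component, so the goal becomes $N=\ker\pi$. One inclusion is immediate: by \eqref{eq:commute} each generator $\sigma_0^x\sigma_1^x$ of $N$ fixes the second coordinate, so $N\subseteq\ker\pi$.

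For the reverse inclusion I would set $H=\langle\sigma_0\rangle$ and first record the easy structural facts: $\sigma_0^k=\mathrm{id}$ (as $i\in C_k$) while $\pi|_H$ is onto, so $H\cong C_k$ and $H\cap\ker\pi=\{\mathrm{id}\}$. The main work is then to prove $G=NH$. Since $\sigma_0\in H$ and $\sigma_1=\sigma_0^{-1}(\sigma_0\sigma_1)$ with $\sigma_0\sigma_1\in N$, it suffices to show $NH$ is a subgroup, for which it is enough that $H$ normalizes $N$.

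To see that $H$ normalizes $N$, I would identify $N$ with a submodule of the functions $C_k\to C_n$ under pointwise addition. Writing $c_x(i)=\sum_{j=i-x}^{i-1}a_j$, equation \eqref{eq:commute} says $\sigma_0^x\sigma_1^x$ acts by $(m,i)\mapsto(m-c_x(i),i)$; since these generators commute (Lemma \ref{elements_commute}), $N$ is abelian and is carried isomorphically onto the additive span of $c_1,\dots,c_{k-1}$. A direct computation shows that conjugation by $\sigma_0$ sends $c_x$ to its cyclic shift $i\mapsto c_x(i-1)$, so $H$ normalizes $N$ precisely when this span is invariant under the shift operator $S$, $(Sf)(i)=f(i-1)$. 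Using $c_x=(1+S+\cdots+S^{x-1})c_1$ gives $S^{x-1}c_1=c_x-c_{x-1}\in N$ for $1\le x\le k-1$, so the span is the cyclic $(\ZnZ)[S]/(S^k-1)$-module generated by $c_1$.

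The one subtlety, and the step I expect to be the crux, is closing the orbit at the top: I must still show $S^{k-1}c_1\in N$. This is where the polygon condition enters, since $c_k(i)=\sum_{j=0}^{k-1}a_j=(k-2)n\equiv 0\pmod n$, i.e.\ $\sum_{l=0}^{k-1}S^l c_1=0$, which forces $S^{k-1}c_1=-\sum_{l=0}^{k-2}S^l c_1\in N$. Thus $N$ is $S$-invariant, $H$ normalizes $N$, and $NH=G$. Finally, given any $g\in\ker\pi$ I write $g=\nu h$ with $\nu\in N\subseteq\ker\pi$ and $h\in H$; then $h=\nu^{-1}g\in H\cap\ker\pi=\{\mathrm{id}\}$, so $g=\nu\in N$. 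This gives $\ker\pi\subseteq N$ and hence $N=\ker\pi$, as desired.
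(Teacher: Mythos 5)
Your proof is correct, but it takes a genuinely different route from the paper's. The paper proves the hard inclusion ($\ker\pi\subseteq N$ in your notation) head-on, by induction on the number of pairs in a word $g=(\sigma_0^{x_1}\sigma_1^{y_1})\cdots(\sigma_0^{x_{n+1}}\sigma_1^{y_{n+1}})$ representing an element that fixes the second coordinate: it strips off the outer factors via $g'=(\sigma_0^{x_1}\sigma_1^{x_1})^{-1}\,g\,(\sigma_0^{y_{n+1}}\sigma_1^{y_{n+1}})^{-1}$ and invokes the inductive hypothesis, using only \eqref{sigma_0_def}, \eqref{sigma_1_def} and the fact that $\sigma_0$ and $\sigma_1$ have order $k$. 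You instead assemble the full semidirect-product data first --- $H=\langle\sigma_0\rangle$ normalizes $N$, $NH=G$, and $H\cap\ker\pi=\{\mathrm{id}\}$ --- and read the lemma off from the factorization $g=\nu h$. In the paper those three facts are Lemmas \ref{lemma:normal}, \ref{lemma:intersection}, \ref{lemma:product} and Theorem \ref{theorem:main1}, proved \emph{after} (and independently of) this lemma, so your order of deduction creates no circularity; you have in effect proved Theorem \ref{theorem:main1} en route. The real divergence is your normality argument: identifying $N$ with the cyclic $(\ZnZ)[S]/(S^k-1)$-module generated by $c_1$, with $c_x=(1+S+\cdots+S^{x-1})c_1$ and the closing relation $\sum_{l=0}^{k-1}S^l c_1=0$ coming from $a_0+\cdots+a_{k-1}=(k-2)n\equiv 0\pmod n$, is more conceptual than the paper's two bare identities in Lemma \ref{lemma:normal} (which hide the same angle-sum relation inside $\sigma_1^k=\mathrm{id}$), and it anticipates the circulant description of $N$ in Lemma \ref{lemma:N} together with the cyclic-shift action of $H$ noted in the remark following it. Two small points you pass over, both harmless: invariance of the span under $S$ yields invariance under $S^{-1}$ (hence normalization by all of $H$) only because $S^k=\mathrm{id}$, which holds here; and your identification of $N$ with translations $(m,i)\mapsto(m-f(i),i)$ is legitimate because the generators of $N$ are translations and the translations form an abelian subgroup of the permutations of the edge set on which the identification is injective by faithfulness --- you do not need, and correctly do not assume, that every element of $\ker\pi$ is a priori a translation, since that is exactly what your final factorization argument delivers.
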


\begin{proof}
Let $N'$ be the collection of elements in $G(a_0,\ldots,a_{k-1})$ that fix the second component of $(m,i)$. Clearly the identity is an element of $N'$. If $g,h\in N'$ then  $g h$ and $g^{-1}$ also fix the second component of $(m,i)$. Hence, $N'$ is a subgroup of $G(a_0,\ldots,a_{k-1})$ and the formula for $\sigma_0^x\sigma_1^x$ in \eqref{eq:commute} shows that $\sigma_0^x\sigma_1^x\in N'$. Since $\sigma_0^x\sigma_1^x$ generate $N$ as $x$ ranges from $1$ to $k-1$, we see that $N\le N'$.

Every element in $G(a_0,\ldots,a_{k-1})$ (and thus in $N'$) can be written as a product $g=(\sigma_0^{x_1}\sigma_1^{y_1})\dots (\sigma_0^{x_n}\sigma_1^{y_n})$ of $n$ pairs of the form $\sigma_0^{x_i}\sigma_1^{y_i}$ where $x_i,y_i\in \Z$. We will show that $N'\le N$ by induction on $n$. If $g=\sigma_0^{x_1}\sigma_1^{y_1}\dots \sigma_0^{x_n}\sigma_1^{y_n}\in N'$, we know that $\sum{x_i}\equiv \sum{y_i}\mod k$ by \eqref{sigma_0_def} and \eqref{sigma_1_def}.

Base Case: $n=1$
In this case, we see that $x_1\equiv y_1\mod k$. Since the orders of $\sigma_0$ and $\sigma_1$ are both $k$, we can assume $x_1=y_1$. Furthermore, we can also assume that $0\le x_1<k$. Hence, $g\in N$.

Induction Step:
Suppose our theorem is true for $n\ge 1$ and consider $n+1$. That is, suppose $g=\sigma_0^{x_1}\sigma_1^{y_1}\dots \sigma_0^{x_{n+1}}\sigma_1^{y_{n+1}}\in N'$. Consider
$$
g'=(\sigma_0^{x_1}\sigma_1^{x_1})^{-1} g (\sigma_0^{y_{n+1}}\sigma_1^{y_{n+1}})^{-1}=\sigma_1^{y_1-x_1}\sigma_0^{x_2}\sigma_1^{y_2}\dots\sigma_0^{x_n}{\sigma_1^{y_n} \sigma_0^{x_{n+1}-y_{n+1}}}
$$
Since $g\in N'$ then $g'\in N'$ and $(g')^{-1}\in N'$. Let $z_1=y_{n+1}-x_{n-1}, z_2=-x_n,\dots,z_n=-x_2$ and $w_1=-y_m,\dots,w_{n-1}=-y_2,w_n=x_1-y_1$. Observe that $(g')^{-1}=\sigma_0^{z_1}\sigma_1^{w_1}\dots \sigma_0^{z_n}\sigma_1^{w_n}$. Thus by the induction hypothesis, $(g')^{-1}\in N$. Hence, $g'\in N$ and $g\in N$. By induction, we have proven the desired result.

\end{proof}

\begin{lemma}\label{lemma:normal}
$N$ is a normal subgroup of $G(a_0,\ldots,a_{k-1})$.
\end{lemma}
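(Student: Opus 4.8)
The plan is to show that $N$ is normal by verifying that conjugation by each generator of $G(a_0,\ldots,a_{k-1})$ sends $N$ into itself. Since $G = \langle \sigma_0, \sigma_1\rangle$, it suffices to check that $\sigma_0 N \sigma_0^{-1} \subseteq N$ and $\sigma_1 N \sigma_1^{-1} \subseteq N$; because $N$ is a subgroup and conjugation by $\sigma_0^{-1}, \sigma_1^{-1}$ gives the reverse inclusions, these two containments yield full normality. Moreover, since $N$ is generated by the elements $\sigma_0^x\sigma_1^x$ for $0 < x < k$, I only need to understand the conjugates $\sigma_0(\sigma_0^x\sigma_1^x)\sigma_0^{-1}$ and $\sigma_1(\sigma_0^x\sigma_1^x)\sigma_1^{-1}$ and show each lands in $N$.

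The cleanest route, however, is to invoke Lemma~\ref{lemma:fixed_subgroup}, which characterizes $N$ intrinsically as the set of all elements of $G(a_0,\ldots,a_{k-1})$ that fix the second coordinate $i$ of every edge $(m,i)$. Normality then follows from a general principle: this "second-coordinate-fixing" property is preserved under conjugation. Concretely, I would take an arbitrary $g \in N$ and an arbitrary $h \in G$, and show $hgh^{-1}$ fixes the second coordinate. The key observation is that every element $h \in G$ acts on the second coordinate $i$ by a translation $i \mapsto i + c$ for some constant $c \in C_k$ depending only on $h$ (this is immediate from \eqref{sigma_0_def} and \eqref{sigma_1_def}, since $\sigma_0$ shifts $i \mapsto i+1$ and $\sigma_1$ shifts $i \mapsto i-1$, so any word in them shifts $i$ by a fixed integer). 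Thus the map $G \to C_k$ recording the net shift on the second coordinate is a group homomorphism, and $N$ is precisely its kernel, which is automatically normal.

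Spelling this out: for $(m,i)$ an arbitrary edge, write $h^{-1}[(m,i)] = (m', i-c)$ where $c$ is the net second-coordinate shift of $h$. Applying $g \in N$ fixes the second coordinate by Lemma~\ref{lemma:fixed_subgroup}, giving $g h^{-1}[(m,i)] = (m'', i - c)$ for some $m''$. Finally applying $h$ shifts the second coordinate back by $c$, yielding $hgh^{-1}[(m,i)] = (m''', i)$. Hence $hgh^{-1}$ fixes every second coordinate, so $hgh^{-1} \in N$ again by Lemma~\ref{lemma:fixed_subgroup}, establishing $hNh^{-1} \subseteq N$ for all $h \in G$.

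I do not anticipate a serious obstacle here, as most of the work has been front-loaded into Lemma~\ref{lemma:fixed_subgroup}. The only point requiring mild care is verifying that the second-coordinate shift is genuinely additive across products, i.e.\ that it is well defined and constant on all edges for a fixed group element; this follows because both $\sigma_0$ and $\sigma_1$ act on the second coordinate by a fixed translation independent of $m$, so their composites do as well. With that in hand, the kernel characterization makes normality immediate. An alternative, more computational proof would directly conjugate the generators $\sigma_0^x\sigma_1^x$ and re-express the result using Definition~\ref{def:N} and the commutativity from Lemma~\ref{elements_commute}, but the kernel argument is shorter and conceptually cleaner.
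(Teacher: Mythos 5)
Your proof is correct, but it takes a genuinely different route from the paper. The paper proves normality by direct computation on generators: it conjugates each $\sigma_0^x\sigma_1^x$ by $\sigma_0$ and by $\sigma_1$ and rewrites the result as a product of elements of $N$, using the identity $\sigma_1^y\sigma_0^y=(\sigma_0^{k-y}\sigma_1^{k-y})^{-1}$ noted in Definition~\ref{def:N} (e.g.\ $\sigma_1(\sigma_0^x\sigma_1^x)\sigma_1^{-1}=(\sigma_0^{k-1}\sigma_1^{k-1})^{-1}(\sigma_0^{x-1}\sigma_1^{x-1})$). You instead exploit Lemma~\ref{lemma:fixed_subgroup}: since each of $\sigma_0,\sigma_1$ shifts the second coordinate by a constant independent of $m$, the net shift defines a homomorphism $G(a_0,\ldots,a_{k-1})\to C_k$, and $N$ is exactly its kernel, hence normal; your care in checking that the shift is well defined on group elements (not just on words) is the right point to flag. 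The trade-off: the paper's argument is self-contained --- it never invokes Lemma~\ref{lemma:fixed_subgroup}, whose proof is the most delicate induction in that section --- whereas your argument front-loads everything onto that lemma and in return gets normality for free, while also explaining conceptually \emph{why} $N$ is normal: it is the kernel of the projection realizing $G/N\cong C_k$, which dovetails with the semidirect product structure $N\rtimes C_k$ established in Theorem~\ref{theorem:main1}. Your closing remark correctly identifies the paper's computational route as the alternative.
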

\begin{proof}
Since $N=\langle \sigma_0^x\sigma_1^x:\ 0<x<k \rangle$ and $G(a_0,\ldots,a_{k-1}) = \langle \sigma_0, \sigma_1 \rangle$, proving $N\lhd G(a_0,\ldots,a_{k-1})$ is equivalent to proving the following statements:
\begin{enumerate}
\item $\sigma_1(\sigma_0^x\sigma_1^x)\sigma_1^{-1}\in N$
\item $\sigma_0(\sigma_0^x\sigma_1^x)\sigma_0^{-1}\in N$
\end{enumerate}

To prove 1, observe that 
$$\sigma_1(\sigma_0^x\sigma_1^x)\sigma_1^{-1}=(\sigma_1\sigma_0)(\sigma_0^{x-1}\sigma_1^{x-1})=(\sigma_0^{k-1}\sigma_1^{k-1})^{-1}(\sigma_0^{x-1}\sigma_1^{x-1})\in N.$$

To prove 2, observe that $$\sigma_0(\sigma_0^x\sigma_1^x)\sigma_0^{-1}=(\sigma_0^{x+1}\sigma_1^{x+1})(\sigma_1^{k-1}\sigma_0^{k-1})=(\sigma_0^{x+1}\sigma_1^{x+1})(\sigma_0\sigma_1)^{-1}\in N.$$ 
\end{proof}

\begin{lemma}\label{lemma:intersection}
$N\cap H = \{id\}$
\end{lemma}

\begin{proof}
Recall that $N=\langle \sigma_0^x\sigma_1^x:\ 0<x<k \rangle$ and $H=\langle \sigma_0 \rangle$. Suppose the intersection of these groups is not trivial. Then there is an element in $N$ that is equal to $\sigma_0^\ell$ for some $0< \ell<k$. Observe that $\sigma_0^\ell(m,i)=(m,i+\ell)$ and thus does not fix the second component of the edge labels. However, $N$ is generated by elements that fix the second component of the edge labels \eqref{eq:commute}. Hence, we have reached a contradiction.
\end{proof}

\begin{lemma}\label{lemma:product}
$NH = G(a_0,\ldots,a_{k-1})$
\end{lemma}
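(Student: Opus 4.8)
The plan is to exploit the normality of $N$, which was established in Lemma \ref{lemma:normal}. Once we know $N \lhd G(a_0,\ldots,a_{k-1})$, a standard fact from group theory guarantees that the product set $NH$ is itself a subgroup of $G(a_0,\ldots,a_{k-1})$ (and that $NH = HN$ as sets). Since $G(a_0,\ldots,a_{k-1}) = \langle \sigma_0,\sigma_1\rangle$, it will then suffice to show that both generators $\sigma_0$ and $\sigma_1$ lie in $NH$; this forces $G(a_0,\ldots,a_{k-1}) = \langle \sigma_0,\sigma_1\rangle \le NH$, and since $NH \le G(a_0,\ldots,a_{k-1})$ trivially, we conclude $NH = G(a_0,\ldots,a_{k-1})$.

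The containment $\sigma_0 \in NH$ is immediate, since $\sigma_0 = \mathrm{id}\cdot\sigma_0$ with $\mathrm{id}\in N$ and $\sigma_0 \in H$. For $\sigma_1$, the key observation is that the $x=1$ generator of $N$ is exactly $\sigma_0\sigma_1$, so $\sigma_0\sigma_1 \in N$ by Definition \ref{def:N}. Writing
$$
\sigma_1 = \sigma_0^{-1}(\sigma_0\sigma_1),
$$
we see that $\sigma_1$ is a product of $\sigma_0^{-1}\in H$ and $\sigma_0\sigma_1 \in N$, hence $\sigma_1 \in HN$. Because $N$ is normal we have $HN = NH$, so $\sigma_1 \in NH$ as required. (If one prefers to exhibit the factorization in the order $NH$ directly, one may instead write $\sigma_1 = \bigl(\sigma_0^{-1}(\sigma_0\sigma_1)\sigma_0\bigr)\sigma_0^{-1}$, where the parenthesized term lies in $N$ by normality and $\sigma_0^{-1}\in H$.)

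I do not expect any serious obstacle here: the entire argument rides on the normality of $N$, which has already been proved, together with the elementary remark that $\sigma_0\sigma_1$ is one of the defining generators of $N$. The only point requiring a moment's care is to make sure the generator $\sigma_1$ — rather than $\sigma_0$ — is recovered from an $N$-element times an $H$-element, and here the identity $\sigma_1 = \sigma_0^{-1}(\sigma_0\sigma_1)$ does all the work. Combined with Lemmas \ref{lemma:normal} and \ref{lemma:intersection}, this last lemma completes the verification of the internal semidirect product hypotheses, so that $G(a_0,\ldots,a_{k-1}) = N \rtimes H$ as claimed in Theorem \ref{theorem:main1}.
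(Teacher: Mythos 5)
Your proof is correct and follows essentially the same route as the paper: normality of $N$ (Lemma \ref{lemma:normal}) makes $NH$ a subgroup, and then one checks that both generators $\sigma_0$ and $\sigma_1$ lie in $NH$. The only cosmetic difference is the factorization of $\sigma_1$ — the paper writes $\sigma_1=(\sigma_0^{k-1}\sigma_1^{k-1})^{-1}\sigma_0^{-1}$, which silently uses $\sigma_0^k=\sigma_1^k=\mathrm{id}$, whereas your identity $\sigma_1=\sigma_0^{-1}(\sigma_0\sigma_1)$ together with $HN=NH$ is, if anything, slightly more elementary.
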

\begin{proof}
Recall that $N=\langle \sigma_0^x\sigma_1^x:\ 0<x<k \rangle$, $H = \langle \sigma_0 \rangle$, and $G(a_0,\ldots,a_{k-1}) = \langle \sigma_0, \sigma_1 \rangle$. Since $N\lhd G(a_0,\ldots,a_{k-1})$ and $H\le G(a_0,\ldots,a_{k-1})$, we know that $NH\le G(a_0,\ldots,a_{k-1})$. Observe that $\sigma_0\in NH$ and $\sigma_1=(\sigma_0^{k-1}\sigma_1^{k-1})^{-1}\sigma_0^{-1}\in NH$. Because $NH$ contains the generators of $G(a_0,\ldots,a_{k-1})$, we conclude that $NH=G(a_0,\ldots,a_{k-1})$.
\end{proof}

Now we proceed with the proof of Theorem 1.

\begin{proof}[\textbf{Proof of Theorem 1}]
The group $G(a_0,\ldots,a_{k-1})$ is a semi direct product of subgroups $N$ and $H$ if and only if the three conditions are true:
\begin{center}
\begin{enumerate}[I.]
    \item $N \lhd G(a_0,\ldots,a_{k-1})$
    \item $N \cap H = \{id\}$
    \item $NH = G(a_0,\ldots,a_{k-1})$
\end{enumerate}
\end{center}
Conditions $I$, $II$, and $III$ are satisfied by Lemmas \ref{lemma:normal}, \ref{lemma:intersection}, and \ref{lemma:product} respectively. Therefore, $G(a_0,\ldots,a_{k-1})$ is a semidirect product of subgroups $N$ and $H$.
\end{proof}

\begin{remark}
The action of $H$ on $N$ in the semidirect product is via conjugation by elements of $H$.
\end{remark}

\section{Computing the Structure of $N$}\label{section:structure_of_N}
In this section, we prove several properties about the subgroup $N\triangleleft G(a_0,\ldots,a_{k-1})$, introduced in Definition \ref{def:N}, to provide more precise information about the structure of $N$ and, by extension, $G(a_0,\ldots,a_{k-1})$.

Let $S=\{\sigma_1^{-j}(\sigma_0^{-1}\sigma_1^{-1})\sigma_1^{j}:0\leq j <k\}$. We first show that one can generate $N$ using the elements of $S$.

\begin{lemma}\label{lemma:new_N}
The subgroup $N$ is generated by $S$.
\end{lemma}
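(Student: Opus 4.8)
The plan is to prove the two inclusions $\langle S\rangle\subseteq N$ and $N\subseteq\langle S\rangle$ separately, relying on the normality of $N$ (Lemma \ref{lemma:normal}) for the first and on a single short algebraic identity for the second. Throughout I will use that $\sigma_0$ and $\sigma_1$ both have order $k$, so that $\sigma_0^{-1}=\sigma_0^{k-1}$ and $\sigma_1^{-1}=\sigma_1^{k-1}$.

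For $\langle S\rangle\subseteq N$, the key observation is that the element being conjugated in the definition of $S$ already lies in $N$: since $\sigma_0^{-1}\sigma_1^{-1}=\sigma_0^{k-1}\sigma_1^{k-1}$, it is exactly the generator $\sigma_0^x\sigma_1^x$ of $N$ for $x=k-1$. Because $N\lhd G(a_0,\ldots,a_{k-1})$ by Lemma \ref{lemma:normal}, every conjugate $S_j=\sigma_1^{-j}(\sigma_0^{-1}\sigma_1^{-1})\sigma_1^{j}$ by the element $\sigma_1^{j}\in G(a_0,\ldots,a_{k-1})$ again lies in $N$. Hence $S\subseteq N$, and therefore $\langle S\rangle\subseteq N$. (Alternatively, one can compute directly that $S_j$ sends $(m,i)\mapsto(m+a_{i-j},i)$, so it fixes the second coordinate and lies in $N$ by Lemma \ref{lemma:fixed_subgroup}; the normality argument is shorter.)

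For the reverse inclusion, I would write $g_x=\sigma_0^x\sigma_1^x$ for $1\le x<k$; these elements generate $N$ by Definition \ref{def:N}. The crux is the single identity
$$g_{x-1}^{-1}g_x=\sigma_1^{1-x}\sigma_0^{1-x}\sigma_0^{x}\sigma_1^{x}=\sigma_1^{1-x}\sigma_0\sigma_1^{x}=\sigma_1^{-x}(\sigma_1\sigma_0)\sigma_1^{x}=S_x^{-1},$$
where the last equality uses $(\sigma_0^{-1}\sigma_1^{-1})^{-1}=\sigma_1\sigma_0$. This rearranges to the recursion $g_x=g_{x-1}S_x^{-1}$. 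Since $g_0=\mathrm{id}$, induction on $x$ gives $g_x=S_1^{-1}S_2^{-1}\cdots S_x^{-1}\in\langle S\rangle$ for every $x$ with $1\le x<k$. As the $g_x$ generate $N$, we conclude $N\subseteq\langle S\rangle$, and combining the two inclusions yields $N=\langle S\rangle$.

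I expect the only real work to be verifying the displayed chain of equalities, which is pure bookkeeping in the group once the reductions $\sigma_0^{-1}=\sigma_0^{k-1}$ and $\sigma_1^{-1}=\sigma_1^{k-1}$ are in hand; there is no conceptual obstacle. A minor point to state carefully is that the indices $1\le x\le k-1$ used for $S_x$ stay within the range $0\le j<k$ appearing in the definition of $S$, so that each factor $S_x^{-1}$ genuinely comes from an element of $S$.
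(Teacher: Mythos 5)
Your proof is correct and takes essentially the same route as the paper's: your crux identity $g_{x-1}^{-1}g_x=S_x^{-1}$ is exactly the paper's observation $\sigma_1^{-j}(\sigma_0^{-1}\sigma_1^{-1})\sigma_1^{j}=(\sigma_0^{j}\sigma_1^{j})^{-1}\sigma_0^{j-1}\sigma_1^{j-1}$ rearranged, and the same induction (your telescoping product $g_x=S_1^{-1}\cdots S_x^{-1}$) yields $N\subseteq\langle S\rangle$. The only deviation is cosmetic: for $\langle S\rangle\subseteq N$ the paper checks directly, via \eqref{sigma_0_def}, \eqref{sigma_1_def}, and Lemma \ref{lemma:fixed_subgroup}, that each element of $S$ fixes the second coordinate (the alternative you mention parenthetically), whereas you instead note $\sigma_0^{-1}\sigma_1^{-1}=\sigma_0^{k-1}\sigma_1^{k-1}\in N$ and invoke normality, which is equally valid and non-circular since Lemma \ref{lemma:normal} is established before this lemma.
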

\begin{proof}
Recall that $N=\langle \sigma_0^x\sigma_1^x:\ 0<x<k \rangle$. Let $S=\{\sigma_1^{-j}(\sigma_0^{-1}\sigma_1^{-1})\sigma_1^{j}:0\leq j <k\}$. We claim $\langle S\rangle=N.$ Using \eqref{sigma_0_def} and \eqref{sigma_1_def}, we see that $\sigma_1^{-j}(\sigma_0^{-1}\sigma_1^{-1})\sigma_1^{j}$ fixes the second component of the coordinates $(m,i)$ and is thus an element of $N$ by Lemma \ref{lemma:fixed_subgroup}. Hence, $\langle S \rangle\le N$.

We will prove that $\sigma_0^j\sigma_1^j\in \langle S\rangle$ using induction. Observe that $\sigma_1^{-1}(\sigma_0^{-1}\sigma_1^{-1})\sigma_1^1=(\sigma_0\sigma_1)^{-1}$. Hence, $\sigma_0\sigma_1\in\langle S\rangle$.

Suppose $\sigma_0^{j-1}\sigma_1^{j-1}\in \langle S\rangle$. Observe that  $\sigma_1^{-j}(\sigma_0^{-1}\sigma_1^{-1})\sigma_1^j=(\sigma_0^j\sigma_1^j)^{-1}\sigma_0^{j-1}\sigma_1^{j-1}$ which implies $\sigma_0^j\sigma_1^j\in\langle S\rangle$. Thus, $\sigma_0^j\sigma_1^j\in \langle S\rangle$ for all $j>0$ and hence $N\le \langle S\rangle$.
\end{proof}

As we observed in Lemma \ref{lemma:fixed_subgroup}, the subgroup $N$ is precisely the subgroup of $G(a_0,\ldots,a_{k-1})$ which fixes the second component of the edge $(m,i)$. Hence, we may view any element $g\in N$ as a column vector $\begin{bmatrix}x_0\\ \vdots \\x_{k-1}\end{bmatrix}\in (\mathbb{Z}/n\mathbb{Z})^k$, where $g(m,i)=(m+x_i,i)$. It follows from equations \eqref{sigma_0_def} and \eqref{sigma_1_def} that $\sigma_1^{-j}(\sigma_0^{-1}\sigma_1^{-1})\sigma_1^{j}(m,i)=(m+a_{i-j},i)$. 
Therefore the set $S=\{\sigma_1^{-j}(\sigma_0^{-1}\sigma_1^{-1})\sigma_1^{j}:0\leq j <k\}$ can be identified with the columns of the  matrix 

\begin{equation}\label{equation:C}
C=\begin{bmatrix}
a_0&a_{k-1}&\dots& a_2 &a_1\\
a_1& a_0&a_{k-1}& &a_2\\
\vdots& a_1&a_0&\ddots & \vdots\\
a_{k-2}&  & \ddots &\ddots&a_{k-1}\\
a_{k-1}&a_{k-2}&\dots&a_1&a_0
\end{bmatrix}
\end{equation}
in $M_k(\ZnZ)$ where $M_k(\ZnZ)$ is the set of $k\times k$ matrices with entries in $\ZnZ$. We make this statement more formal in the following lemma.

\begin{lemma}\label{lemma:N}
The subgroup $N$ is isomorphic to the span of the columns of $C$.  
\end{lemma}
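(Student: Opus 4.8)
The plan is to construct an explicit isomorphism $\phi$ from $N$ onto the column span of $C$ and to verify its properties in turn. As noted in the discussion preceding the lemma, every element $g\in N$ fixes the second coordinate, so its action can be recorded as $g(m,i)=(m+x_i,i)$ for some tuple $(x_0,\dots,x_{k-1})\in(\ZnZ)^k$. The first step is to confirm that this tuple is well defined, i.e.\ that the shift in the first coordinate depends only on $i$ and not on $m$. This holds because both generators act on $(m,i)$ by adding to $m$ a quantity depending only on $i$ (by \eqref{sigma_0_def} and \eqref{sigma_1_def}), and a short check shows that this form is preserved under composition and inversion; hence every element of $G(a_0,\dots,a_{k-1})$, and in particular of $N$, has the shape $(m,i)\mapsto(m+x_i,i)$. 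I would then define $\phi(g)=[x_0,\dots,x_{k-1}]^{T}$.

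Next I would verify that $\phi$ is a group homomorphism into the additive group $(\ZnZ)^k$. For $g,h\in N$ with associated tuples $(x_i)$ and $(y_i)$, applying $h$ first gives $gh(m,i)=g(m+y_i,i)=(m+x_i+y_i,i)$, so $\phi(gh)=\phi(g)+\phi(h)$. Injectivity is immediate: if $\phi(g)=0$ then $g$ fixes every edge $(m,i)$, whence $g=\mathrm{id}$. Therefore $\phi$ is an isomorphism of $N$ onto its image $\phi(N)$.

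Finally I would identify $\phi(N)$ with the span of the columns of $C$. By Lemma \ref{lemma:new_N} we have $N=\langle S\rangle$, and the computation $\sigma_1^{-j}(\sigma_0^{-1}\sigma_1^{-1})\sigma_1^{j}(m,i)=(m+a_{i-j},i)$ recorded just before the lemma shows that $\phi$ sends the generator indexed by $j$ to the vector whose $i$-th entry is $a_{i-j}$, which is exactly the $j$-th column of the circulant matrix $C$ in \eqref{equation:C}. Since $\phi$ is a homomorphism, $\phi(N)$ is the subgroup of $(\ZnZ)^k$ generated by these columns; and because scalar multiplication in the $\ZnZ$-module $(\ZnZ)^k$ is merely repeated addition, this subgroup coincides with the $\ZnZ$-span of the columns, i.e.\ the column span of $C$. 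Combining the three steps yields $N\cong\phi(N)=$ span of the columns of $C$.

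The argument is largely bookkeeping, so I do not anticipate a serious obstacle. The one point deserving genuine care is the well-definedness in the first step: one must establish that the property ``the first-coordinate shift depends only on $i$'' is closed under the group operations, so that the coordinate tuple attached to $g$ is unambiguous. Once this normal form for elements of $G(a_0,\dots,a_{k-1})$ is in hand, the homomorphism property, injectivity, and the image computation are all routine.
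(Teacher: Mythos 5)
Your proof is correct and takes essentially the same route as the paper's: you define the same homomorphism $\varphi(g)=\vecx$ recording the first-coordinate shifts, obtain injectivity from the faithfulness of the monodromy action, and use Lemma \ref{lemma:new_N} to identify the images of the generators in $S$ with the columns of $C$ in \eqref{equation:C}. One small phrasing slip: an arbitrary element of $G(a_0,\ldots,a_{k-1})$ has the shape $(m,i)\mapsto(m+x_i,\,i+c)$ rather than $(m,i)\mapsto(m+x_i,\,i)$; the latter holds only for elements of $N$ (by Lemma \ref{lemma:fixed_subgroup}, which you correctly invoke), and this does not affect your argument.
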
 

\begin{proof}
From \eqref{sigma_0_def}, \eqref{sigma_1_def}, and Lemma \ref{lemma:fixed_subgroup}, we see that an arbitrary element $g\in N$ has the form $g(m,i)=(m+x_i,i)$ where $\vecx=\begin{bmatrix} x_0\\ \vdots \\ x_{k-1}\end{bmatrix}\in(\ZnZ)^k$. We define a homomorphism $\varphi:N\rightarrow (\ZnZ)^k$ via $\varphi(g)=\vecx$. It is easy to check that $\varphi$ is a well-defined map with $\varphi(g_1g_2)=\varphi(g_1)+\varphi(g_2)$. 

It is also easy to see that $\varphi$ is injective. If $\varphi(g)=\begin{bmatrix}0\\ \vdots \\ 0\end{bmatrix}$, then $g$ fixes every edge of the dessin. Hence, $g$ is the identity element since the monodromy group acts faithfully on the edges of the dessin. Thus, we may conclude that $\varphi$ maps $N$ bijectively onto $\varphi(N)$.

Since the elements of the set $S$ generate $N$, we conclude that the set of vectors of the form $\varphi(\sigma_1^{-j}(\sigma_0^{-1}\sigma_1^{-1})\sigma_1^j)=\begin{bmatrix}a_{k-j}\\ \vdots \\ a_{k-j-1}\end{bmatrix}$ where $0\le j<k$ spans $\varphi(N)$. And thus, $N$ is isomorphic to the span of the columns of $C$.

\end{proof}

\begin{remark}
It is worth noting that when viewing $N$ as a set of vectors in $(\ZnZ)^k$, there is a natural group action of $C_k \cong H$ on $N$ which is the cyclic permutation of the vector entries. That is, the homomorphic image of $H$ in Aut$(N)$ is precisely the subgroup of cyclic permutations of vector entries.
\end{remark}

In order to determine the group structure of $N$, we will use row and column operations on the matrix $C$.

\subsection{Smith Normal Form}\label{section:snf}
In previous sections we establish that the monodromy group $G(a_0,\ldots,a_{k-1})$ can be expressed as the semidirect product of $C_k$ and some finite abelian subgroup $N$,
where $N$ has a natural $\mathbb{Z}/n\mathbb{Z}$-module structure. In this section we explore the explicit computation of $N$. This can be done via
the \emph{Smith Normal Form}. See \cite{A09} or \cite{MR09} for a reference.

\begin{definition}\label{def:SNF} 
The {\it Smith Normal Form} of a matrix $A$ with entries from a ring $R$ is a factorization $A=UDV$ where
\begin{itemize}
    \item $D=\begin{bmatrix}d_1& & \\ & \ddots &\\ & & d_k \end{bmatrix}$ is a diagonal matrix
    \item $d_i|d_{i+1}$ for all $i$
    \item $U$ and $V$ are square matrices with determinant $\pm 1$
\end{itemize}
\end{definition}

Consider the $R$-module $M$, which is a submodule of $R^k$, generated by the columns of $A$. Then as a group, $M$ is isomorphic to the direct product 
$$d_1 R\times\cdots\times d_k R.$$ 

The elements $d_1\ldots,d_k$ are called the \emph {elementary divisors} of $M$. In \cite{K49}, Kaplansky defines an \emph{elementary divisor ring} $R$ to be a ring
over which all matrices have a Smith Normal Form. It is well-known (see \cite{K49}) that all PID's are elementary divisor rings. However, not all elementary divisor rings are domains. Indeed, it follows from Corollary 2.3 of \cite{LLS74} that $\mathbb{Z}/n\mathbb{Z}$ is an elementary divisor ring. Hence, we can always compute the group structure of one of our particular monodromy groups by computing the Smith Normal Form of the associated
circulant matrix. 

In practice, algorithms exist for computing the Smith Normal Form of a matrix over $\mathbb{Z}$. Therefore, to compute the Smith Normal Form of a matrix over 
$\mathbb{Z}/n\mathbb{Z}$, it is convenient to compute the Smith Normal Form of an associated matrix over $\mathbb{Z}$ and then apply the standard ring homomorphism to 
reduce modulo $n$. 

Since the matrices $U$ and $V$ in Definition \ref{def:SNF} are invertible over $\Z$, their reductions modulo $n$ (call them $\modU$, $\modV$) are invertible over $\ZnZ$. Therefore, the transformation $x\mapsto \modU^{-1}\cdot x$ is an isomorphism from $(\ZnZ)^k\mapsto(\ZnZ)^k$.

Hence, the $\ZnZ$ submodule generated by $v_0 \modV^{-1},\dots,v_{k-1} \modV^{-1}$ is isomorphic to the $\ZnZ$ submodule generated by the columns of $D$ which are 
$$
\modU^{-1}\overline{v_1}\modV^{-1}=\begin{bmatrix}\overline{d_1}\\0\\ \vdots \\ 0\end{bmatrix},\ \dots,\ \modU^{-1} \overline{v_k}\modV^{-1}=\begin{bmatrix}0\\ \vdots \\ 0 \\\overline{d_k}\end{bmatrix}.
$$
 
Hence, $N$ is isomorphic to $\overline{d}_1\ZnZ\oplus\dots\oplus \overline{d}_k\ZnZ$ where $\overline{d}_i$ is the reduction of $d_i$ modulo $n$. And therefore, 
$$
N\cong \bigoplus_{i=1}^k{\Z\slash \delta_i\Z}
$$
where $\delta_i=\frac{n}{\gcd(d_i,n)}$. We summarize these results with the following theorem, combining the results from Theorem \ref{theorem:main1}.

\begin{theorem}\label{theorem:main2}
Let $C$ be the matrix defined in \eqref{equation:C} and let $d_1,\dots,d_k$ be the elementary divisors of $C$ coming from its Smith Normal Form when viewing $C$ as a matrix over $\Z$. Then $$G(a_0,\ldots,a_{k-1})=\left(\bigoplus_{i=1}^k{C_{\delta_i}}\right)\rtimes C_k$$ where $\delta_i=\frac{n}{\gcd(d_i,n)}$.
\end{theorem}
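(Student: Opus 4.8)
The plan is to treat this as a synthesis of the structural results already proved rather than as a fresh argument: Theorem~\ref{theorem:main1} supplies the decomposition $G(a_0,\ldots,a_{k-1}) = N \rtimes H$ with $H = \langle \sigma_0 \rangle$, so the only work remaining is to identify $H$ with $C_k$ and to replace the factor $N$ by the explicit direct sum $\bigoplus_{i=1}^k C_{\delta_i}$. The identification $H \cong C_k$ is immediate from \eqref{sigma_0_def}: the permutation $\sigma_0$ sends $(m,i)$ to $(m,i+1)$, so it acts on the second coordinate as a $k$-cycle and therefore has order exactly $k$, giving $\langle \sigma_0 \rangle \cong C_k$.

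First I would invoke Lemma~\ref{lemma:N} to identify $N$ with the $\ZnZ$-submodule of $(\ZnZ)^k$ spanned by the columns of the circulant matrix $C$ of \eqref{equation:C}. To read off the structure of this submodule, I would pass to the Smith Normal Form $C = UDV$ computed over $\Z$, where $D = \mathrm{diag}(d_1,\ldots,d_k)$ with $d_i \mid d_{i+1}$ and $\det U, \det V = \pm 1$. Reducing modulo $n$ gives $\overline{C} = \modU\,\overline{D}\,\modV$; since $\det U$ and $\det V$ are units in $\ZnZ$, the matrices $\modU$ and $\modV$ are invertible over $\ZnZ$, so the map $\vecx \mapsto \modU^{-1}\vecx$ is a $\ZnZ$-module automorphism of $(\ZnZ)^k$ carrying the column span of $\overline{C}$ onto the column span of $\overline{D}\,\modV$. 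Since $\modV$ is invertible the latter equals the column span of $\overline{D}$, which is visibly the internal direct sum $\overline{d_1}\,\ZnZ \oplus \cdots \oplus \overline{d_k}\,\ZnZ$.

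The final step is the routine identification of each summand: the cyclic subgroup $\overline{d_i}\,\ZnZ$ generated by the residue of $d_i$ has order $n/\gcd(d_i,n) = \delta_i$, so $\overline{d_i}\,\ZnZ \cong C_{\delta_i}$. Assembling the summands yields $N \cong \bigoplus_{i=1}^k C_{\delta_i}$ as $\ZnZ$-modules, hence as abelian groups. Transporting the $H$-action (conjugation, which by the remark following Lemma~\ref{lemma:N} is the cyclic permutation of vector entries) through this isomorphism together with $H \cong C_k$, the semidirect product $N \rtimes H$ becomes $\left(\bigoplus_{i=1}^k C_{\delta_i}\right) \rtimes C_k$, as claimed.

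I expect no genuine obstacle, since the hard work has been distributed across the preceding lemmas and the theorem is essentially a packaging statement. The one point deserving care is the passage from the integer Smith Normal Form to the module structure over $\ZnZ$: one must confirm that reduction modulo $n$ commutes with the factorization and preserves the invertibility of $U$ and $V$, so that the module generated by the reduced columns is governed precisely by the residues $\overline{d_i}$ rather than by a separate diagonalization carried out over $\ZnZ$. This is exactly the reason for computing over $\Z$ first and then reducing, as laid out in the discussion preceding the statement, and it is what allows the clean formula $\delta_i = n/\gcd(d_i,n)$ to emerge.
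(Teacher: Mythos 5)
Your proposal is correct and takes essentially the same route as the paper: it likewise combines Theorem \ref{theorem:main1} with Lemma \ref{lemma:N}, computes the Smith Normal Form $C=UDV$ over $\Z$, reduces modulo $n$ using the invertibility of $\modU$ and $\modV$ over $\ZnZ$, and identifies $N$ with $\overline{d}_1\ZnZ\oplus\cdots\oplus\overline{d}_k\ZnZ\cong\bigoplus_{i=1}^k C_{\delta_i}$ with $\delta_i=n/\gcd(d_i,n)$. The only difference is cosmetic: you spell out that $\sigma_0$ has order exactly $k$ and that the $H$-action transports through the isomorphism, points the paper leaves implicit.
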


Note that some of the $\delta_i$ may equal $1$, in which case the group $C_{\delta_i}$ is trivial.

\begin{example} Consider the quadrilateral with angles $(\frac{2}{5}\pi, \frac{2}{5}\pi, \frac{2}{5}\pi, \frac{4}{5}\pi)$. This gives the billiards surface $X(2,2,2,4)$ and dessin $D(2, 2, 2, 4)$. To calculate the monodromy group $G(2, 2, 2, 4)$ of the dessin, we compute the smith normal form for the circulant matrix
$$
C=\begin{bmatrix}
2 & 4 & 2 & 2 \\
2 & 2 & 4 & 2 \\
2 & 2 & 2 & 4 \\
4 & 2 & 2 & 2
\end{bmatrix}
=UDV=
\begin{bmatrix}
-11 & -12 & -14 & -3 \\
-11 & -12 & -13 & -3 \\
-7 & -8 & -9 & -2 \\
-11 & -13 & -14 & -3
\end{bmatrix}
\begin{bmatrix}
2 & 0 & 0 & 0 \\
0 & 2 & 0 & 0 \\
0 & 0 & 2 & 0 \\
0 & 0 & 0 & 10
\end{bmatrix}
\begin{bmatrix}
1 & 0 & 0 & 4 \\
-1 & 1 & 0 & 0 \\
0 & -1 & 1 & 0 \\
0 & 0 & -1 & -3
\end{bmatrix}
$$
where $U$ and $V$ are unimodular. This gives us
\begin{align*}
    \delta_1 = \delta_2 = \delta_3 = \frac{5}{\text{gcd}(2, 5)} = 5,\ \ \delta_4 = \frac{5}{\text{gcd}(10, 5)} = 1.
\end{align*}
Then we have
$$
G(2, 2, 2, 4) = \left(C_5\times C_5\times C_5\right) \rtimes C_4.
$$
\end{example}

As a consequence of Theorem 2, one can quickly compute the monodromy groups of any rational triangular billiards surfaces.

\begin{corollary}[Theorem 1, \cite{MMSV21}]
Let $[a_0, a_1, a_2]$ represent a triangle modulo $n$.
Let $G(a_0, a_1, a_2)= \langle \sigma_0, \sigma_1 \rangle$ be the monodromy group of the dessin $D(a_0,a_1,a_2)$ drawn on the triangular billiards surface $X(a_0,a_1,a_2)$. Setting $N=\langle \sigma_0\sigma_1, \sigma_0^2\sigma_1^2 \rangle$ and $H=\langle \sigma_0 \rangle$, we have $G(a_0, a_1, a_2) = N \rtimes H$. Furthermore, if $n = a_0 + a_1 + a_2$ and $\alpha = \gcd(n, a_0a_1 - a_2^2)$, then
$$
G(a_0, a_1, a_2) \cong (C_n \times C_{\frac{n}{\alpha}}) \rtimes C_3.
$$
\end{corollary}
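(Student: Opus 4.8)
The plan is to derive the corollary directly from Theorem \ref{theorem:main2} in the case $k = 3$. The semidirect product structure $G(a_0,a_1,a_2) = N \rtimes H$ with $N = \langle \sigma_0\sigma_1, \sigma_0^2\sigma_1^2\rangle$ and $H = \langle\sigma_0\rangle$ is exactly Theorem \ref{theorem:main1} specialized to $k=3$, so the only new content is the identification $N \cong C_n \times C_{n/\alpha}$. By Theorem \ref{theorem:main2} this reduces to computing the elementary divisors $d_1, d_2, d_3$ of the $3\times 3$ circulant
$$C = \begin{bmatrix} a_0 & a_2 & a_1 \\ a_1 & a_0 & a_2 \\ a_2 & a_1 & a_0 \end{bmatrix}$$
over $\Z$ and then evaluating $\delta_i = n/\gcd(d_i, n)$. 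I will show that $(\delta_1, \delta_2, \delta_3) = (n,\ n/\alpha,\ 1)$, which immediately yields the claimed decomposition.

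First I would handle $d_1$, the gcd of all entries of $C$, namely $\gcd(a_0, a_1, a_2)$. Since $n = a_0 + a_1 + a_2$, any common divisor of $a_0, a_1, a_2$ also divides $n$, hence divides $\gcd(a_0, a_1, a_2, n) = 1$; thus $d_1 = 1$ and $\delta_1 = n$. Because $d_1 = 1$, the second elementary divisor $d_2$ equals the gcd of all $2\times2$ minors of $C$, and a direct expansion shows these minors take only the three values $a_0^2 - a_1 a_2$, $a_1^2 - a_0 a_2$, $a_2^2 - a_0 a_1$ up to sign, so $d_2 = \gcd(a_0^2 - a_1 a_2,\ a_1^2 - a_0 a_2,\ a_2^2 - a_0 a_1)$. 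The crux of the argument is then to reduce these three minors modulo $n$: using $a_0 + a_1 + a_2 \equiv 0 \pmod n$ to substitute $a_2 \equiv -(a_0 + a_1)$, each minor becomes congruent to $a_0^2 + a_0 a_1 + a_1^2 \equiv -(a_0 a_1 - a_2^2) \pmod n$. Consequently $\gcd(d_2, n) = \gcd(a_0^2 - a_1a_2,\ a_1^2 - a_0 a_2,\ a_2^2 - a_0 a_1,\ n) = \gcd(a_0 a_1 - a_2^2,\ n) = \alpha$, giving $\delta_2 = n/\alpha$.

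Finally I would show $\delta_3 = 1$. The key observation is that $Q := a_0^2 + a_1^2 + a_2^2 - a_0 a_1 - a_1 a_2 - a_0 a_2$ is exactly the sum of the three minors above, so $d_2 \mid Q$. Factoring the circulant determinant as $\det C = a_0^3 + a_1^3 + a_2^3 - 3a_0a_1a_2 = (a_0 + a_1 + a_2)Q = nQ$ and using $d_1 d_2 d_3 = |\det C|$, I get $d_3 = nQ/d_2 = n(Q/d_2)$, a multiple of $n$; hence $\gcd(d_3, n) = n$ and $\delta_3 = 1$. Assembling the three factors yields $N \cong C_n \times C_{n/\alpha} \times C_1 = C_n \times C_{n/\alpha}$, and therefore $G(a_0,a_1,a_2) \cong (C_n \times C_{n/\alpha})\rtimes C_3$.

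The main obstacle I anticipate is the degenerate equilateral case $a_0 = a_1 = a_2 = 1$ (so $n = 3$), where all $2\times 2$ minors vanish, $\det C = 0$, and the relation $d_1 d_2 d_3 = |\det C|$ degenerates so that $d_2 = d_3 = 0$. There I would instead verify directly that $\gcd(0, 3) = 3$ forces $\delta_2 = \delta_3 = 1$, matching $\alpha = \gcd(3, 0) = 3$ and recovering $N \cong C_3 \times C_1 = C_3$. Apart from this boundary case, the computation is routine once the modular reduction of the three minors is in hand.
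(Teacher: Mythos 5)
Your proof is correct, but it takes a genuinely different route from the one the paper attaches to this corollary. The paper's proof works entirely modulo $n$: using $\gcd(a_0,a_1,n)=1$ it chooses B\'ezout coefficients $s,t$ with $sa_0+ta_1\equiv 1\pmod n$ and performs explicit row and column operations, exhibiting unimodular matrices that diagonalize $C$ to $\mathrm{diag}(1,\,a_0a_2-a_1^2,\,0)$ over $\Z/n\Z$, then invokes Theorem \ref{theorem:main2}. You instead compute the elementary divisors over $\Z$ from determinantal data --- $d_1=\gcd$ of entries, $d_1d_2=\gcd$ of $2\times 2$ minors, $d_1d_2d_3=|\det C|$ --- which is precisely Lemma \ref{lemma:snf_minors}, a fact the paper only states later (Section 8, citing \cite{MR09}); all of your computations check out: the nine minors do reduce to the three values you list, each is congruent mod $n$ to $-(a_0a_1-a_2^2)$ after substituting $a_2\equiv-(a_0+a_1)$, the identity $\det C=nQ$ with $Q=m_1+m_2+m_3$ (so $d_2\mid Q$) is valid, and your conclusion $n\mid d_3$, hence $\delta_3=1$, follows. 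Interestingly, the paper itself gives a second proof of this formula in its subsection on triangular billiards surfaces that is close to yours --- it also uses Lemma \ref{lemma:snf_minors} --- but it first conjugates $C$ by unimodular matrices to zero out the last row and column (using $a_0+a_1+a_2\equiv 0\bmod n$), which makes $d_3=0$ visible structurally, whereas you deduce the third factor arithmetically from $\det C = nQ$; as a result you must handle $\det C=0$ separately, and your treatment of that case is complete, since $2Q=(a_0-a_1)^2+(a_1-a_2)^2+(a_0-a_2)^2$ vanishes only when $a_0=a_1=a_2$, which together with $\gcd(a_0,a_1,a_2,n)=1$ forces $[1,1,1]$ and $n=3$, exactly the case you check. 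The trade-offs: the paper's mod-$n$ reduction is uniform (no degenerate case, explicit unimodular witnesses) but produces $\alpha$ in the form $\gcd(n,a_0a_2-a_1^2)$, which matches the stated $\gcd(n,a_0a_1-a_2^2)$ only after the same mod-$n$ substitution you perform; your version is shorter, lands directly on the stated form of $\alpha$, and anticipates the paper's own later method, at the cost of importing Lemma \ref{lemma:snf_minors} before the paper introduces it and of the extra degenerate-case check.
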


\begin{proof}
Consider the arbitrary rational triangle with angles $\left(\dfrac{a_0\pi}{n},\dfrac{a_1\pi}{n},\dfrac{a_2\pi}{n}\right)$, where the $a_i$ are positive integers, $a_0+a_1+a_2=n$, and $\gcd(a_0,a_1,a_2,n)=1$. Observe that it follows that $\gcd(a_0,a_1,n)=1$ as well. The normal subgroup $N$ of the associated monodromy group is represented by the column span of $C=\begin{bmatrix}
a_0 & a_1 & a_2 \\
 a_1 & a_2 & a_0 \\
a_2 & a_0 & a_1 
\end{bmatrix}$
 over $\mathbb{Z}/n\mathbb{Z}$.

Since $\gcd(a_0,a_1,n)=1$, there exist integers $s$, $t$, and $u$ such that $sa_0+ta_1+un=1$, and hence $sa_0+ta_1\equiv 1 \mod n$.

 We can perform the following transformations of $C$ by applying row and column operations (working modulo $n$):

 $$C=\begin{bmatrix}
a_0 & a_1 & a_2 \\
 a_1 & a_2 & a_0 \\
a_2 & a_0 & a_1 
\end{bmatrix}\rightarrow
\begin{bmatrix}
a_0 & a_1 & a_2 \\
 a_1 & a_2 & a_0 \\
0 & 0 & 0 
\end{bmatrix}\rightarrow
\begin{bmatrix}
a_0 & a_1 & 0 \\
 a_1 & a_2 & 0 \\
0 & 0 & 0 
\end{bmatrix}
\rightarrow
\begin{bmatrix}
1 & sa_1+ta_2 & 0 \\
 0 & -a_1^2+a_0a_2 & 0 \\
0 & 0 & 0 
\end{bmatrix}
\rightarrow
\begin{bmatrix}
1 & 0 & 0 \\
 0 & -a_1^2+a_0a_2 & 0 \\
0 & 0 & 0 
\end{bmatrix}.
$$

This yields the factorization

$$\begin{bmatrix}
1 & 0 & 0 \\
 0 & -a_1^2+a_0a_2 & 0 \\
0 & 0 & 0 
\end{bmatrix}=\begin{bmatrix}
s & t & 0 \\
 -a_1 & a_0 & 0 \\
0 & 1 & 1 
\end{bmatrix}
\begin{bmatrix}
a_0 & a_1 & a_2 \\
 a_1 & a_2 & a_0 \\
a_2 & a_0 & a_1 
\end{bmatrix}
\begin{bmatrix}
1 & 0 & 1 \\
 -sa_1+ta_2 & 1 & 1 \\
0 & 0 & 1 
\end{bmatrix}.
$$
Or, equivalently,
$$C=\begin{bmatrix}
a_0 & a_1 & a_2 \\
 a_1 & a_2 & a_0 \\
a_2 & a_0 & a_1 
\end{bmatrix}=
\begin{bmatrix}
a_0 & -t & 0 \\
 a_1 & s & 0 \\
-a_1 & -s & 1 
\end{bmatrix}
\begin{bmatrix}
1 & 0 & 0 \\
 0 & -a_1^2+a_0a_2 & 0 \\
0 & 0 & 0 
\end{bmatrix}
\begin{bmatrix}
1 & 0 & -1 \\
sa_1-ta_2 & 1 & -sa_1+ta_2-1 \\
0 & 0 & 1
\end{bmatrix}.
$$

One easily checks that the diagonalizing matrices are unimodular. It then follows from Theorem 2 that the monodromy group of the $(a_0,a_1,a_2)$ triangle is 
$$
  (C_n\times C_{n/\alpha})\rtimes C_3,  
$$
where $\alpha=\gcd(n,a_0a_2-a_1^2)$. 
\end{proof}

\begin{corollary}[Corollary to Theorem \ref{theorem:main2}]\label{corollary:triangle}
The monodromy group of the dessin drawn on the rational billiards surface of the regular $k$-gon is $C_{\frac{k}{\gcd(k,2)}}\times C_k$.

\end{corollary}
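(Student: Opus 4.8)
The plan is to apply Theorem \ref{theorem:main2} directly, after pinning down the tuple that represents the regular $k$-gon and observing that its circulant matrix is as simple as possible. First I would fix the representation. Each interior angle of the regular $k$-gon equals $\frac{(k-2)\pi}{k}$, so in the notation $[a_0,\dots,a_{k-1}]$ modulo $n$ one is initially led to $a_i=k-2$ and $n=k$. Since $\gcd(k-2,k)=\gcd(k,2)$, the primitivity condition $\gcd(a_0,\dots,a_{k-1},n)=1$ forces division by $\gcd(k,2)$. Thus the regular $k$-gon is represented modulo $n=\frac{k}{\gcd(k,2)}$ by the constant tuple $a_i=a:=\frac{k-2}{\gcd(k,2)}$; one checks the defining conditions $a_0+\dots+a_{k-1}=ka=(k-2)n$ and $a<2n$, $a\neq n$, and — crucially for later — that $\gcd(a,n)=1$ (in the even case $n$ and $a$ differ by $1$, and in the odd case $a=k-2$ is odd while $\gcd(k-2,k)=\gcd(k-2,2)$).

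Next I would compute the Smith Normal Form of the circulant matrix $C$ from \eqref{equation:C}. Because every $a_i$ equals $a$, the matrix $C$ has all entries equal to $a$. Over $\mathbb{Z}$ it has rank one, so its first elementary divisor is $d_1=a$ (the gcd of the entries) and all higher elementary divisors vanish, $d_2=\dots=d_k=0$. Concretely, subtracting the first row from every other row and then the first column from every other column reduces $C$ to $\mathrm{diag}(a,0,\dots,0)$ by unimodular operations. Feeding this into Theorem \ref{theorem:main2}, with $\delta_i=\frac{n}{\gcd(d_i,n)}$, gives $\delta_1=\frac{n}{\gcd(a,n)}=n$ (using $\gcd(a,n)=1$) and $\delta_i=\frac{n}{\gcd(0,n)}=\frac{n}{n}=1$ for $i\ge 2$. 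Hence $N\cong C_n=C_{k/\gcd(k,2)}$.

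The final step, and the only genuine subtlety, is upgrading the semidirect product $N\rtimes C_k$ to a direct product. By the Remark following Lemma \ref{lemma:N}, the action of $H\cong C_k$ on $N$, viewed as a subgroup of $(\ZnZ)^k$, is by cyclic permutation of the vector entries. By Lemma \ref{lemma:N}, $N$ is the column span of $C$, and since all columns of $C$ equal $a\mathbf{1}$ (with $\mathbf{1}$ the all-ones vector), every element of $N$ is a constant vector $t\cdot a\mathbf{1}$ with $t\in\ZnZ$. A cyclic permutation fixes any constant vector, so the image of $H$ in $\mathrm{Aut}(N)$ is trivial; the semidirect product therefore collapses to $N\times C_k\cong C_{k/\gcd(k,2)}\times C_k$, as claimed.

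I expect the triviality-of-action observation to be the crux, since the SNF computation and the identification of the tuple are routine once the $\gcd(k,2)$ normalization is handled correctly. In particular, the even/odd dichotomy is entirely absorbed into the single formula $n=\frac{k}{\gcd(k,2)}$, so no separate case analysis should be needed in the write-up beyond verifying $\gcd(a,n)=1$.
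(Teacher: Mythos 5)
Your proposal is correct and follows essentially the same route as the paper's proof: identify the constant tuple $a_i=\frac{k-2}{\gcd(k,2)}$ modulo $n=\frac{k}{\gcd(k,2)}$, read off the Smith Normal Form $\mathrm{diag}(a,0,\dots,0)$ of the all-$a$ circulant matrix, apply Theorem \ref{theorem:main2}, and observe that the cyclic-permutation action of $H$ is trivial because $N$ consists of constant vectors, collapsing the semidirect product to a direct product. The only differences are cosmetic — you absorb the paper's explicit odd/even case split into the single $\gcd(k,2)$ formula and spell out the unimodular row/column reduction and the $\gcd(a,n)=1$ check that the paper leaves implicit.
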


\begin{proof}
The angles of the regular $k$-gon are $\frac{k-2}{k}\pi$. When $k$ is odd, $a_0=\dots=a_{k-1}=k-2$ and $n=k$ since $\gcd(k-2,k)=1$. When $k$ is even, $a_0=\dots=a_{k-1}=\frac{k-2}{2}$ and $n=\frac{k}{2}$ since $\gcd(\frac{k-2}{2},\frac{k}{2})=1$.

Since $a_0=\dots=a_{k-1}$, we see that the matrix $C$ is a $k\times k$ matrix whose entries are all equal to $\frac{k-2}{\gcd(k,2)}$. We deduce that the Smith Normal Form matrix $D=\begin{bmatrix} \frac{k-2}{\gcd(k,2)} & 0 &\dots &0\\ 0 & 0 & \dots& 0\\ \vdots & \vdots& \ddots & 0\\ 0&0&0&0\end{bmatrix}$.

By Theorem \ref{theorem:main2}, it follows that $G(a_0,\ldots,a_{k-1})\cong C_{\frac{k}{\gcd(k,2)}}\rtimes C_k$. Since the subgroup $H\cong C_k$ acts on $N\cong C_{\frac{k}{\gcd(k,2)}}$ via cyclic permutation of the vector entries of $C$, we see that the the semidirect product action of $H$ on $N$ is trivial since all the columns of $C$ are identical. Hence, the semidirect product is actually a direct product.
\end{proof}

In \cite{H86}, Howell addresses the problem of computing the span of a set of vectors over $\Z/n\Z$. Howell considers a matrix $A$ with entries 
in $\Z/n\Z$. He then shows that $A$ can be reduced via elementary row operations to an upper triangular matrix $U$ whose rows have the same span as $A$. This matrix $U$ is known as \emph{Howell Normal Form}. However, Smith Normal Form has the advantage of directly computing the isomorphism class of the vector span as an abelian group, via the ordered list of elementary divisors.

\section{Algebraic Polygons}\label{section:algebraic_polygons}
In this section, we introduce the notion of an \emph{algebraic polygon} and develop the relevant theory with the goal of proving results about actual polygons. We arrive at the concept of an algebraic polygon by relaxing the constraints on polygons modulo $n$ slightly:

\begin{definition}
If $k,n\in\N$ with $k\ge 2$, then an ordered $k$-tuple of nonnegative integers $[a_0,\dots,a_{k-1}]$ {\it represents an algebraic polygon, or $k$-gon, modulo $n$} if $a_0+\dots+a_{k-1}\equiv 0\mod n$ and $\gcd(a_0,\dots,a_{k-1},n)=1$. Observe that $[0,\dots,0]$ is not an algebraic $k$-gon.
\end{definition}

Every geometric polygon modulo $n$ is also an algebraic polygon modulo $n$. We shall define a ``monodromy group'' for any algebraic polygon in a natural way which coincides with the monodromy groups associated to geometric polygons described in Section \ref{section:structure_of_N}. It turns out that it is relatively easy to classify the possible monodromy groups for all algebraic polygons modulo a prime $p$ (we do this in Theorem \ref{theorem:algebraic_monodromy_groups}). The challenge is to determine when, for a given monodromy group $G$ of an algebraic polygon, there exists a \emph{geometric} polygon with a monodromy group isomorphic to $G$. Lemmas \ref{lemma:associate} and \ref{lemma:associate_groups} show that this is always possible if none of the entries in the algebraic polygon are zero modulo $n$. This motivates work in Section \ref{section:proving_monodromy_groups} to produce algebraic polygons with nonzero entries.

\begin{remark}
Note that the definition of an algebraic polygon allows for an algebraic $2$-gon even though no geometric $2$-gons exist. Despite this fact, algebraic $2$-gons can be used to produce geometric $k$-gons via Proposition \ref{proposition:combine_groups}.
\end{remark}

\subsection{Results About Algebraic Polygons}

\begin{definition}
We say that two algebraic polygons, $[a_0,\dots,a_{k-1}]$ and $[b_0,\dots,b_{k-1}]$ modulo $n$ are {\it associates} if there exists $c\in (\ZnZ)^\times$ such that $b_i\equiv c a_i\pmod n$ for all $i$.
\end{definition}

\begin{remark}
Our definition of associate algebraic polygons coincides with the definition of associate triangles from Aurell and Itzykson \cite{AI88}.
\end{remark}

Observe that reflex angles lead to interesting associate polygons. For example, the (algebraic) polygons $[3,5,11,1]$ and $[3,15,1,1]$ are associates modulo $10$.

\begin{proposition}\label{proposition:adjust_angles}
Suppose that $[a_0,\dots,a_{k-1}]$ represents an algebraic polygon modulo $n$. Further suppose that $0<a_i<2n, a_i\ne n$ for all $i$ and $a_0+\dots+a_{k-1}\le(k-2)n$. Then there exists an associate polygon $[b_0,\dots,b_{k-1}]$. Consequently, there exists a polygon in the plane with consecutive angles $\frac{b_0}{n}\pi,\dots,\frac{b_{k-1}}{n}\pi$ and zero crossings.
\end{proposition}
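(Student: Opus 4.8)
The plan is to produce the associate $[b_0,\dots,b_{k-1}]$ by fixing every residue (that is, taking the unit $c=1$ in the definition of associate) and only re-choosing the representative $b_i \equiv a_i \pmod n$ inside the admissible range $(0,2n)\setminus\{n\}$, arranging that the total lands exactly on $(k-2)n$. Once such $b_i$ are in hand, the angles $\theta_i = \frac{b_i}{n}\pi$ all lie in $(0,\pi)\cup(\pi,2\pi)$ and sum to $(k-2)\pi$, so Proposition \ref{proposition:polygon} immediately yields a crossing-free planar polygon with those angles in that order, which is the asserted consequence.

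First I would fix the arithmetic setup. Since each $a_i>0$ and $\sum_i a_i\equiv 0\pmod n$ with $\sum_i a_i\le(k-2)n$, I can write $\sum_i a_i = mn$ for some integer $1\le m\le k-2$. Because $0<a_i<2n$ and $a_i\ne n$, every index is exactly one of two types: \emph{convex} when $0<a_i<n$, or \emph{reflex} when $n<a_i<2n$. Let $j$ be the number of reflex indices. Each reflex angle contributes more than $n$ and each convex angle contributes more than $0$, so $mn=\sum_i a_i > jn$, giving $j\le m-1$.

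The key step is a counting argument. I must raise the total from $mn$ to $(k-2)n$, that is, add $n$ exactly $s:=(k-2)-m\ge 0$ times. Replacing a convex entry $a_i$ by $b_i:=a_i+n$ moves it into $(n,2n)$ (still admissible, still $\equiv a_i\pmod n$) and increases the sum by precisely $n$; hence it suffices to convert $s$ convex indices into reflex ones. The number of convex indices is $k-j$, and since $j\le m-1$ we have $k-j\ge k-m+1$; as $s=k-m-2$ this gives $k-j\ge s+3>s$, so there are always strictly more convex indices available than the $s$ conversions required. I would therefore pick any $s$ convex indices, set $b_i=a_i+n$ on them, and set $b_i=a_i$ on every other index.

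Finally I would check that $[b_0,\dots,b_{k-1}]$ is the desired polygon. By construction $b_i\equiv a_i\pmod n$, so $[b]$ is an associate of $[a]$; each $b_i\in(0,2n)\setminus\{n\}$; and $\sum_i b_i = mn+sn=(k-2)n$. The condition $\gcd(b_0,\dots,b_{k-1},n)=\gcd(a_0,\dots,a_{k-1},n)=1$ persists because it depends only on the residues modulo $n$, so $[b]$ represents a geometric polygon modulo $n$. Applying Proposition \ref{proposition:polygon} to the angles $\frac{b_i}{n}\pi$ then finishes the argument. The only point needing genuine care is the inequality $k-j>s$ guaranteeing enough convex angles to switch; everything else is bookkeeping, and the bound $j\le m-1$ in fact leaves a comfortable surplus, so no sharp estimate is needed.
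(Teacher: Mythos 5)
Your proposal is correct and takes essentially the same route as the paper: write the deficit as $s$ multiples of $n$, add $n$ to $s$ entries lying in $(0,n)$, and then invoke Proposition~\ref{proposition:polygon} on the resulting angles. The only difference is that you justify, via the bound $j\le m-1$ on reflex entries showing at least $s+3$ convex indices exist, the availability of enough small entries, a point the paper's proof asserts (``we can find $a_{i_1},\dots,a_{i_d}$ \dots with $a_{i_j}<n$'') without an explicit count.
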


Observe that Proposition \ref{proposition:adjust_angles} produces a polygon, not simply an algebraic polygon.

\begin{proof}
If $a_0+\dots+a_{k-1}=(k-2)n$, then, letting $a_i=b_i$, $[a_0,\dots,a_{k-1}]=[b_0,\dots,b_{k-1}]$ represents an associate polygon modulo $n$. If $a_0+\dots+a_{k-1}<(k-2)n$, then let $d=\frac{(k-2)n-(a_0+\dots+a_{k-1})}{n}$. We can find $a_{i_1},\dots,a_{i_d}$ with $i_1,\dots,i_d$ distinct such that $a_{i_j}<n$. Add $n$ to each of these $a_{i_j}$ to obtain $b_{i_j}= a_{i_j}+n\equiv a_{i_j}\mod n$. Let $b_{i}= a_{i}$ for all other indices $i\ne i_j$. Thus, $[b_0,\dots,b_{k-1}]$ represents an associate $k$-gon modulo $n$.  By Proposition \ref{proposition:polygon}, there exists a polygon in the plane with consecutive angles $\frac{b_0}{n}\pi,\dots,\frac{b_{k-1}}{n}\pi$ and zero crossings.
\end{proof}

\begin{example}
Consider the algebraic polygon $[1,2,2,7]$ modulo $12$. Using the procedure in Proposition \ref{proposition:adjust_angles}, we produce the associate geometric polygon $[13,2,2,7]$.
\end{example}

We will use the following lemma many times to verify that an algebraic $k$-gon satisfies the hypotheses of Proposition \ref{proposition:adjust_angles}.



\begin{lemma}\label{lemma:associate}
Suppose that $[a_0,\dots,a_{k-1}]$ is an algebraic polygon modulo $n$ with $a_i\not\equiv 0\mod n$ for all $i$. Then, $[a_0,\dots,a_{k-1}]$ has an associate $k$-gon $[b_0,\dots,b_{k-1}]$ that is a polygon modulo $n$. If $n=p$ is a prime and $p\ge k-1$, then there exists an associate {\it convex} $k$-gon $[b_0,\dots,b_{k-1}]$ that is a polygon modulo $p$.
\end{lemma}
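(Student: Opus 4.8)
The plan is to prove the statement in two parts, corresponding to its two sentences. For the first part, I want to show that an associate $[b_0,\dots,b_{k-1}]$ exists which is a genuine geometric $k$-gon modulo $n$ (i.e.\ satisfies $b_0+\dots+b_{k-1}=(k-2)n$, each $0<b_i<2n$ with $b_i\ne n$, and $\gcd(b_0,\dots,b_{k-1},n)=1$). The idea is to reduce to a situation where Proposition \ref{proposition:adjust_angles} applies. Given the hypothesis $a_i\not\equiv 0 \bmod n$ for all $i$, I first replace each $a_i$ by its least positive residue modulo $n$, so that $0<a_i<n$ and hence $a_i\ne n$ and $a_i<2n$ automatically. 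After this normalization the sum $a_0+\dots+a_{k-1}$ is a positive multiple of $n$ lying strictly between $0$ and $kn$, hence at most $(k-1)n$. If the sum already equals $(k-2)n$ or less, Proposition \ref{proposition:adjust_angles} directly produces an associate polygon modulo $n$ and we are done.

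The remaining case is when the normalized residues sum to exactly $(k-1)n$, which is too large for Proposition \ref{proposition:adjust_angles}. Here I would pass to a different associate before normalizing: multiply the whole tuple by the unit $c=-1\equiv n-1 \pmod n$, replacing each $a_i$ by $n-a_i$ (again a least positive residue in $(0,n)$ since $a_i\not\equiv 0$). The new sum is $kn-(k-1)n = n \le (k-2)n$ for $k\ge 3$, so Proposition \ref{proposition:adjust_angles} applies to this associate and yields a geometric $k$-gon. Since being an associate is an equivalence relation (multiplication by a unit is invertible), the resulting polygon is an associate of the original $[a_0,\dots,a_{k-1}]$. Throughout, the $\gcd$ condition is preserved because multiplying by a unit and reducing modulo $n$ does not change $\gcd(a_0,\dots,a_{k-1},n)$.

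For the second part, when $n=p$ is prime with $p\ge k-1$, I want to upgrade to a \emph{convex} associate, meaning all angles are strictly less than $\pi$, i.e.\ $0<b_i<p$ for every $i$. The strategy is to count: I seek a unit $c\in(\mathbb{Z}/p\mathbb{Z})^\times$ so that the least positive residues $c a_i \bmod p$ all lie in $(0,p)$ (which is automatic once none is $\equiv 0$, and none is since $a_i\not\equiv 0$ and $c$ is a unit) \emph{and} sum to exactly $(k-2)p$. If I write $b_i$ for the least residue of $c a_i$, then each $b_i\in\{1,\dots,p-1\}$ and the sum $\sum b_i$ is a multiple of $p$ lying in $\{p, 2p, \dots, (k-1)p\}$. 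The convexity target $(k-2)p$ is one of these achievable values, so the crux is to show some unit $c$ forces the sum to hit $(k-2)p$ rather than a different multiple. I would analyze how $\sum_i (ca_i \bmod p)$ varies with $c$: as $c$ ranges over units, this sum takes various values, and I expect an averaging or pigeonhole argument (or an explicit tracking of how adding up residues behaves) to guarantee the value $(k-2)p$ is attained, using the bound $p\ge k-1$ to ensure enough units are available relative to the number of entries.

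The main obstacle I anticipate is precisely this last counting step for convexity: controlling the sum of least residues $\sum_i (c a_i \bmod p)$ as a function of the unit $c$ and proving the target multiple $(k-2)p$ is hit. Reducing a residue $a_i$ to $p-a_i$ changes the contribution by $p-2a_i$, so flipping signs via $c=-1$ only toggles the total between a value and $kp$ minus that value; getting a general multiple likely requires a more flexible choice of $c$ and a careful case analysis or a parity/averaging argument tied to the hypothesis $p\ge k-1$. I would isolate this as the technical heart and handle the polygon-existence conclusion by invoking Proposition \ref{proposition:polygon} (via Proposition \ref{proposition:adjust_angles}), since convexity combined with the angle-sum condition gives each angle in $(0,\pi)$ and hence a convex planar realization.
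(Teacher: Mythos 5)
Your first half is correct and complete, and it takes a genuinely simpler route than the paper. Where the paper chooses a unit $c$ sending the entry of minimal $\gcd(a_i,n)$ to that minimal gcd $d$ and then bounds the sum of least residues by $d+(k-1)(n-d)<(k-1)n$, you simply reduce to least positive residues (so the sum is one of $n,2n,\dots,(k-1)n$) and, in the single bad case where the sum equals $(k-1)n$, scale by the unit $-1$ to drop the sum to $n\le (k-2)n$ for $k\ge 3$. Both arguments then feed into Proposition \ref{proposition:adjust_angles}, and your bookkeeping (units preserve $\gcd(a_0,\dots,a_{k-1},n)$; associateness is an equivalence) is sound. Your version buys brevity; the paper's version buys nothing extra here, since your case split covers all algebraic polygons with nonzero entries.

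The convexity half, however, is a plan rather than a proof, and the step you defer is exactly the hard one. Writing $s(c)=\frac{1}{p}\sum_i \overline{ca_i}\in\{1,\dots,k-1\}$ for a unit $c$, you must exhibit $c$ with $s(c)=k-2$, and neither mechanism you gesture at suffices: averaging over all units gives $\sum_{c\in\Fp^\times}s(c)=\frac{k(p-1)}{2}$, i.e.\ mean $\frac{k}{2}$, which does not force the value $k-2$ to be attained; and the involution $c\mapsto -c$ only pairs $s(c)$ with $k-s(c)$, which settles $k=3$ (where $\{s(c),s(-c)\}=\{1,2\}$ and $k-2=1$) but nothing beyond. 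The paper closes this gap by writing $\sum_i\overline{a_i}=cp$ with $0<c<k$, choosing $c'$ with $c'c\equiv k-2\pmod p$, and using $p\ge k-1$ to conclude the new multiplier in $\{1,\dots,k-1\}$ must equal $k-2$. But you should be aware that this argument tacitly assumes $s(c')\equiv c'\cdot s(1)\pmod p$, and least-residue reduction is not linear, so that congruence can fail: for $[4,4,4,3]$ modulo $5$ (so $k=4$, $s(1)=3$, and the paper's choice is $c'=4$) the scaled tuple reduces to $(1,1,1,2)$ with $s=1\ne k-2=2$, even though the unit $c^{*}=2$ does achieve $s=2$. So your instinct that this counting step is the technical heart is well founded; as written, neither your sketch nor the paper's one-line justification establishes it, and completing the lemma requires an honest analysis of how $s(c)$ varies over the units rather than the multiplicativity shortcut.
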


\begin{proof}
Let $d_i=\gcd(a_i,n)$. Observe that the subgroup $\langle a_i: 0\le i< k\rangle$ of $\ZnZ$ is isomorphic to $\langle d_i\rangle$. Let $d=\min(d_i:0\le i<k)$. Without loss of generality, assume that $d=gcd(a_0,n)$. The proof is analogous in all other cases.

There exists $c\in (\ZnZ)^\times$ such that $\overline{ca_{0}}=d$ where $\overline{ca_{0}}$ is the reduction of $ca_0$ modulo $n$. Further observe that $\overline{ca_{j}}\le n-d_{j}\le n-d$ for $1\le j<k$ since $ca_{j}\not\equiv 0\mod n$ and $ca_{j}$ is a multiple of $d_{j}$. Hence, 
$$
\overline{ca_{0}}+\overline{ca_{1}}+\dots +\overline{ca_{k-1}}\le d+(n-d)+\dots +(n-d)<(k-1)n.
$$
Observe that 
$$\overline{ca_{0}}+\overline{ca_{1}}+\dots +\overline{ca_{k-1}}\equiv c(a_0+\dots +a_{k-1})\equiv 0\mod n.$$ Therefore, $\overline{ca_{0}}+\overline{ca_{1}}+\dots +\overline{ca_{k-1}}\le (k-2)n.$ Using Proposition \ref{proposition:adjust_angles}, we obtain the desired $[b_0,\dots,b_{k-1}]$.

Now consider the case where $n=p$ is a prime and $p\ge k-1$. Since $a_i\not\equiv 0\mod p$ for all $i$, the reduction of $a_i$ modulo $p$ can be chosen so that $0<\overline{a_i}<p$ for all $i$. Since $[a_0,\dots,a_{k-1}]$ is an algebraic polygon, we know that $a_0+\dots +a_{k-1}\equiv 0 \mod p$. Therefore, $\overline{a_0}+\dots+\overline{a_{k-1}}=cp$ where $0<c<k$. Choose $c'\in \Z\slash p\Z$ so that $c'\cdot c\equiv k-2\mod p$. We see that $c'\cdot \frac{\overline{a_0}+\dots+\overline{a_{k-1}}}{p}\equiv c'\cdot c\equiv k-2\mod p$. Hence, $\overline{c'a_0}+\dots \overline{c'a_{k-2}}=(k-2)p$ and thus, letting $b_i=\overline{c'a_i}$, $[b_0,\dots,b_{k-1}]$ is a $k$-gon modulo $p$. Since $0<b_i<p$ for all $i$, we see that $[b_0,\dots,b_{k-1}]$ represents a convex polygon.
\end{proof}

\subsection{Monodromy Groups of Algebraic Polygons}
The purpose of introducing algebraic polygons is to understand monodromy groups of actual polygons. Therefore, we must associate to each algebraic polygon a monodromy group that coincides with the monodromy group in Section \ref{section:background} for geometric polygons. 

\begin{definition}
The {\it monodromy group} associated with an algebraic $k$-gon $[a_0,\dots,a_{k-1}]$ modulo $n$ is the group $N\rtimes C_k$ where $N$ is the additive group generated by the columns of the matrix
$$C=\begin{bmatrix}
a_0&a_{k-1}&\dots& a_2 &a_1\\
a_1& a_0&a_{k-1}& &a_2\\
\vdots& a_1&a_0&\ddots & \vdots\\
a_{k-2}&  & \ddots &\ddots&a_{k-1}\\
a_{k-1}&a_{k-2}&\dots&a_1&a_0
\end{bmatrix}$$
in the $\ZnZ$ module $(\ZnZ)^k$. The group $C_k$ acts on the columns of $C$ by cyclicly permuting the entries of a vector. 
\end{definition}

The monodromy groups that arose in Section \ref{section:background} were monodromy groups of dessins d'enfant drawn on rational billiards surfaces. Although these surfaces and dessins do not exist for algebraic polygons, associating a monodromy group with them will still prove quite useful theoretically.

\begin{remark}
If $[a_0,\dots,a_{k-1}]$ is a $k$-gon modulo $n$, then its monodromy group above is the same as the monodromy group of $D(a_0,\dots,a_{k-1})$ drawn on the rational polygonal billiards surface $X(a_0,\dots,a_{k-1})$. See Sections \ref{section:background} and \ref{section:structure_of_N} for reference.
\end{remark}

The following lemma illustrates that the monodromy group of associate algebraic polygons are isomorphic.

\begin{lemma}\label{lemma:associate_groups}
Fix $n\in \N$. If $[a_0,\dots,a_{k-1}]$ and $[b_0,\dots,b_{k-1}]$ are associate algebraic polygons, then their monodromy groups are the same.
\end{lemma}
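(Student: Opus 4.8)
The plan is to reduce the entire statement to a single observation about submodules of $(\ZnZ)^k$ under scaling by a unit. By the definition of associates, there is a unit $c\in(\ZnZ)^\times$ with $b_i\equiv ca_i\pmod n$ for all $i$. Since the circulant matrix of \eqref{equation:C} is built entrywise from the defining tuple, this relation immediately yields $C_b=cC_a$, where $C_a$ and $C_b$ denote the circulant matrices formed from $[a_0,\dots,a_{k-1}]$ and $[b_0,\dots,b_{k-1}]$ respectively.

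First I would use the definition of the monodromy group of an algebraic polygon to identify the subgroup $N$ in each case with the $\ZnZ$-submodule of $(\ZnZ)^k$ spanned by the columns of the corresponding circulant matrix. Writing $N_a$ and $N_b$ for these two submodules, the columns of $C_b=cC_a$ are exactly $c$ times the columns of $C_a$, so by $\ZnZ$-linearity of the span we have $N_b=cN_a$, where $cN_a$ denotes $\{c\,v:v\in N_a\}$.

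The crux of the argument—though it is brief—is to check that $cN_a=N_a$. Since $N_a$ is a $\ZnZ$-submodule it is closed under multiplication by the scalar $c$, so $cN_a\subseteq N_a$; applying the same closure to the unit $c^{-1}$ gives $c^{-1}N_a\subseteq N_a$, whence $N_a=c(c^{-1}N_a)\subseteq cN_a$. Combining the two inclusions yields $cN_a=N_a$, and therefore $N_b=N_a$ as submodules of $(\ZnZ)^k$—not merely isomorphic, but literally equal.

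Finally I would conclude by comparing the two semidirect products. Both monodromy groups have the form $N\rtimes C_k$ with the same second factor $C_k$ and the same $C_k$-action, namely cyclic permutation of the vector entries, which is independent of the underlying tuple. Because $N_a=N_b$, the two groups $N_a\rtimes C_k$ and $N_b\rtimes C_k$ coincide. I do not anticipate any genuine obstacle here: the only subtlety is that the lemma asserts equality of groups rather than mere isomorphism, and this is precisely what the identity $cN_a=N_a$ secures.
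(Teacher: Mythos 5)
Your proposal is correct and follows essentially the same route as the paper: both identify the monodromy group's normal subgroup with the column span of the circulant matrix, observe that the two matrices differ by the unit scalar $c$, and conclude that the spans coincide. Your explicit verification that $cN_a=N_a$ (via closure under $c$ and $c^{-1}$) and your remark that the $C_k$-action by cyclic permutation is independent of the tuple merely spell out details the paper leaves implicit.
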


\begin{proof}
Since $[a_0,\dots,a_{k-1}]$ and $[b_0,\dots,b_{k-1}]$ are associates, there exists $c\in (\ZnZ)^\times$ such that $b_i\equiv c p_i$ for all $i$. Let $C'$ and $C''$ be the corresponding circulant matrices for $[b_0,\dots,b_{k-1}]$ and $[a_0,\dots,a_{k-1}]$ respectively. Therefore, 

\begin{align*}
C'=\begin{bmatrix}
b_0&b_{k-1}&\dots& b_2 &b_1\\
b_1& b_0&b_{k-1}& &b_2\\
\vdots& b_1&b_0&\ddots & \vdots\\
b_{k-2}&  & \ddots &\ddots&b_{k-1}\\
b_{k-1}&b_{k-2}&\dots&b_1&b_0
\end{bmatrix} 
 &\equiv c\begin{bmatrix}
a_0&a_{k-1}&\dots& a_2 &a_1\\
a_1& a_0&a_{k-1}& &a_2\\
\vdots& a_1&a_0&\ddots & \vdots\\
a_{k-2}&  & \ddots &\ddots&a_{k-1}\\
a_{k-1}&a_{k-2}&\dots&a_1&a_0
\end{bmatrix}
\equiv c\cdot C''\mod n.
\end{align*}
Since $C'$ and $C''$, are scalar multiples of each other by a unit, the spans of their columns are equal. The result follows.
\end{proof}

\begin{proposition}\label{proposition:combine_groups}
Suppose that $[a_0,\dots,a_{k-1}]$ and $[b_0,\dots,b_{k-1}]$ represent algebraic $k$-gons modulo $n_1$ and $n_2$ respectively where $\gcd(n_1,n_2)=1$. Suppose their respective monodromy groups are $N_1\rtimes C_k$ and $N_2\rtimes C_k$. Then there exists an algebraic $k$-gon $[c_0,\dots,c_{k-1}]$ modulo $n_1n_2$ with monodromy group $(N_1\times N_2)\rtimes C_k$. Furthermore, if $a_i\not\equiv 0\mod n_1$ or $b_i\not\equiv 0\mod n_2$ for every $i$, then $c_i\not\equiv 0\mod n_1n_2$ for all $i$.
\end{proposition}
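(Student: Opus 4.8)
The plan is to construct $[c_0,\dots,c_{k-1}]$ via the Chinese Remainder Theorem. Since $\gcd(n_1,n_2)=1$, the ring isomorphism $\ZnZ$-analogue $\Z/n_1n_2\Z\cong\Z/n_1\Z\times\Z/n_2\Z$ lets me define each $c_i$ to be the unique residue modulo $n_1n_2$ satisfying $c_i\equiv a_i\pmod{n_1}$ and $c_i\equiv b_i\pmod{n_2}$. First I would verify that this tuple is an algebraic $k$-gon modulo $n_1n_2$: the sum condition $c_0+\dots+c_{k-1}\equiv 0\pmod{n_1n_2}$ follows because the sum is $\equiv 0$ modulo each of $n_1$ and $n_2$ by hypothesis, and the coprimality condition $\gcd(c_0,\dots,c_{k-1},n_1n_2)=1$ follows because any prime dividing $n_1n_2$ divides exactly one of $n_1,n_2$, and such a prime cannot divide all the $c_i$ without violating the corresponding gcd condition for the $a_i$ or the $b_i$.

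The core of the argument identifies the monodromy group. Let $C$, $C^{(1)}$, $C^{(2)}$ be the circulant matrices associated (as in the definition of the monodromy group of an algebraic polygon) to $[c_0,\dots,c_{k-1}]$, $[a_0,\dots,a_{k-1}]$, $[b_0,\dots,b_{k-1}]$ over $\Z/n_1n_2\Z$, $\Z/n_1\Z$, $\Z/n_2\Z$ respectively. By construction $C\equiv C^{(1)}\pmod{n_1}$ and $C\equiv C^{(2)}\pmod{n_2}$. I would then extend the CRT ring isomorphism to a module isomorphism $(\Z/n_1n_2\Z)^k\cong(\Z/n_1\Z)^k\times(\Z/n_2\Z)^k$ under which $C$ acts componentwise as $C^{(1)}$ on the first factor and $C^{(2)}$ on the second. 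Because the column span of a matrix is precisely the image of the associated linear map, and this map is block-diagonal under CRT, its image decomposes as a direct product; hence $N\cong N_1\times N_2$ as abelian groups.

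To complete the semidirect product statement I would check that the CRT isomorphism is equivariant for the $C_k$-action. The action of $C_k$ on each of $N$, $N_1$, $N_2$ is cyclic permutation of the vector entries, and since CRT is applied coordinatewise it commutes with this permutation. Thus the isomorphism $N\cong N_1\times N_2$ intertwines the $C_k$-action on $N$ with the diagonal action on $N_1\times N_2$, yielding $N\rtimes C_k\cong(N_1\times N_2)\rtimes C_k$ as claimed. Finally, for the nonvanishing clause: $c_i\equiv 0\pmod{n_1n_2}$ holds if and only if both $a_i\equiv 0\pmod{n_1}$ and $b_i\equiv 0\pmod{n_2}$, so the hypothesis that at least one of these fails for every $i$ forces $c_i\not\equiv 0\pmod{n_1n_2}$ for all $i$.

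The step I expect to demand the most care is the identification of the column span together with the equivariance: I must confirm that the CRT module isomorphism genuinely carries the $\Z/n_1n_2\Z$-span of the columns of $C$ onto the product of the two spans $N_1\times N_2$, and that the cyclic $C_k$-action transports correctly, since it is the full semidirect product structure (not merely the order of the group) that is being asserted.
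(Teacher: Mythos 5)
Your proposal is correct, and the construction step is the same as the paper's: both invoke the Chinese Remainder Theorem to define $c_i$ by $c_i\equiv a_i\pmod{n_1}$, $c_i\equiv b_i\pmod{n_2}$, and verify the algebraic $k$-gon conditions identically (your prime-by-prime gcd argument and your reading of the nonvanishing clause match the paper, which in fact leaves the ``furthermore'' clause implicit). Where you diverge is in identifying $N$. The paper works through Theorem \ref{theorem:main2}: it takes the elementary divisors $d_1,\dots,d_k$ of the integer circulant matrix, writes $N\cong\bigoplus_i C_{n_1n_2/\gcd(d_i,n_1n_2)}$, and splits each cyclic factor as $C_{n_1/\gcd(d_i,n_1)}\oplus C_{n_2/\gcd(d_i,n_2)}$ using $\gcd(n_1,n_2)=1$; this yields the auxiliary identifications $N_1\cong n_2N\cong N/n_1N$ and $N_2\cong n_1N\cong N/n_2N$, which the paper reuses later in Proposition \ref{proposition:project_group}. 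You instead apply CRT at the module level, decomposing $(\Z/n_1n_2\Z)^k\cong(\Z/n_1\Z)^k\times(\Z/n_2\Z)^k$ and observing that the circulant map is block-diagonal under this isomorphism, so its image (the column span) decomposes as $N_1\times N_2$ directly, without any Smith Normal Form computation. Your route has a genuine advantage the paper glosses over: the monodromy group is $N\rtimes C_k$ with $C_k$ acting by cyclic permutation of entries, so one really must check that the isomorphism $N\cong N_1\times N_2$ intertwines the actions, and your coordinatewise-CRT equivariance argument supplies exactly that (the paper's abstract-abelian-group identification via elementary divisors does not by itself track the action, though it can be repaired by noting the internal decomposition $N=n_2N\oplus n_1N$ is $C_k$-stable). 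The paper's route, in exchange, produces the quotient descriptions of $N_1$ and $N_2$ needed elsewhere.
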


\begin{proof}
By the Chinese Remainder Theorem, there exist unique integers $c_i$ with $0<c_i<n_1n_2$ such that $c_i\equiv a_i\mod n_1$ and $c_i\equiv b_i\mod n_2$ for all $i$. Since $c_i\equiv a_i\mod n_1$, we see that $c_0+\dots +c_{k-1}\equiv 0\mod n_1$ and $\gcd(c_0,\dots,c_{k-1},n_1)=1$. A similar argument shows that $c_0+\dots +c_{k-1}\equiv 0\mod n_2$ and $\gcd(c_0,\dots,c_{k-1},n_2)=1$. Hence, $c_0+\dots +c_{k-1}\equiv 0\mod n_1n_2$ and $\gcd(c_0,\dots,c_{k-1},n_1n_2)=1$ since $\gcd(n_1,n_2)=1$. 

Now, we will will compute the monodromy group of $[c_0,\dots,c_{k-1}]$ which is  $N\rtimes C_k$ where $N$ is an abelian group and submodule of $(\Z\slash n_1n_2\Z)^k$. Since $c_i\equiv a_i\mod n_1$ for all $i$ , we see that 
\begin{align}\label{equation:modular_monodromy_modn}
C'=\begin{bmatrix}
c_0&c_{k-1}&\dots& c_2 &c_1\\
c_1& c_0&c_{k-1}& &c_2\\
\vdots& c_1&c_0&\ddots & \vdots\\
c_{k-2}&  & \ddots &\ddots&c_{k-1}\\
c_{k-1}&c_{k-2}&\dots&c_1&c_0
\end{bmatrix}&\equiv 
\begin{bmatrix}
a_0&a_{k-1}&\dots& a_2 &a_1\\
a_1& a_0&a_{k-1}& &a_2\\
\vdots& a_1&a_0&\ddots & \vdots\\
a_{k-2}&  & \ddots &\ddots&a_{k-1}\\
a_{k-1}&a_{k-2}&\dots&a_1&a_0
\end{bmatrix}=C''\mod n_1.
\end{align}

Let $d_1,\dots,d_k$ be the elementary divisors of $C'$. They are the same modulo $n_1$ as the elementary divisors of $C''$. By Theorem \ref{theorem:main2}, we know the monodromy group of $[c_0,\dots,c_{k-1}]$ is 
\begin{equation}\label{equation:direct_product}
\bigoplus_{i=1}^k{C_{\delta_i}}=\bigoplus_{i=1}^k{C_{\frac{n_1n_2}{\gcd(d_i,n_1n_2)}}}=\bigoplus_{i=1}^{k}{C_{\frac{n_1}{\gcd(d_i,n_1)}}\oplus C_{\frac{n_2}{\gcd(d_i,n_2)}}}
\end{equation}
since $\gcd(n_1,n_2)=1$. Thus, the monodromy group of $[a_0,\dots,a_{k-1}]$ is $N_1=\displaystyle\bigoplus_{i=1}^k{C_{\frac{n_1}{\gcd(d_i,n_1)}}}$. Therefore, $N_1\cong n_2N\cong N\slash n_1N$. If $N_2$ is the monodromy group of $[b_0,\dots,b_{k-1}]$, then a similar argument shows that $N_2\cong n_1N\cong N\slash n_2N$. We conclude that $N\cong N_1\times N_2$ and the main result follows.
\end{proof}

\begin{remark}
In essence, Proposition \ref{proposition:combine_groups} allows one to combine two algebraic $k$-gons with coprime moduli and create a new algebraic $k$-gon $[c_0,\dots,c_{k-1}]$. The monodromy group of $[c_0,\dots,c_{k-1}]$ is a combination of the monodromy groups of the original two algebraic $k$-gons.
\end{remark}

\begin{example}
One can actually combine two algebraic $k$-gons with \underline{no} $k$-gon associates to create an algebraic $k$-gon with a $k$-gon associate. Consider the algebraic $3$-gon $[0,1,1]$ modulo $2$ with monodromy group $C_2^2\rtimes C_3$ and the algebraic $3$-gon $[1,0,4]$ modulo $5$ with monodromy group $C_5^2\rtimes C_3$. Neither of these algebraic $3$-gons have a polygonal associate. However, if we combine them using Proposition \ref{proposition:combine_groups}, we obtain the algebraic $3$-gon $[6,5,9]$ modulo $10$. This algebraic $3$-gon has a $3$-gon associate $[4,5,1]$ modulo $10$ obtained by scaling by $9\mod 10$. The $3$-gon $[4,5,1]$ has monodromy group $(C_2^2\times C_5^2)\rtimes C_3\cong C_{10}^2\rtimes C_3$.
\end{example}

\begin{proposition}\label{proposition:project_group}
Suppose that $[c_0,\dots,c_{k-1}]$ is an algebraic $k$-gon modulo $n_1n_2$ with $n_1,n_2>1$ and with monodromy group $N\rtimes C_k$. Then there exists an algebraic $k$-gon $[a_0,\dots,a_{k-1}]$ modulo $n_1$ with monodromy group $(n_2N)\rtimes C_k$. If $\gcd(n_1,n_2)=1$, then the monodromy group $(n_2N)\rtimes C_k\cong (N\slash n_1N)\rtimes C_k$.
\end{proposition}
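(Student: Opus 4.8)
The plan is to take the obvious candidate and verify it works through a kernel-coincidence argument. First I would set $a_i\equiv c_i \pmod{n_1}$ with $0\le a_i<n_1$ and check that $[a_0,\dots,a_{k-1}]$ is an algebraic $k$-gon modulo $n_1$: reducing $c_0+\dots+c_{k-1}\equiv 0\pmod{n_1n_2}$ modulo $n_1$ gives $a_0+\dots+a_{k-1}\equiv 0\pmod{n_1}$, and if a prime $p\mid n_1$ divided every $a_i$ then $p$ would divide every $c_i$ (since $p\mid n_1$ forces $c_i\equiv a_i\pmod p$) and also $n_1n_2$, contradicting $\gcd(c_0,\dots,c_{k-1},n_1n_2)=1$; hence $\gcd(a_0,\dots,a_{k-1},n_1)=1$, and in particular not all $a_i$ vanish. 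The point of this choice is that the circulant matrix attached to $[a_0,\dots,a_{k-1}]$ is exactly the reduction modulo $n_1$ of the integer circulant matrix $C$ attached to $[c_0,\dots,c_{k-1}]$, so that the $N$-part of the monodromy group of $[a_0,\dots,a_{k-1}]$, call it $N'$, is the column span of $C$ over $\Z/n_1\Z$, while $N$ is the column span of the same $C$ over $M:=\Z/n_1n_2\Z$.

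The key step is to identify $N'$ with $n_2N$. Consider two group homomorphisms out of $M$: reduction $\pi:M\to\Z/n_1\Z$ and multiplication by $n_2$, $\mu:M\to M$. A direct check shows both have the same kernel, namely the subgroup $n_1M$ of multiples of $n_1$ (for $\mu$ because $n_2x\equiv 0\pmod{n_1n_2}$ iff $n_1\mid x$). Extending coordinatewise to $M^k$ and restricting to $N$, the map $\pi^{\oplus k}$ carries $N$ onto $N'$ and the map $\mu^{\oplus k}$ carries $N$ onto $n_2N$, and both restrictions have kernel $N\cap(n_1M)^k$. Hence the first isomorphism theorem gives
$$n_2N\;\cong\;N/\bigl(N\cap(n_1M)^k\bigr)\;\cong\;N'.$$
Because both $\pi$ and $\mu$ commute with cyclic permutation of coordinates, this isomorphism is $C_k$-equivariant, so it upgrades to an isomorphism of semidirect products $(n_2N)\rtimes C_k\cong N'\rtimes C_k$; the right-hand side is by definition the monodromy group of $[a_0,\dots,a_{k-1}]$, proving the first assertion.

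For the coprime refinement I would specialize the quotient $N/(N\cap(n_1M)^k)$ to the case $\gcd(n_1,n_2)=1$. By the Chinese Remainder Theorem $M\cong\Z/n_1\Z\oplus\Z/n_2\Z$, and, exactly as in the proof of Proposition \ref{proposition:combine_groups}, $N$ splits correspondingly as $N_1\oplus N_2$ with $N_j$ the column span modulo $n_j$. Since $n_1$ acts as $0$ on the first summand and as a unit on the second, one computes $n_1N=0\oplus N_2$ and likewise $N\cap(n_1M)^k=0\oplus N_2$; thus $n_1N=N\cap(n_1M)^k$, so $N/n_1N\cong n_2N$ and $(n_2N)\rtimes C_k\cong(N/n_1N)\rtimes C_k$.

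The main obstacle, and the reason a naive argument fails, is that the reduction map alone does not send $n_2N$ isomorphically onto $N'$: in the non-coprime case $\pi(n_2M)=n_2(\Z/n_1\Z)$ is a proper subgroup of $\Z/n_1\Z$, so one cannot simply compose reduction with multiplication by $n_2$. The device that resolves this is recognizing that reduction and multiplication-by-$n_2$ share the common kernel $(n_1M)^k$, so both factor through the single quotient $N/(N\cap(n_1M)^k)$; verifying this kernel coincidence (and the $C_k$-equivariance) is the crux, after which everything is formal. As a fallback one could instead compare elementary divisors via Theorem \ref{theorem:main2}, checking prime-by-prime that $\delta_i/\gcd(n_2,\delta_i)=n_1/\gcd(d_i,n_1)$, but the kernel argument is cleaner and avoids the casework.
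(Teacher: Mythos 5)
Your proof is correct, but it takes a genuinely different route from the paper's. The paper chooses the same reduction $a_i\equiv c_i\bmod n_1$, observes that the two circulant matrices agree modulo $n_1$, and then argues through Smith Normal Form: the elementary divisors $d_1,\dots,d_k$ of the common integer circulant give $N\cong\bigoplus_i C_{\delta_i}$ with $\delta_i=\frac{n_1n_2}{\gcd(d_i,n_1n_2)}$ via Theorem \ref{theorem:main2}, and the group of the reduced polygon is $\bigoplus_i C_{n_1/\gcd(d_i,n_1)}$, which is identified with $n_2N$ ``by the exact same calculation as in \eqref{equation:modular_monodromy_modn}''; the coprime refinement then falls out of the splitting \eqref{equation:direct_product}. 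This is exactly the fallback you mention at the end, and note that the paper's terse citation silently relies on the arithmetic identity $\delta_i/\gcd(n_2,\delta_i)=n_1/\gcd(d_i,n_1)$ for arbitrary, possibly non-coprime $n_1,n_2$ (true, but it requires the prime-by-prime valuation check you allude to). Your kernel-coincidence argument --- that reduction $\pi:\ZnZ[n_1n_2]\to\Z/n_1\Z$ and multiplication $\mu=n_2\cdot$ both have kernel $n_1M$, so $n_2N\cong N/\bigl(N\cap(n_1M)^k\bigr)\cong N'$ by the first isomorphism theorem --- bypasses that identity entirely, and your CRT splitting $N\cong N_1\oplus N_2$ with $n_1N=N\cap(n_1M)^k$ handles the coprime case without invoking the $\delta_i$ formulas. (Ignore the stray ``$\ZnZ[n_1n_2]$'' notation above; your $M=\Z/n_1n_2\Z$ is the right object.) Your approach also buys something the paper does not explicitly deliver: since $\pi$ and $\mu$ commute with the coordinate shift, your isomorphism $n_2N\cong N'$ is $C_k$-equivariant, which is what is actually needed to conclude an isomorphism of the semidirect products $(n_2N)\rtimes C_k\cong N'\rtimes C_k$ with the paper's specified cyclic-permutation action; the paper's elementary-divisor computation yields only an abstract isomorphism of the normal subgroups and leaves equivariance implicit. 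The trade-off is that the paper's method stays inside the computational SNF framework of Theorem \ref{theorem:main2} used throughout (e.g., in Proposition \ref{proposition:combine_groups}), while yours is structural and self-contained.
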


\begin{proof}
Observe that $c_i\not\equiv 0\mod n_1$ for some $i$. If $n_1|c_i$ for all $i$, then $\gcd(c_0,\dots,c_{k-1},n_1n_2)>1$, a contradiction with the definition of an algebraic polygon.

Choose $a_i\equiv c_i\mod n_1$ for all $i$. We see that $a_0+\dots +a_{k-1}\equiv 0\mod n_1$ since $c_0+\dots +c_{k-1}\equiv 0\mod n_1n_2.$ Suppose that the monodromy group of $[a_0,\dots,a_{k-1}]$ is $N_1\rtimes C_k$. By the exact same calculation as in \eqref{equation:modular_monodromy_modn}, we conclude that $N_1\cong n_2N$.

Now suppose $\gcd(n_1,n_2)=1$. Using \eqref{equation:direct_product}, we see that the monodromy group of $[c_0,\dots,c_{k-1}]$ has the form $N\rtimes C_k$ where 
\begin{equation*}
N=\bigoplus_{i=1}^k{C_{\delta_i}}=\bigoplus_{i=1}^{k}{C_{\frac{n_1}{\gcd(d_i,n_1)}}\oplus C_{\frac{n_2}{\gcd(d_i,n_2)}}}
\end{equation*}
Observe that $n_2N\cong \displaystyle\bigoplus_{i=1}^{k}{C_{\frac{n_1}{\gcd(d_i,n_1)}}}$ and $n_1N\cong \displaystyle\bigoplus_{i=1}^{k}{C_{\frac{n_2}{\gcd(d_i,n_2)}}}$. Thus, $n_2N\cong N\slash n_1N.$
\end{proof}

\begin{remark}
If $n_1$ and $n_2$ are coprime in Proposition \ref{proposition:project_group}, then $N_1\cong N\slash n_1N$. However, this is not the case when $n_1$ and $n_2$ have a non-trivial gcd. We illustrate this phenomenon in the following example.
\end{remark}

\begin{example}
Consider the $k$-gon $[1,2,24,23]$ modulo $25$. The monodromy group is $N\rtimes C_4$ where $N\cong C_{25}\times C_5$. If we apply Proposition \ref{proposition:project_group} when $n_1=5$, we obtain the $k$-gon $[1,2,4,3]$ which has monodromy group $N_1\rtimes C_4$ where $N_1\cong C_5\cong 5N\not\cong N\slash 5N$.
\end{example}

The following proposition allows us to lift an algebraic $k$-gon modulo $n$ to an algebraic $\ell$-gon modulo $n$ if $k|\ell$.

\begin{proposition}\label{proposition:raising_rank_modular}
Suppose that $k,\ell\in\N$ and $k|\ell$. Further suppose that $[a_0,\dots,a_{k-1}]$ is an algebraic $k$-gon modulo $n$ with monodromy group $N\rtimes C_k$. Then there exists an algebraic $\ell$-gon $[c_0,\dots,c_{\ell-1}]$ modulo $n$ with monodromy group $N\rtimes C_\ell$.
\end{proposition}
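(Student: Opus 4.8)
The plan is to construct the desired $\ell$-gon by \emph{repeating} the given $k$-gon. Writing $\ell = mk$ (possible since $k \mid \ell$), I would define $c_i = a_{i \bmod k}$ for $0 \le i < \ell$, so that $[c_0,\dots,c_{\ell-1}]$ is the concatenation of $m$ copies of $[a_0,\dots,a_{k-1}]$. The first task is to check that this is a legitimate algebraic $\ell$-gon modulo $n$: since $\sum_{i=0}^{\ell-1} c_i = m\sum_{t=0}^{k-1} a_t \equiv 0 \bmod n$ and the set of values $\{c_0,\dots,c_{\ell-1}\}$ equals $\{a_0,\dots,a_{k-1}\}$, both defining conditions $\sum c_i \equiv 0 \bmod n$ and $\gcd(c_0,\dots,c_{\ell-1},n)=1$ follow immediately from the corresponding facts for $[a_0,\dots,a_{k-1}]$.

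The heart of the argument is to identify the group $N'$ generated by the columns of the $\ell\times\ell$ circulant matrix $C'$ associated to $[c_0,\dots,c_{\ell-1}]$. Let $P$ denote the cyclic down-shift operator on $(\ZnZ)^\ell$, so that the columns of $C'$ are exactly $c, Pc, \dots, P^{\ell-1}c$, where $c=(c_0,\dots,c_{\ell-1})^{T}$. The key observation is that $c$ is periodic with period $k$, so $P^k c = c$; consequently the list of columns repeats after $k$ steps, and $N' = \langle c, Pc, \dots, P^{k-1}c\rangle$ is generated by only $k$ vectors. Moreover each $P^j c$ again has period $k$, so $N'$ lies inside the submodule $W \le (\ZnZ)^\ell$ of period-$k$ vectors.

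Next I would set up the map $\pi\colon W \to (\ZnZ)^k$ that reads off a single period, $v \mapsto (v_0,\dots,v_{k-1})$. This is a $\ZnZ$-module isomorphism, and a direct check shows it intertwines $P|_W$ with the cyclic shift on $(\ZnZ)^k$: if $v\in W$ then $\pi(Pv)$ is the cyclic shift of $\pi(v)$ by one position, using $v_{-1}=v_{k-1}$. Since $\pi(c) = (a_0,\dots,a_{k-1})$ is the first column of the original circulant $C$, it follows that $\pi(P^j c)$ is the $(j{+}1)$-st column of $C$, whence $\pi$ carries $N'$ isomorphically onto $N$, the column span of $C$.

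Finally, I would assemble the semidirect product. By definition the monodromy group of $[c_0,\dots,c_{\ell-1}]$ is $N' \rtimes C_\ell$, with the generator of $C_\ell$ acting as $P|_{N'}$. Transporting along $\pi$, this action becomes the cyclic shift of entries on $N$, which is exactly the action of the generator of $C_k$ appearing in $N\rtimes C_k$ (the induced homomorphism $C_\ell \to \mathrm{Aut}(N)$ simply factors through reduction $C_\ell \to C_k$). Hence the monodromy group of the constructed $\ell$-gon is $N\rtimes C_\ell$, as required. The only step requiring genuine care is the bookkeeping of the previous paragraph, namely verifying that $\pi$ identifies the generating columns of $C'$ with those of $C$ and is equivariant for the shift; but this reduces to a routine index computation once the periodicity $P^k c = c$ is observed.
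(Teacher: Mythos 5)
Your proposal is correct and follows essentially the same route as the paper: both construct the $\ell$-gon by repeating $[a_0,\dots,a_{k-1}]$ exactly $\ell/k$ times and then identify the column span of the $\ell\times\ell$ circulant with that of the original $k\times k$ circulant (the paper via the block structure of $C'$, you via the period-$k$ submodule and the shift-equivariant map $\pi$). Your additional verification that $\pi$ intertwines the $C_\ell$-action with the cyclic shift on $(\ZnZ)^k$ (so the action factors through $C_\ell\to C_k$) is a point the paper leaves implicit, and it is a welcome bit of extra care rather than a divergence in method.
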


\begin{proof}
Let $c_i=a_j$ where $j$ is the least nonnegative integer satisfying $i\equiv j\mod k$. In essence, $$[c_0,\dots,c_{\ell-1}]=[a_0,\dots,a_{k-1},a_0,\dots, a_{k-1},a_0,\dots,a_{k-1}]$$
where the pattern $a_0,\dots,a_{k-1}$ repeats itself $\frac{\ell}{k}$ times. Let 
$$
C=
\begin{bmatrix}
a_0&a_{k-1}&\dots& a_2 &a_1\\
a_1& a_0&a_{k-1}& &a_2\\
\vdots& a_1&a_0&\ddots & \vdots\\
a_{k-2}&  & \ddots &\ddots&a_{k-1}\\
a_{k-1}&a_{k-2}&\dots&a_1&a_0
\end{bmatrix}
$$
and observe that 
$$
C'=\begin{bmatrix}
c_0&c_{\ell-1}&\dots& c_2 &c_1\\
c_1& c_0&c_{\ell-1}& &c_2\\
\vdots& c_1&c_0&\ddots & \vdots\\
c_{\ell-2}&  & \ddots &\ddots&c_{\ell-1}\\
c_{\ell-1}&c_{\ell-2}&\dots&c_1&c_0
\end{bmatrix}
=
\begin{bmatrix}
C & C&\dots &C & C\\
C & C & C & &C\\
\vdots & C & C&\ddots & \vdots\\
C& & \vdots & \vdots & C\\
C& C& \dots & C &C
\end{bmatrix}
$$
where the matrix $C$ appears $\frac{\ell}{k}$ times in each row and column. Therefore, the group generated by the columns of $C'$ is isomorphic to the group generated by the columns of $C$ and thus the monodromy group of $[c_0,\dots,c_{\ell-1}]$ is $N\rtimes C_\ell$.
\end{proof}

The following example illustrates how Proposition \ref{proposition:raising_rank_modular} is used to lift an algebraic $k$-gon to an algebraic $\ell$-gon.

\begin{example}
Let $k=2$, $\ell=4$ and consider the algebraic $2$-gon $[3,4]$ modulo $n=7$. Using Proposition \ref{proposition:raising_rank_modular}, lift $[3,4]$ to the algebraic $4$-gon $[3,4,3,4]$ modulo $7$. The monodromy group of $[3,4]$ is $C_7\rtimes C_2$ and the monodromy group of $[3,4,3,4]$ is $C_7\rtimes C_4$.
\end{example}

A quick lemma about semidirect products is needed to complete our series of results about combining algebraic polygons to form new algebraic polygons.

\begin{lemma}\label{lemma:semidirect_product}
Suppose that $N_1,H_1,N_2,H_2$ are finite groups. If $G_1\cong N_1\rtimes H_1$ and $G_2\cong N_2\rtimes H_2$ then $G_1\times G_2\cong (N_1\times N_2)\rtimes (H_1\times H_2)$.
\end{lemma}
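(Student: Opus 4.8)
The plan is to prove the isomorphism $G_1 \times G_2 \cong (N_1 \times N_2) \rtimes (H_1 \times H_2)$ by unwinding the definitions of the two semidirect products and exhibiting an explicit isomorphism, then checking it respects the group operations. Since $G_1 \cong N_1 \rtimes H_1$ and $G_2 \cong N_2 \rtimes H_2$, there are actions $\phi_1 : H_1 \to \mathrm{Aut}(N_1)$ and $\phi_2 : H_2 \to \mathrm{Aut}(N_2)$ determining the multiplication on each factor. From these I would build a combined action $\phi : H_1 \times H_2 \to \mathrm{Aut}(N_1 \times N_2)$ by the componentwise rule $\phi(h_1,h_2)(n_1,n_2) = (\phi_1(h_1)(n_1),\, \phi_2(h_2)(n_2))$. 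One first verifies that $\phi$ is a well-defined homomorphism into $\mathrm{Aut}(N_1 \times N_2)$; this is routine since each $\phi_i$ is a homomorphism and the two actions operate on disjoint coordinates.

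Next I would define the candidate isomorphism. An element of $G_1 \times G_2$ is naturally a pair $((n_1,h_1),(n_2,h_2))$ with $n_i \in N_i$ and $h_i \in H_i$, while an element of $(N_1 \times N_2) \rtimes (H_1 \times H_2)$ is a pair $((n_1,n_2),(h_1,h_2))$. The map I would use is simply the reshuffling
\begin{equation*}
\Psi\big(((n_1,h_1),(n_2,h_2))\big) = \big((n_1,n_2),(h_1,h_2)\big).
\end{equation*}
This is visibly a bijection, since it merely reorders the four coordinates. The content of the proof is the verification that $\Psi$ is a homomorphism, which amounts to checking that the product of two elements computed in $G_1 \times G_2$ maps under $\Psi$ to the product of their images computed in $(N_1 \times N_2) \rtimes (H_1 \times H_2)$.

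The key computation is to expand both sides using the semidirect product multiplication rule $(n,h)(n',h') = (n\,\phi(h)(n'),\, hh')$. In $G_1 \times G_2$, multiplication is componentwise, so the product of $((n_1,h_1),(n_2,h_2))$ and $((n_1',h_1'),(n_2',h_2'))$ is
\begin{equation*}
\big((n_1\,\phi_1(h_1)(n_1'),\, h_1 h_1'),\ (n_2\,\phi_2(h_2)(n_2'),\, h_2 h_2')\big).
\end{equation*}
On the other side, multiplying $((n_1,n_2),(h_1,h_2))$ by $((n_1',n_2'),(h_1',h_2'))$ in the combined semidirect product gives $\big((n_1,n_2)\cdot \phi(h_1,h_2)(n_1',n_2'),\, (h_1 h_1', h_2 h_2')\big)$, and by the definition of $\phi$ the first coordinate is exactly $(n_1\,\phi_1(h_1)(n_1'),\, n_2\,\phi_2(h_2)(n_2'))$. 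Comparing coordinate by coordinate shows the two results agree under the reshuffling $\Psi$, so $\Psi$ is an isomorphism.

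I do not anticipate a genuine obstacle here: the statement is a standard structural fact about semidirect products, and the only care required is bookkeeping to keep the four coordinates and the two separate actions straight, together with confirming that the componentwise action $\phi$ is well-defined and lands in $\mathrm{Aut}(N_1 \times N_2)$. The finiteness hypotheses on $N_1, H_1, N_2, H_2$ are not actually needed for the argument, but they do no harm and match the intended application to monodromy groups.
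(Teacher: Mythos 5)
Your proof is correct, but it takes a genuinely different (external) route from the paper's (internal) one. The paper never leaves the group $G_1\times G_2$: it exhibits the subgroups $N=\{((n_1,\epsilon),(n_2,\epsilon)):n_i\in N_i\}\cong N_1\times N_2$ and $H=\{((\epsilon,h_1),(\epsilon,h_2)):h_i\in H_i\}\cong H_1\times H_2$ and checks the internal semidirect product criterion --- $NH=G_1\times G_2$, $N\cap H$ trivial, and $N\lhd G_1\times G_2$, the last following immediately from $N_1\lhd G_1$ and $N_2\lhd G_2$ --- which is exactly the same three-condition criterion the paper already used to prove Theorem \ref{theorem:main1}. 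You instead build the external object $(N_1\times N_2)\rtimes_{\phi}(H_1\times H_2)$ with the componentwise action $\phi=\phi_1\times\phi_2$, check $\phi$ lands in $\mathrm{Aut}(N_1\times N_2)$, and verify by direct expansion of both multiplication rules that the coordinate shuffle $\Psi$ is an isomorphism. The trade-off: the paper's argument is shorter because normality does all the work and the internal machinery is already set up, but it leaves the action in the conclusion implicit (conjugation of $H$ on $N$ inside $G_1\times G_2$); your computation pins down that this action is precisely the componentwise one, which is the form actually used downstream, e.g.\ in Corollary \ref{corollary:monodromy_product}, where $H_1\times H_2=C_k\times C_\ell\cong C_{k\ell}$ acts on $N_1\times N_2$. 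You are also right that finiteness is never needed; the paper's proof does not use it either.
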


\begin{proof}
An element of the group $G_1\times G_2$ has the form $((n_1,h_1),(n_2,h_2))$ where $n_1\in N_1$, $n_2\in N_2$, $h_1\in H_1$, and $h_2\in H_2$. Let $\epsilon$ represent the identify in the respective group. Consider the subgroups 
$$
N=\langle ((n_1,\eps),(\eps,\eps)), ((\eps,\eps),(n_2,\eps)):n_1\in N_1,\ n_2\in N_2\rangle
$$
and
$$
H=\langle ((\eps,h_1),(\eps,\eps)),((\eps,\eps),(\eps, h_2)):h_1\in H_1,\ h_2\in H_2 \rangle.
$$
It is easy to see that $N\cong N_1\times N_2$ and $H\cong H_1\times H_2$ and $NH=G_1\times G_2$. It is also easy to see that $N\cap H$ contains only the identity of $G_1\times G_2$. In order to prove that $G_1\times G_2$ is isomorphic to $N\rtimes H$, we need to prove that $N\vartriangleleft G_1\times G_2$. This follows immediately from the fact that $N_1\vartriangleleft G_1$ and $N_2\vartriangleleft G_2$.
\end{proof}

Now, let us combine the results from Propositions \ref{proposition:combine_groups} and \ref{proposition:raising_rank_modular} to obtain the following corollary.

\begin{corollary}\label{corollary:monodromy_product}
Fix $n_1,n_2,k,\ell\in \N$ with $k,\ell\ge 2$. Suppose that $\gcd(n_1,n_2)=1$ and $\gcd(k,\ell)=1$. If $[a_0,\dots,a_{k-1}]$ is an algebraic $k$-gon modulo $n_1$ with monodromy group $N_1\rtimes C_k$ and $[b_0,\dots,b_{\ell-1}]$ is an algebraic $\ell$-gon modulo $n_2$ with monodromy group $N_2\rtimes C_{\ell}$, then there exists an algebraic $k\ell$-gon $[c_0,\dots,c_{k\ell-1}]$ modulo $n_1n_2$ with monodromy group $(N_1\times N_2)\rtimes C_{\ell k}\cong (N_1\rtimes C_k)\times (N_2\rtimes C_\ell)$.
\end{corollary}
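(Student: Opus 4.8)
The plan is to construct the required $k\ell$-gon in two stages---first lifting each given polygon to the common rank $k\ell$, then merging the two lifts across the coprime moduli---and finally to disentangle the resulting semidirect product using the coprimality of $k$ and $\ell$.

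First I would apply Proposition \ref{proposition:raising_rank_modular} twice. Since $k\mid k\ell$, the algebraic $k$-gon $[a_0,\dots,a_{k-1}]$ modulo $n_1$ lifts (by repeating the tuple $\ell$ times) to an algebraic $k\ell$-gon modulo $n_1$ whose monodromy group is $N_1\rtimes C_{k\ell}$. Symmetrically, since $\ell\mid k\ell$, the $\ell$-gon $[b_0,\dots,b_{\ell-1}]$ modulo $n_2$ lifts to an algebraic $k\ell$-gon modulo $n_2$ with monodromy group $N_2\rtimes C_{k\ell}$. Both objects are now algebraic $k\ell$-gons with coprime moduli, so Proposition \ref{proposition:combine_groups} applies and yields an algebraic $k\ell$-gon $[c_0,\dots,c_{k\ell-1}]$ modulo $n_1n_2$ with monodromy group $(N_1\times N_2)\rtimes C_{k\ell}$. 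This settles the existence part of the statement.

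It remains to prove the isomorphism $(N_1\times N_2)\rtimes C_{k\ell}\cong(N_1\rtimes C_k)\times(N_2\rtimes C_\ell)$, which is where the real work lies. By Lemma \ref{lemma:semidirect_product} the right-hand side equals $(N_1\times N_2)\rtimes(C_k\times C_\ell)$, and since $\gcd(k,\ell)=1$ the Chinese Remainder Theorem gives $C_{k\ell}\cong C_k\times C_\ell$; thus both groups have the same underlying set and it suffices to verify that the two semidirect-product actions agree. The crux is to track how the generator $\tau$ of $C_{k\ell}$, which acts by a single cyclic shift of the length-$k\ell$ vectors, behaves after the CRT splitting $(\Z/n_1n_2\Z)^{k\ell}\cong(\Z/n_1\Z)^{k\ell}\times(\Z/n_2\Z)^{k\ell}$. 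On the $n_1$-component the lifted subgroup $N_1$ consists of the vectors obtained by repeating some $u\in N_1$ a total of $\ell$ times, and one checks that $\tau$ carries such a vector to the analogous repetition of the cyclic shift of $u$; hence $\tau^{k}$ acts trivially there and the action of $C_{k\ell}$ on $N_1$ factors through $C_k$ as the cyclic-shift action, while symmetrically $\tau^{\ell}$ acts trivially on the $N_2$-component so that the action on $N_2$ factors through $C_\ell$. Matching $\tau$ with its image under $C_{k\ell}\cong C_k\times C_\ell$ then shows this action coincides exactly with the product action appearing in $(N_1\rtimes C_k)\times(N_2\rtimes C_\ell)$. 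I expect this verification---that the single cyclic shift restricts to the correct shift on each CRT factor and therefore descends through the appropriate cyclic quotient---to be the main obstacle; the remainder of the argument is simply the assembly of Propositions \ref{proposition:raising_rank_modular} and \ref{proposition:combine_groups} together with Lemma \ref{lemma:semidirect_product}.
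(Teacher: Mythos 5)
Your proposal is correct and follows essentially the same route as the paper: lift both polygons to algebraic $k\ell$-gons via Proposition \ref{proposition:raising_rank_modular}, merge them across the coprime moduli via Proposition \ref{proposition:combine_groups}, and then split $C_{k\ell}\cong C_k\times C_\ell$ and invoke Lemma \ref{lemma:semidirect_product}. Your explicit check that the single cyclic shift $\tau$ acts on the repeated $N_1$-vectors through $C_k$ and on the $N_2$-vectors through $C_\ell$ (so the $C_{k\ell}$-action matches the componentwise action) is a worthwhile detail the paper leaves implicit when it cites Lemma \ref{lemma:semidirect_product}.
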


\begin{proof}
Combining Propositions \ref{proposition:combine_groups} and \ref{proposition:raising_rank_modular} give us the desired algebraic $k\ell$-gon $[c_0,\dots,c_{k\ell-1}]$ with monodromy group $(N_1\rtimes N_2)\rtimes C_{k\ell}$. Since $\gcd(k,\ell)=1$, $C_{k\ell}\cong C_k\times C_\ell$. Thus, by Lemma \ref{lemma:semidirect_product}, $(N_1\rtimes N_2)\rtimes C_{k\ell}\cong (N_1\rtimes C_k)\times (N_2\rtimes C_\ell)$.
\end{proof}

The following example illustrates how to use Corollary \ref{corollary:monodromy_product}.

\begin{example}
Let $k=3$, $\ell=4$, $n_1=7$ and $n_2=5$. Let $[1,2,4]$ be our algebraic $3$-gon modulo $7$ and let $[2,3,3,2]$ be our algebraic $4$-gon modulo $5$. The monodromy group of group of $[1,2,4]$ is $C_7\rtimes C_3$ and the monodromy group of $[2,3,3,2]$ is $C_5^2\rtimes C_4$. Using Proposition \ref{proposition:raising_rank_modular}, we lift $[1,2,4]$ to $[1,2,4,1,2,4,1,2,4,1,2,4]$ and we lift $[2,3,3,2]$ to $[2,3,3,2,2,3,3,2,2,3,3,2]$. Using Proposition \ref{proposition:combine_groups}, we combine these algebraic $12$-gons to obtain $[22,23,8,22,2,18,8,2,32,8,23,32]$ modulo $35$ which has monodromy group $(C_7\times C_5^2)\rtimes C_{12}\cong (C_7\rtimes C_3)\times (C_5^2\rtimes C_4)$.
\end{example}

\section{Results about Circulant Matrices}\label{section:circ_matrices}

The following results on circulant matrices will be needed to compute monodromy groups of polygons modulo $p$ when $p$ is prime. The results are well known over $\C$, and we provide the proofs for the corresponding results over finite fields for completeness.

\begin{definition}
A $k\times k$ {\it circulant} matrix $C$ has the following form
$$
C=\begin{bmatrix}
a_0&a_{k-1}&\dots& a_2 &a_1\\
a_1& a_0&a_{k-1}& &a_2\\
\vdots& a_1&a_0&\ddots & \vdots\\
a_{k-2}&  & \ddots &\ddots&a_{k-1}\\
a_{k-1}&a_{k-2}&\dots&a_1&a_0
\end{bmatrix}.
$$
\end{definition}

For the purposes of this paper, the entries $c_i$ are integers or integers modulo $n$. 

\begin{definition}
We call the polynomial $f(x)=a_0+a_1x+\dots+a_{k-1}x^{k-1}$ the {\it associated polynomial} of the circulant matrix $C$.
\end{definition}

\begin{lemma}\label{lemma:circulant_eigen}
Assume that you have a $k\times k$ circulant matrix $C$ with entries in a field $\F$. Also assume that $\omega$ is a primitive $k$th root of unity. The vectors $$v_j=\begin{bmatrix} 1\\\omega^j\\ \omega^{2j}\\ \vdots \\ \omega^{(k-1)j}\end{bmatrix}$$ for $j=0,\dots,k-1$ are eigenvectors of a circulant matrix $C$ with respective eigenvalues $\lambda_j=a_0+a_{k-1}\omega^j+a_{k-2}\omega^{2j}+\dots +a_1\omega^{(k-1)j}$.
\end{lemma}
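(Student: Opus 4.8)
The plan is to verify directly that each $v_j$ is an eigenvector by computing the product $Cv_j$ entrywise and showing that every coordinate equals $\lambda_j$ times the corresponding coordinate of $v_j$. This is a routine computation, but the indexing of the circulant requires care, so I would set up the notation explicitly first. Writing $C=(c_{rs})_{0\le r,s<k}$, I would read off from the displayed matrix that the entry in row $r$, column $s$ is $c_{rs}=a_{(r-s)\bmod k}$; one checks this against the given matrix (the main diagonal is $a_0$, the entry below it is $a_1$, the top-right entry is $a_1$, etc.). The eigenvector $v_j$ has $s$-th coordinate $\omega^{sj}$.

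\smallskip

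Next I would compute the $r$-th coordinate of $Cv_j$, namely
$$
(Cv_j)_r=\sum_{s=0}^{k-1}c_{rs}\,\omega^{sj}=\sum_{s=0}^{k-1}a_{(r-s)\bmod k}\,\omega^{sj}.
$$
The key step is the change of index $t=(r-s)\bmod k$, so that $s\equiv r-t$ and $\omega^{sj}=\omega^{(r-t)j}=\omega^{rj}\omega^{-tj}$, using that $\omega^k=1$ makes $\omega^{sj}$ depend only on $s$ modulo $k$. As $s$ ranges over a complete residue system modulo $k$, so does $t$, giving
$$
(Cv_j)_r=\omega^{rj}\sum_{t=0}^{k-1}a_t\,\omega^{-tj}=\omega^{rj}\sum_{t=0}^{k-1}a_t\,\omega^{(k-t)j},
$$
where the last equality again uses $\omega^k=1$. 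The inner sum is independent of $r$, and one checks it equals the claimed eigenvalue $\lambda_j=a_0+a_{k-1}\omega^j+a_{k-2}\omega^{2j}+\dots+a_1\omega^{(k-1)j}$ by matching the $t$-th term $a_t\omega^{(k-t)j}$ against the stated expansion (the $t=0$ term gives $a_0$, $t=k-1$ gives $a_{k-1}\omega^j$, and so on). Since $(Cv_j)_r=\lambda_j\,\omega^{rj}=\lambda_j\,(v_j)_r$ for every $r$, we conclude $Cv_j=\lambda_j v_j$.

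\smallskip

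The only genuine subtlety — and the step I would watch most carefully — is the bookkeeping with the modular index $(r-s)\bmod k$ together with the periodicity $\omega^k=1$, since an off-by-one or a sign error in the exponent would swap the roles of $a_t$ and $a_{k-t}$ and produce the transpose's eigenvalue instead of the stated one. Everything else is formal: the computation is valid over any field $\F$ containing a primitive $k$th root of unity $\omega$, so no special property of $\C$ or of finite fields is used beyond the existence of $\omega$, which is assumed in the hypotheses. I would therefore present the proof as the single index-shift computation above, emphasizing the reindexing and the use of $\omega^k=1$.
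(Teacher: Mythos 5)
Your proof is correct and follows essentially the same route as the paper: a direct entrywise verification that $Cv_j=\lambda_j v_j$ via reindexing the sum. The only difference is cosmetic — you handle the circulant indexing with a single sum over $(r-s)\bmod k$ using $\omega^k=1$, whereas the paper splits the sum into two ranges and reindexes each separately; both reduce to the same computation, and your identification $c_{rs}=a_{(r-s)\bmod k}$ and the resulting eigenvalue $\sum_{t}a_t\omega^{(k-t)j}=\lambda_j$ check out.
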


\begin{proof}
Let $(C\cdot v_j)_i$ denote the $i$th entry of the column vector $C\cdot v_j$. Observe that 
\begin{align*}
    (C\cdot v_j)_i &= \sum_{n=0}^ia_{i-n}\cdot \omega^{nj} + \sum_{n=0}^{k-i-2}a_{k-1-n}\cdot \omega^{(i+1+n)j}\\
    &=\omega^{ij}\left[ \sum_{n=0}^ia_{i-n}\cdot \omega^{(n-i)j} + \sum_{n=0}^{k-i-2}a_{k-1-n}\cdot \omega^{(1+n)j}\right]\\
    &=\omega^{ij}\left[ \sum_{n=0}^ia_{i-n}\cdot \omega^{(k+n-i)j} + \sum_{n=0}^{k-i-2}a_{k-1-n}\cdot \omega^{(1+n)j}\right].
\end{align*}

If we re-index the two sums letting $n'=i-n$ in the first sum and $n'=k-1-n$ in the second sum, we obtain:
\begin{align*}
(C\cdot v_j)_i &= \omega^{ij}\left[\sum_{n'=0}^i{a_{n'}\omega^{(k-n')j}}+\sum_{n'=i+1}^{k-1}{a_{n'}\omega^{j(k-n')}} \right]\\
&=\omega^{ij}\left[ a_0+a_{k-1}\omega^j+a_{k-2}\omega^{2j}+\dots +a_1\omega^{(k-1)j}\right]=\lambda_j\omega^{ij}.
\end{align*}

It follows that $C\cdot v_j = \lambda_j\cdot v_j$, so $v_j$ is an eigenvector with eigenvalue $\lambda_j$.
\end{proof}

\begin{lemma}
Assume that you have a $k\times k$ circulant matrix $C$ with entries in a field $\F$ that has $k$ distinct $k$th roots of unity. Then the eigenvectors $$v_j=\begin{bmatrix} 1\\\omega^j\\ \omega^{2j}\\ \vdots \\ \omega^{(k-1)j}\end{bmatrix}$$ form a basis for the vector space $\F^k$ and thus $C$ is diagonalizable.
\end{lemma}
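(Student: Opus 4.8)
The plan is to show that the $k$ eigenvectors $v_0,\dots,v_{k-1}$ are linearly independent over $\F$, since a linearly independent set of $k$ vectors in a $k$-dimensional space is automatically a basis, and a $k\times k$ matrix with a basis of eigenvectors is diagonalizable by definition. The hypothesis that $\F$ contains $k$ distinct $k$th roots of unity guarantees both that a primitive $k$th root of unity $\omega$ exists (so Lemma \ref{lemma:circulant_eigen} applies and the $v_j$ are genuine eigenvectors) and that the powers $1,\omega,\dots,\omega^{k-1}$ are pairwise distinct.

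First I would assemble the $v_j$ as the columns of the $k\times k$ matrix $W$ whose $(i,j)$ entry is $\omega^{ij}$; this is precisely a Vandermonde matrix in the nodes $1,\omega,\dots,\omega^{k-1}$. The key step is then the Vandermonde determinant formula, which gives
\begin{equation*}
\det W = \prod_{0\le s<t\le k-1}(\omega^{t}-\omega^{s}).
\end{equation*}
Because the $k$ roots of unity are distinct by hypothesis, every factor $\omega^t-\omega^s$ with $s\ne t$ is nonzero in $\F$, and since $\F$ is a field (an integral domain) the product is nonzero. Hence $\det W\ne 0$, so $W$ is invertible and its columns $v_0,\dots,v_{k-1}$ form a basis of $\F^k$.

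To finish, I would note that writing $P=W$, the relation $C v_j=\lambda_j v_j$ for each $j$ assembles into the single matrix equation $CP=P\Lambda$, where $\Lambda=\mathrm{diag}(\lambda_0,\dots,\lambda_{k-1})$; since $P$ is invertible this rearranges to $P^{-1}CP=\Lambda$, exhibiting $C$ as diagonalizable.

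The main obstacle, and really the only substantive point, is the invertibility of the Vandermonde matrix: one must be careful that the argument is valid over an arbitrary field rather than just $\C$. This is exactly where the hypothesis that $\F$ possesses $k$ \emph{distinct} $k$th roots of unity is essential, since over a field of characteristic dividing $k$ one could have repeated roots (forcing a vanishing Vandermonde factor) and the conclusion would fail; the distinctness assumption is precisely what rules this out and makes the Vandermonde determinant a unit in $\F$.
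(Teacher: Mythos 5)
Your proposal is correct and follows essentially the same route as the paper: both arguments assemble the $v_j$ into a Vandermonde matrix, invoke the Vandermonde determinant formula $\prod_{0\le s<t\le k-1}(\omega^t-\omega^s)$, and use the distinctness of the $k$th roots of unity together with the fact that $\F$ has no zero divisors to conclude the determinant is nonzero. Your additional remarks --- making explicit the matrix identity $P^{-1}CP=\Lambda$ and flagging that characteristic dividing $k$ is exactly what the distinctness hypothesis rules out --- are correct refinements of the same argument, not a different method.
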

\begin{proof}

Let $V$ be the matrix $[v_j:0\leq j \leq k-1]$. Note that $V$ is a Vandermonde matrix. Thus 
$$
\det(V) = \prod_{0\le i<j\le k-1}(\omega^j-\omega^i).
$$
Suppose $\det(V) = 0$. Then $\omega^j - \omega^i = 0$ for some $i\ne j$. This is a contradiction with the fact that $\F$ has $k$ distinct $k$th roots of unity. Hence, $\det(V)\ne 0$ and the eigenvectors $v_j$ form a basis of $\F^k$ and $C$ is diagonalizable.

\end{proof}

\begin{corollary}
If $C$ is a $k\times k$ circulant matrix over a field $\F$ which has an algebraic extension with $k$ distinct $k$th roots of unity, 
$$
\det(C)=\prod_{j=0}^{k-1}{(a_0+a_1\omega^j+\dots+a_{k-1}\omega^{(k-1)j})}=\prod_{j=0}^{k-1}{f(\omega^j)}
$$
where $f$ is the associated polynomial of $C$.
\end{corollary}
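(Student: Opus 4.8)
The plan is to deduce the determinant formula directly from the two preceding lemmas; the only genuine work is to reconcile the eigenvalues $\lambda_j$ produced there with the factors $f(\omega^j)$ appearing in the statement.

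First I would pass to an algebraic extension $\mathbb{K}\supseteq\F$ containing $k$ distinct $k$th roots of unity, which exists by hypothesis, and fix a primitive $k$th root of unity $\omega\in\mathbb{K}$. Since $\det(C)$ is a polynomial in the entries $a_0,\dots,a_{k-1}\in\F$, its value is unchanged whether computed over $\F$ or over $\mathbb{K}$, so it suffices to work over $\mathbb{K}$. By Lemma \ref{lemma:circulant_eigen} the vectors $v_0,\dots,v_{k-1}$ are eigenvectors of $C$ with eigenvalues $\lambda_j=a_0+a_{k-1}\omega^j+a_{k-2}\omega^{2j}+\dots+a_1\omega^{(k-1)j}$, and by the diagonalizability lemma that follows it these $v_j$ form a basis of $\mathbb{K}^k$, so $C$ is diagonalizable over $\mathbb{K}$. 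Writing $C=PDP^{-1}$ with $D=\mathrm{diag}(\lambda_0,\dots,\lambda_{k-1})$ then gives $\det(C)=\det(D)=\prod_{j=0}^{k-1}\lambda_j$.

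The main step is to show $\prod_{j=0}^{k-1}\lambda_j=\prod_{j=0}^{k-1}f(\omega^j)$, which is not immediate because $\lambda_j$ does \emph{not} equal $f(\omega^j)$ termwise. The eigenvalue formula reads off the circulant entries in reverse order, so the coefficient of $\omega^{mj}$ in $\lambda_j$ is $a_{(k-m)\bmod k}$. Substituting $m\mapsto k-m$ in the defining sum and using $\omega^{kj}=1$, I would show that
$$
\lambda_j=\sum_{m=0}^{k-1}a_m\omega^{-mj}=f(\omega^{-j})=f(\omega^{k-j}).
$$
Since the map $j\mapsto k-j$ (equivalently $j\mapsto -j\bmod k$) is a permutation of $\{0,1,\dots,k-1\}$, reindexing the product yields
$$
\prod_{j=0}^{k-1}\lambda_j=\prod_{j=0}^{k-1}f(\omega^{-j})=\prod_{j=0}^{k-1}f(\omega^{j}).
$$
Combining this with $\det(C)=\prod_j\lambda_j$ and recalling $f(\omega^j)=a_0+a_1\omega^j+\dots+a_{k-1}\omega^{(k-1)j}$ gives the claimed identity.

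I expect the only real obstacle to be the bookkeeping in the displayed computation above: because Lemma \ref{lemma:circulant_eigen} lists the entries as $a_0,a_{k-1},\dots,a_1$, the individual factors come out as $f(\omega^{-j})$ rather than $f(\omega^{j})$, and the equality of the two products rests entirely on the symmetry $j\mapsto -j$ of the index set. Everything else — invariance of the determinant under the field extension and the fact that the determinant of a diagonalizable matrix is the product of its eigenvalues — is standard.
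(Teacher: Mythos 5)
Your proof is correct and takes essentially the same approach as the paper: diagonalize $C$ over the extension containing the $k$th roots of unity and compute $\det(C)$ as the product of the eigenvalues from Lemma \ref{lemma:circulant_eigen}. You are in fact more careful than the paper's one-line proof, which silently passes over the point you isolate --- that $\lambda_j=f(\omega^{-j})$ rather than $f(\omega^j)$, so the stated formula depends on the reindexing $j\mapsto -j \bmod k$ of the product.
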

\begin{proof}
The determinant of a diagonalizable matrix is the product of the eigenvalues (including multiplicities) listed in Lemma \ref{lemma:circulant_eigen}.
\end{proof}

\begin{lemma}\label{lemma:circulant_rank}
The rank of a $k\times k$ circulant matrix $C$ over a field $\F$ which has an algebraic extension with $k$ distinct $k$th roots of unity is equal to $k-d$ where $d$ is the degree of $\gcd(f(x),x^k-1)$.
\end{lemma}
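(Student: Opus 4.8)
The plan is to connect the rank of the circulant matrix $C$ to the number of nonzero eigenvalues, and then count those eigenvalues using the roots of unity. Since the preceding results establish that $C$ is diagonalizable over the algebraic extension $\F'$ of $\F$ containing $k$ distinct $k$th roots of unity, the rank of $C$ over $\F$ equals the rank over $\F'$ (rank is invariant under field extension), and over $\F'$ the rank of a diagonalizable matrix equals the number of nonzero eigenvalues counted with multiplicity. So the key reduction is: $\rank(C) = k - \#\{j : \lambda_j = 0\}$, where the count includes multiplicity and the $\lambda_j$ are the eigenvalues from Lemma \ref{lemma:circulant_eigen}.

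The next step is to identify the zero eigenvalues. By Lemma \ref{lemma:circulant_eigen}, the eigenvalue associated to $v_j$ is $\lambda_j = f(\omega^j)$, where $f(x) = a_0 + a_1 x + \dots + a_{k-1}x^{k-1}$ is the associated polynomial and $\omega$ is a fixed primitive $k$th root of unity (one must check the indexing matches, reconciling the reversed-order expression $a_0 + a_{k-1}\omega^j + \dots + a_1\omega^{(k-1)j}$ with $f(\omega^j)$ via the relation $\omega^{kj}=1$, exactly as done in the corollary computing $\det(C)$). As $j$ ranges over $0,\dots,k-1$, the values $\omega^j$ range over precisely the $k$ distinct $k$th roots of unity, which are exactly the roots of $x^k - 1$ in $\F'$. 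Therefore $\lambda_j = 0$ if and only if $\omega^j$ is a common root of $f(x)$ and $x^k - 1$.

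The final step is to count the common roots. Since $x^k - 1$ has $k$ distinct roots in $\F'$, it is separable, so every root of $\gcd(f(x), x^k-1)$ is simple. Thus the number of indices $j$ with $\lambda_j = 0$ equals the number of distinct common roots of $f$ and $x^k-1$, which equals $\deg \gcd(f(x), x^k-1) = d$. Combining with the first step yields $\rank(C) = k - d$, as desired.

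I expect the main obstacle to be the bookkeeping in the first step: carefully justifying that rank is preserved under the field extension and that the multiplicities of the zero eigenvalues are accounted for correctly, so that $k - \#\{j : \lambda_j = 0\}$ genuinely computes the rank rather than merely the number of distinct nonzero eigenvalues. Because $C$ is diagonalizable, the geometric and algebraic multiplicity of the eigenvalue $0$ coincide, and the separability of $x^k-1$ guarantees the $\omega^j$ are distinct, so each common root contributes exactly one zero eigenvalue; this is what makes the count $\deg\gcd(f, x^k-1)$ rather than something larger. Stating these two facts explicitly is the crux, after which the conclusion follows immediately.
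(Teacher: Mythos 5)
Your proof is correct and follows essentially the same route as the paper's: both count the zero eigenvalues $\lambda_j=f(\omega^j)$ from Lemma \ref{lemma:circulant_eigen} using diagonalizability, and identify that count with $\deg\gcd(f(x),x^k-1)$ via the separability of $x^k-1$. If anything, you are more explicit than the paper about the details it leaves tacit (rank invariance under field extension, equality of algebraic and geometric multiplicity, and the index reconciliation $\lambda_j=f(\omega^{-j})$), which only strengthens the argument.
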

\begin{proof}

Let $d$ be the dimension of the null space of $C$ which is equal to the multiplicity of the eigenvalue $0$. An eigenvalue $\lambda_j=0$ if and only if $a_0+a_{k-1}\omega^j+\dots+a_1\omega^{(k-1)j}=f(\omega^j)=0$. Hence, the dimension of the null space is equal to the number of $k$th roots of unity which are also roots of $f(x)$. Therefore, $d$ is the degree of the polynomial $\gcd(f(x),x^k-1)$ and we obtain $\rank(C)=k-d$.
\end{proof}

\begin{lemma}\label{lemma:circulant_irreducible}
Suppose $p_1$ and $p_2$ are distinct prime integers and $p_1$ is a generator for the cyclic group $\F_{p_2}^\times$. Then $x^{p_2-1}+\dots+x+1$ is irreducible over $\F_{p_1}$.
\end{lemma}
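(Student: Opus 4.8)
The plan is to recognize $f(x)=x^{p_2-1}+\dots+x+1=\dfrac{x^{p_2}-1}{x-1}$ as the $p_2$-th cyclotomic polynomial and to prove irreducibility by computing the degree of the field generated by one of its roots. Since $p_1\ne p_2$, the polynomial $x^{p_2}-1$ is coprime to its derivative over $\F_{p_1}$, hence separable, so it has $p_2$ distinct roots in a suitable extension of $\F_{p_1}$. Because $p_2$ is prime, every root other than $1$ has exact multiplicative order $p_2$; that is, the $p_2-1$ roots of $f$ are precisely the primitive $p_2$-th roots of unity. Fix one such root $\zeta$ in $\overline{\F_{p_1}}$.

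First I would compute the degree $[\F_{p_1}(\zeta):\F_{p_1}]$. The subfield $\F_{p_1}(\zeta)$ equals $\F_{p_1^m}$ for the least $m$ with $\zeta\in\F_{p_1^m}$. Using the Frobenius description $\F_{p_1^m}=\{x\in\overline{\F_{p_1}}:x^{p_1^m}=x\}$, membership $\zeta\in\F_{p_1^m}$ holds exactly when $\zeta^{p_1^m}=\zeta$, i.e.\ when $\zeta^{p_1^m-1}=1$. Since $\zeta$ has order $p_2$, this is equivalent to $p_2\mid p_1^m-1$, that is, $p_1^m\equiv 1\pmod{p_2}$. Therefore the least admissible $m$ is exactly the multiplicative order of $p_1$ in the cyclic group $\F_{p_2}^\times$.

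Next I would invoke the hypothesis. Because $p_1$ is a generator of $\F_{p_2}^\times$, a cyclic group of order $p_2-1$, its order is precisely $p_2-1$, so $[\F_{p_1}(\zeta):\F_{p_1}]=p_2-1$. Consequently the minimal polynomial of $\zeta$ over $\F_{p_1}$ has degree $p_2-1$. This minimal polynomial divides $f$ (as $f(\zeta)=0$), both polynomials are monic, and $\deg f=p_2-1$ matches the degree of the minimal polynomial; hence they coincide and $f$ is irreducible over $\F_{p_1}$.

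The main obstacle is the middle step: identifying the field-extension degree $[\F_{p_1}(\zeta):\F_{p_1}]$ with the number-theoretic quantity $\mathrm{ord}_{p_2}(p_1)$. This is where the hypothesis that $p_1$ is a primitive root modulo $p_2$ is converted into the statement that $\zeta$ generates a degree-$(p_2-1)$ extension, forcing the minimal polynomial to exhaust all of $f$. The remaining ingredients---separability of $x^{p_2}-1$, primitivity of the nontrivial roots, and the final divisibility-plus-degree comparison---are routine.
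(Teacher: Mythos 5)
Your proof is correct and takes essentially the same approach as the paper's: both arguments convert the hypothesis that $p_1$ generates $\F_{p_2}^\times$ into the equality $[\F_{p_1}(\zeta):\F_{p_1}]=p_2-1$ via the Frobenius map (the paper phrases this as the order of the Frobenius generator of $\mathrm{Gal}(\F_{p_1}(\zeta)/\F_{p_1})$, you as the least $m$ with $\zeta^{p_1^m}=\zeta$, which is the same computation), and then conclude that the minimal polynomial of $\zeta$, having degree $p_2-1$ and dividing $x^{p_2-1}+\dots+x+1$, must equal it. The supporting details you supply --- separability of $x^{p_2}-1$ since $p_1\ne p_2$, and primitivity of every nontrivial $p_2$-th root of unity since $p_2$ is prime --- are routine and correctly handled.
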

\begin{proof}
Let $\omega$ be a primitive $p_2$th root of unity of $\F_{p_1}$. The group $\mathrm{Gal}(\F_{p_1}(\omega)\slash\F_{p_1})$ is generated by the Frobenius automorphism $\phi:x\mapsto x^{p_1}$ \cite[Proposition 5.8, page 445]{A09}. Since $p_1$ generates $\F_{p_2}^\times$, we see that $|\phi|=p_2-1$. Thus, $[\F_{p_1}(\omega):\F_{p_1}]=p_2-1$ and $x^{p_2-1}+\dots+x+1$ is irreducible over $\F_{p_1}$.
\end{proof}

\begin{corollary}\label{corollary:pq_irreducible}
Let $p_1$ and $p_2$ be primes such that $p_1$ is a generator for the cyclic group $\F_{p_2}^\times$. Suppose that $C$ is a $p_2\times p_2$ circulant matrix with entries in $\F_{p_1}$. Then $\rank(C)=0$, $1$, $p_2-1$, or $p_2$.
\end{corollary}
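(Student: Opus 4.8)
The plan is to combine Lemma \ref{lemma:circulant_rank} with the irreducibility result of Lemma \ref{lemma:circulant_irreducible}. Recall that Lemma \ref{lemma:circulant_rank} gives $\rank(C)=p_2-d$, where $d=\deg\gcd(f(x),x^{p_2}-1)$ and $f$ is the associated polynomial of $C$. Since the possible values of $\rank(C)$ are governed entirely by which degrees $d$ can occur, the corollary reduces to determining the degrees of the divisors of $x^{p_2}-1$ over $\F_{p_1}$.

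First I would observe that the hypothesis forces $p_1\ne p_2$: for $p_1$ to lie in $\F_{p_2}^\times$ we need $p_1\not\equiv 0\pmod{p_2}$. Consequently $p_2$ is a unit in $\F_{p_1}$, so the derivative $p_2 x^{p_2-1}$ of $x^{p_2}-1$ is nonzero and shares no root with $x^{p_2}-1$ (its only root $0$ is not a root of $x^{p_2}-1$). Hence $x^{p_2}-1$ is separable, and its splitting field is an algebraic extension of $\F_{p_1}$ containing $p_2$ distinct $p_2$-th roots of unity. This verifies the hypothesis needed to invoke Lemma \ref{lemma:circulant_rank}.

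Next I would factor $x^{p_2}-1=(x-1)(x^{p_2-1}+\dots+x+1)$ over $\F_{p_1}$. The factor $x-1$ is irreducible of degree $1$, and by Lemma \ref{lemma:circulant_irreducible} the second factor is irreducible of degree $p_2-1$. Thus $x^{p_2}-1$ is a product of exactly two distinct irreducible polynomials over $\F_{p_1}$. Since $\gcd(f(x),x^{p_2}-1)$ is a monic divisor of $x^{p_2}-1$, and a product of two distinct irreducibles has exactly four monic divisors (the constant $1$, each irreducible factor, and their product), its degree $d$ must lie in $\{0,1,p_2-1,p_2\}$. Applying $\rank(C)=p_2-d$ then yields $\rank(C)\in\{p_2,\,p_2-1,\,1,\,0\}$, as claimed.

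There is no substantive obstacle: all the real content has been front-loaded into Lemma \ref{lemma:circulant_irreducible}, which is what collapses the a priori $2^{\,(\text{number of factors})}$ possibilities down to exactly four. The only point requiring a moment's care is confirming the separability (equivalently, the ``$p_2$ distinct $p_2$-th roots of unity'') hypothesis of Lemma \ref{lemma:circulant_rank}, which follows cleanly from $p_1\ne p_2$.
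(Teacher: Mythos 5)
Your proposal is correct and follows essentially the same route as the paper: factor $x^{p_2}-1=(x-1)(x^{p_2-1}+\dots+x+1)$, invoke Lemma \ref{lemma:circulant_irreducible} for irreducibility of the second factor, and apply Lemma \ref{lemma:circulant_rank} to conclude $d\in\{0,1,p_2-1,p_2\}$. Your explicit verification that $p_1\ne p_2$ forces separability of $x^{p_2}-1$ (so the roots-of-unity hypothesis of Lemma \ref{lemma:circulant_rank} holds) is a detail the paper leaves implicit, but it does not change the argument.
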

\begin{proof}
By Lemma \ref{lemma:circulant_irreducible}, we know that $x^{p_2-1}+\dots+x+1$ is irreducible over $\F_{p_1}$. Hence $x^{p_2}-1$ factors as $(x-1)(x^{p_2-1}+\dots+x+1)$ over $\F_{p_1}$. By Lemma \ref{lemma:circulant_rank}, we see that $d=0$, $1$, $p_2-1$, or $p_2$ from which our result follows.
\end{proof}

\section{Results for $n=p$ Prime}\label{section:prime_results}
In Section \ref{section:snf}, we gave a description of the monodromy group in terms of the elementary divisors of a particular circulant matrix. Although this result (Theorem \ref{theorem:main2}) allows one to easily compute the monodromy group, the result is not explicit. We will prove several results below in the special case when $n$ is equal to a prime $p$. In other words, $[a_0,\dots,a_{k-1}]$ represent an algebraic $k$-gon modulo a prime $p$. In this case, the group $N$ can be viewed as a $\Z\slash p\Z=\Fp$ module  and is thus a vector space. In this section, $\Fp$ will denote the finite field with $p$ elements and $\Fp^\times$ will denote its group of units.


\begin{proposition}\label{proposition:main3}
Suppose that $[a_0,\dots,a_{k-1}]$ represents an algebraic $k$-gon modulo a prime $p$. Let $f(x)=a_0+a_1 x+\dots+a_{k-1}x^{k-1}$ and let $d$ be the degree of $\gcd(f(x),x^k-1)$. Then the monodromy group of $[a_0,\dots,a_{k-1}]$ is $G(a_0,\ldots,a_{k-1})=C_p^{k-d} \rtimes C_k$.
\end{proposition}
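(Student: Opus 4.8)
The plan is to combine the general structure theorem (Theorem \ref{theorem:main2}) with the rank computation for circulant matrices over a field (Lemma \ref{lemma:circulant_rank}). Since $n=p$ is prime, the ring $\ZnZ$ is the field $\Fp$, so the module $N$ generated by the columns of the circulant matrix $C$ is simply an $\Fp$-vector space, and its isomorphism type as an abelian group is completely determined by its dimension. The key observation is that the number of nontrivial cyclic factors in Theorem \ref{theorem:main2} equals the rank of $C$ over $\Fp$.

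First I would invoke Theorem \ref{theorem:main2}, which gives $N\cong\bigoplus_{i=1}^k C_{\delta_i}$ with $\delta_i=\frac{p}{\gcd(d_i,p)}$, where $d_1,\dots,d_k$ are the elementary divisors of $C$. Because $p$ is prime, each $\gcd(d_i,p)$ is either $1$ or $p$: it is $p$ exactly when $p\mid d_i$, which (viewing $C$ over $\Fp$) corresponds to the elementary divisor vanishing modulo $p$, and it is $1$ otherwise. Thus each factor $C_{\delta_i}$ is either $C_p$ (when $p\nmid d_i$) or trivial (when $p\mid d_i$). Consequently $N\cong C_p^{\,r}$ where $r$ is the number of elementary divisors of $C$ that are nonzero modulo $p$, which is precisely $\rank_{\Fp}(C)$.

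Next I would apply Lemma \ref{lemma:circulant_rank}. The associated polynomial of $C$ is exactly $f(x)=a_0+a_1x+\dots+a_{k-1}x^{k-1}$, and the lemma states $\rank(C)=k-d$ where $d=\deg\gcd(f(x),x^k-1)$. The field $\Fp$ has an algebraic extension (namely a splitting field of $x^k-1$) containing $k$ distinct $k$th roots of unity, so the hypothesis of Lemma \ref{lemma:circulant_rank} is satisfied as long as we are computing the gcd over $\Fp$; one should note the roots of unity live in an extension but the gcd itself is taken over $\Fp$, which is fine since $x^k-1$ and $f(x)$ both have coefficients in $\Fp$. Combining, $N\cong C_p^{\,k-d}$, and the semidirect product structure $N\rtimes C_k$ from Theorem \ref{theorem:main2} gives $G(a_0,\dots,a_{k-1})=C_p^{\,k-d}\rtimes C_k$, as desired.

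The main technical point to handle carefully is the bridge between the $\Z$-elementary divisors appearing in Theorem \ref{theorem:main2} and the $\Fp$-rank appearing in Lemma \ref{lemma:circulant_rank}. I would make explicit that the number of elementary divisors of $C$ not divisible by $p$ equals the rank of the reduction $\overline{C}$ over $\Fp$: reducing the Smith Normal Form $C=UDV$ modulo $p$ keeps $\overline U,\overline V$ invertible (they have determinant $\pm1$), so $\rank_{\Fp}(\overline C)=\rank_{\Fp}(\overline D)$, and $\overline D$ is diagonal with exactly the $d_i$ coprime to $p$ surviving as units. This is a routine but essential consistency check; everything else is direct substitution into the two cited results.
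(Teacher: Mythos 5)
Your argument is correct and rests on the same crux as the paper's proof: Lemma \ref{lemma:circulant_rank} applied to the associated polynomial $f$ gives $\rank_{\Fp}(C)=k-d$, and the semidirect product structure does the rest. The paper reaches the conclusion more directly than you do --- since $p$ is prime, $N$ is by definition the column span of $C$ over the field $\Fp$, an $\Fp$-vector space whose group structure is determined by its dimension $\rank(C)$, so your detour through the integer Smith Normal Form of Theorem \ref{theorem:main2} and the (correct) reduction-mod-$p$ bridge between $\Z$-elementary divisors and $\Fp$-rank is valid but unnecessary; note also that your parenthetical claim that a splitting field of $x^k-1$ over $\Fp$ contains $k$ distinct $k$th roots of unity is false when $p\mid k$, a restriction on Lemma \ref{lemma:circulant_rank} that the paper's proof silently shares (cf.\ the remark following Theorem \ref{theorem:monodromy_groups}).
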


\begin{proof}
By Lemma \ref{lemma:circulant_rank}, we know that the rank of the matrix 
$$
C=\begin{bmatrix}
a_0&a_{k-1}&\dots& a_2 &a_1\\
a_1& a_0&a_{k-1}& &a_2\\
\vdots& a_1&a_0&\ddots & \vdots\\
a_{k-2}&  & \ddots &\ddots&a_{k-1}\\
a_{k-1}&a_{k-2}&\dots&a_1&a_0
\end{bmatrix}
$$
is equal to $k-d$ where $d$ is the degree of $\gcd(f(x),x^k-1)$. The rank of a subspace of a vector space determines the group structure and the result follows.
\end{proof}

This allows us to translate the problem of finding the rank of a matrix to that of a degree of a gcd. The following corollary shows how we can use this connection to compute the monodromy groups of a large collection of dessins on rational billiards surfaces.

\begin{corollary}\label{corollary:full_monodromy}
Suppose $p_2$ is a prime number and $p_1$ is a prime number that generates the cyclic group $(\F_{p_2})^\times$. Suppose that $[a_0,\dots,a_{p_2-1}]$ represents an algebraic $p_2$-gon modulo $p_1$ with monodromy group $G(a_0,\ldots,a_{p_2-1})$. Let $f(x)=a_0+a_1 x+\dots+a_{p_2-1}x^{p_2-1}$, then $G(a_0,\ldots,a_{p_2-1})\cong C_{p_1}^{p_2-1} \rtimes C_{p_2}$.
\end{corollary}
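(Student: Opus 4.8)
The plan is to combine Proposition \ref{proposition:main3} with the irreducibility result of Corollary \ref{corollary:pq_irreducible}, pinning down the exact degree $d$ of $\gcd(f(x),x^{p_2}-1)$ over $\F_{p_1}$ using the two defining conditions of an algebraic polygon. By Proposition \ref{proposition:main3}, the monodromy group is $C_{p_1}^{p_2-d}\rtimes C_{p_2}$, so the whole problem reduces to showing $d=1$.

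First I would record the factorization furnished by Lemma \ref{lemma:circulant_irreducible}: since $p_1$ generates $\F_{p_2}^\times$, the polynomial $\Phi(x)=x^{p_2-1}+\dots+x+1$ is irreducible over $\F_{p_1}$, and hence $x^{p_2}-1=(x-1)\Phi(x)$ is the factorization into irreducibles. Consequently the only monic divisors of $x^{p_2}-1$ are $1$, $x-1$, $\Phi(x)$, and $x^{p_2}-1$, so $\gcd(f(x),x^{p_2}-1)$ must be one of these four, giving $d\in\{0,1,p_2-1,p_2\}$ (this is exactly the content of Corollary \ref{corollary:pq_irreducible}).

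Next I would use the first algebraic-polygon condition, $a_0+\dots+a_{p_2-1}\equiv 0\bmod p_1$, to observe that $f(1)=a_0+\dots+a_{p_2-1}=0$ in $\F_{p_1}$. Therefore $(x-1)\mid f(x)$, which forces $(x-1)\mid\gcd(f(x),x^{p_2}-1)$ and hence $d\ge 1$. Combined with the list above, this leaves only $d=1$ or $d=p_2$.

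Finally I would rule out $d=p_2$: this would mean $\gcd(f(x),x^{p_2}-1)=x^{p_2}-1$, i.e. $(x^{p_2}-1)\mid f(x)$. Since $f$ has degree at most $p_2-1<p_2$, this is only possible if $f(x)=0$ in $\F_{p_1}[x]$, i.e. $a_i\equiv 0\bmod p_1$ for every $i$; but that contradicts the second defining condition $\gcd(a_0,\dots,a_{p_2-1},p_1)=1$ of an algebraic polygon. Hence $d=1$, so $p_2-d=p_2-1$, and Proposition \ref{proposition:main3} yields $G(a_0,\ldots,a_{p_2-1})\cong C_{p_1}^{p_2-1}\rtimes C_{p_2}$. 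There is no serious obstacle here; the only point requiring care is the elimination of the $d=p_2$ case, which relies precisely on the degree bound for $f$ together with the coprimality condition, so I would make sure to invoke both algebraic-polygon hypotheses explicitly.
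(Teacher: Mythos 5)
Your proof is correct and takes essentially the same route as the paper's: both reduce the problem via Proposition \ref{proposition:main3} to computing $d=\deg\gcd(f(x),x^{p_2}-1)$, use the irreducibility of $x^{p_2-1}+\dots+x+1$ (Lemma \ref{lemma:circulant_irreducible}, packaged as Corollary \ref{corollary:pq_irreducible}) to restrict the possible values of $d$, force $d\ge 1$ from $f(1)\equiv 0 \bmod p_1$, and rule out $d=p_2$ by the degree bound $\deg f\le p_2-1$ together with $\gcd(a_0,\dots,a_{p_2-1},p_1)=1$. The one implicit step in your elimination of $d=p_2-1$ — that the gcd, being divisible by $x-1$, cannot equal $\Phi(x)$ itself because $\Phi(1)=p_2\not\equiv 0\bmod p_1$ — is equally elided in the paper's proof.
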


\begin{proof}
By Corollary \ref{corollary:pq_irreducible}, the rank of the appropriate matrix $C$ is $0$, $1$, $p_2-1$, or $p_2$. Since $f(1)\equiv 0\mod p_1$, we know $x-1|f(x)$ and thus $\rank(C)\le p_2-1$. Since $x^{p_2-1}+\dots+x+1$ is irreducible over $\F_{p_1}$ by Lemma \ref{lemma:circulant_irreducible}, the $\deg(\gcd(f(x),x^{p_2}-1))=1$ or $p_2$. If $\deg(\gcd(f(x),x^{p_2}-1))=p_2$ then $a_0=\dots=a_{p_2-1}=0$ since $\deg(f)\le p_2-1$, which is a contradiction. Hence, $\deg(\gcd(f(x),x^{p_2}-1))=1$ and the result follows.
\end{proof}

\begin{example}
Choose $p_2=17$. Observe that $p_1=41$ generates the multiplicative group $\F_{17}^\times$. Hence, any algebraic $17$-gon modulo $41$ has monodromy group $C_{41}^{16}\rtimes C_{17}$.
\end{example}

\subsection{Possible Monodromy Groups}

Now, let's prove a general theorem that lists all possible monodromy groups for polygons $[a_0,\dots,a_{k-1}]$ modulo $p$.

\begin{proposition}\label{proposition:possible_groups}
Suppose that $[a_0,\dots,a_{k-1}]$ represents an algebraic polygon modulo a prime $p$. Let $f(x)=a_0+a_1 x+\dots+a_{k-1}x^{k-1}$ and suppose $x^k-1=\prod{g_i(x)}$ where the $g_i(x)$ are irreducible over $\Fp$. Further suppose that $\gcd(f(x),x^k-1)=\displaystyle\prod_{j=1}^{\ell}{g_{i_j}(x)}$. Then the monodromy group of $[a_0,\dots,a_{k-1}]$ is $G(a_0,\ldots,a_{k-1})=C_p^{k-d} \rtimes C_k$ where $d=\displaystyle\sum_{j=1}^{m}{\deg(g_{i_j}(x))}$.

\end{proposition}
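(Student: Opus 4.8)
The plan is to derive this statement directly from Proposition \ref{proposition:main3}, of which it is essentially a refinement: the only new content is to re-express the quantity $d=\deg(\gcd(f(x),x^k-1))$ in terms of the irreducible factorization of $x^k-1$ over $\Fp$.

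First I would invoke Proposition \ref{proposition:main3} verbatim. Since $[a_0,\dots,a_{k-1}]$ is an algebraic polygon modulo the prime $p$ with associated polynomial $f(x)=a_0+a_1x+\dots+a_{k-1}x^{k-1}$, that proposition gives $G(a_0,\ldots,a_{k-1})=C_p^{k-d}\rtimes C_k$ where $d=\deg(\gcd(f(x),x^k-1))$. Thus the entire content reduces to computing this degree from the hypothesized factorization. For the second step, I would use that $\Fp[x]$ is an integral domain, so the degree map is additive on products; applying this to $\gcd(f(x),x^k-1)=\prod_{j=1}^{\ell}g_{i_j}(x)$ yields
$$
d=\deg\left(\prod_{j=1}^{\ell}g_{i_j}(x)\right)=\sum_{j=1}^{\ell}\deg(g_{i_j}(x)),
$$
which is exactly the claimed expression for $d$. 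Substituting back into the conclusion of Proposition \ref{proposition:main3} completes the argument.

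Since this is a direct corollary, there is no genuine obstacle; the one point I would verify carefully is that the hypothesis ``$\gcd(f(x),x^k-1)=\prod_{j=1}^{\ell}g_{i_j}(x)$'' is consistent with the factorization $x^k-1=\prod g_i(x)$, i.e.\ that the gcd really is a subproduct (with appropriate multiplicities) of the $g_i$. This follows because $\Fp[x]$ is a unique factorization domain: any divisor of $x^k-1$ has an irreducible factorization whose factors are associates of a sub-collection of the $g_i$, so writing the gcd in the form $\prod_{j}g_{i_j}(x)$ loses no generality. I would also remark that the index discrepancy between the product (up to $\ell$) and the sum (up to $m$) in the statement should be reconciled to a single index, since they count the same irreducible factors appearing in the gcd.
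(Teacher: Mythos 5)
Your proposal is correct and matches the paper's own proof, which likewise just invokes Proposition \ref{proposition:main3} and observes that the gcd of $f(x)$ and $x^k-1$ must be a product of a subset of the irreducible factors $g_i(x)$, so that $d$ is the corresponding sum of degrees; your explicit appeal to unique factorization in $\Fp[x]$ and your note that the indices $\ell$ and $m$ should coincide are minor (and welcome) tightenings of the same argument. The only detail the paper adds that you omit is the incidental observation that $\ell\ge 1$, since $f(1)=a_0+\dots+a_{k-1}\equiv 0 \bmod p$ forces $x-1\mid f(x)$, but this is not needed for the stated conclusion.
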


In essence, Proposition \ref{proposition:possible_groups} gives a list of all potential monodromy groups of \emph{algebraic} $k$-gons modulo $p$. If $[a_0,\dots,a_{k-1}]$ is an algebraic $k$-gon modulo $p$ with monodromy group $C_p^{k-d}\rtimes C_k$ then $d$ must be equal to the sum of degrees of distinct irreducible factors of $x^k-1$ in $\Fp$. The factor $x-1$ must be one of these factors. If there is no way to add up to $d$ the degrees $\deg(g_{i_j}(x))$ of a subset of the irreducible factors $g_i(x)$ of $x^k-1$ in $\Fp$, then such a monodromy group \emph{cannot} occur.

\begin{example}
Consider $k=3$ and $p=5$. We see that $x^3-1$ factors as $(x-1)(x^2+x+1)$ modulo $5$. Since $x-1$ is required to be a factor of $\gcd(f(x),x^k-1)$, we see that this gcd \emph{cannot} have degree two. Therefore, the monodromy group $C_5^{3-2}\rtimes C_3$ is not achieved by any algebraic $3$-gon modulo $5$.
\end{example}

\begin{proof}[Proof of Proposition \ref{proposition:possible_groups}]
By Proposition \ref{proposition:main3}, we know that $d$ is the degree of $\gcd(f(x),x^k-1)$. Since the gcd must be a product of some subset of $\{g_i(x)\}$, we see that $d$ is the sum of the degrees of some subset of $\{g_i(x)\}$. The theorem follows.

Observe that $\ell\ge 1$ because $f(1)=a_0+\dots a_{k-1}\equiv 0\mod p$ implies $x-1$ divides $f(x)$. 
\end{proof}

\begin{theorem}\label{theorem:algebraic_monodromy_groups}
Fix a prime $p\nmid k$. Suppose $x^k-1=\prod{g_i(x)}$ where the $g_i(x)$ are irreducible over $\Fp$. Let $d=\displaystyle\sum_{j=1}^{\ell}{\deg(g_{i_j}(x))}$. Further suppose that $g_{i_j}=x-1$ for some $i_j$. Then there exists an \emph{algebraic} $k$-gon $[a_0,\dots,a_{k-1}]$ modulo $p$ with monodromy group $G(a_0,\ldots,a_{k-1})\cong C_p^{k-d} \rtimes C_k$.
\end{theorem}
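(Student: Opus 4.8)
The plan is to turn the divisibility bookkeeping of Proposition \ref{proposition:main3} around: rather than starting from a polygon and computing its gcd, I would start from the prescribed product of factors and manufacture a polynomial having exactly that gcd with $x^k-1$, then read its coefficients back as polygon entries. Set $D(x)=\prod_{j=1}^{\ell}g_{i_j}(x)$, the product of the selected irreducible factors, and define $f(x)=D(x)$. I then let $[a_0,\dots,a_{k-1}]$ be the coefficient sequence of $f$, taking $a_i\in\{0,1,\dots,p-1\}$ to be the integer representative of the coefficient of $x^i$ in $D(x)$ (with $a_i=0$ for $d<i\le k-1$).

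The core computation is that $\gcd(f(x),x^k-1)=D(x)$. Because $p\nmid k$, the polynomial $x^k-1$ is separable over $\Fp$---its formal derivative $kx^{k-1}$ has no root in common with it---so the factorization $x^k-1=\prod g_i(x)$ consists of pairwise distinct irreducibles. Hence $D(x)$ is an honest divisor of $x^k-1$, and since $f=D$ divides $x^k-1$ we get $\gcd(f,x^k-1)=D$ immediately. Distinctness of the $g_i$ also guarantees that $\deg D=\sum_{j=1}^{\ell}\deg g_{i_j}=d$, so $\deg\gcd(f,x^k-1)=d$.

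Next I would confirm that $[a_0,\dots,a_{k-1}]$ satisfies the two defining conditions of an algebraic $k$-gon modulo $p$. The sum condition is $a_0+\dots+a_{k-1}\equiv f(1)\equiv D(1)\equiv 0\pmod p$, which holds precisely because $x-1$ was assumed to be one of the chosen factors, so $(x-1)\mid D$ and $D(1)=0$. The coprimality condition $\gcd(a_0,\dots,a_{k-1},p)=1$ holds because $D\ne 0$, so at least one coefficient is a unit modulo $p$. I would also record the degree bound $d=\deg D\le k-1$: since the distinct factors include $x-1$ and their total degree is $k$, omitting at least one factor forces $d<k$, and in fact $d=k$ cannot be realized at all (it would require $f\equiv 0$), so the statement is meaningful exactly in the range $0\le d\le k-1$.

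With these verifications, Proposition \ref{proposition:main3} applies directly: the monodromy group of $[a_0,\dots,a_{k-1}]$ is $C_p^{k-d}\rtimes C_k$, as desired. I expect the only delicate point to be the role of the hypothesis $p\nmid k$---it is exactly what makes $x^k-1$ separable and thus legitimizes both the phrase ``subset of the irreducible factors'' and the identity $\deg D=d$; the remaining steps are routine verifications of the polygon axioms and a direct appeal to the earlier gcd-to-group-structure dictionary.
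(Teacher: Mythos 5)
Your proposal is correct and is essentially identical to the paper's own proof: both take $f(x)=\prod_j g_{i_j}(x)$, verify the sum condition via $(x-1)\mid f$ and the coprimality condition via $f\not\equiv 0$, and conclude by Proposition \ref{proposition:main3}. Your added remarks on separability of $x^k-1$ (from $p\nmid k$) and on the impossibility of $d=k$ are sound clarifications of points the paper leaves implicit, but they do not change the argument.
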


\begin{proof}
Let $f(x)=\displaystyle\prod{g_{i_j}(x)}$. We see that $\deg(\gcd(f(x),x^k-1))=d$. If $f(x)=a_0+\dots +a_{k-1}x^{k-1}$ then $a_0+\dots+a_{k-1}\equiv 0\mod p$ since $(x-1)|f(x)$. Since $f(x)$ is not the zero polynomial over $\Fp$, we see that $\gcd(a_0,\dots,a_{k-1},p)=1$. Therefore, $[a_0,\dots,a_{k-1}]$ is an algebraic $k$-gon with monodromy group $G(a_0,\ldots,a_{k-1})\cong C_p^{k-d} \rtimes C_k$ by Proposition \ref{proposition:main3}.
\end{proof}

Theorem \ref{theorem:algebraic_monodromy_groups} proves that all possible monodromy groups from Proposition \ref{proposition:possible_groups} are achieved by algebraic polygons modulo $p$ for a fixed prime $p$. Therefore, it is natural to ask which groups can occur for \emph{$k$-gons} modulo $p$. The following theorem shows that for primes $p>k$, all possible monodromy groups from Proposition \ref{proposition:possible_groups} are achieved by {\it $k$-gons} modulo $p$.

\begin{theorem}\label{theorem:monodromy_groups}
Fix a prime $p>k\ge 3$. Suppose $x^k-1=g_1(x)\cdots g_\ell(x)$ where the $g_i(x)$ are irreducible over $\Fp$. Let $d=\displaystyle\sum_{j=1}^{m}{\deg(g_{i_j}(x))}$ where $m$ is a positive integer less than $\ell$ and $1\le i_1<\dots<i_m\le \ell$. Further suppose that $g_{i_j}=x-1$ for some $i_j$. Then there exists a $k$-gon $[a_0,\dots,a_{k-1}]$ modulo $p$ with monodromy group $G(a_0,\ldots,a_{k-1})\cong C_p^{k-d} \rtimes C_k$.
\end{theorem}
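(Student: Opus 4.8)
The plan is to produce an \emph{algebraic} $k$-gon modulo $p$ that has the required monodromy group and, crucially, has \emph{all entries nonzero}, and then to promote it to a genuine $k$-gon via the associate machinery. By Proposition \ref{proposition:main3} the monodromy group of an algebraic $k$-gon $[a_0,\dots,a_{k-1}]$ modulo $p$ depends only on $d'=\deg\gcd(f,x^k-1)$, where $f(x)=a_0+a_1x+\dots+a_{k-1}x^{k-1}$, and equals $C_p^{\,k-d'}\rtimes C_k$. Hence it suffices to exhibit an $f$ with (i) every coefficient nonzero, (ii) $(x-1)\mid f$ (so $f(1)=0$, i.e. $a_0+\dots+a_{k-1}\equiv 0 \bmod p$, making $[a_0,\dots,a_{k-1}]$ an algebraic polygon), and (iii) $\deg\gcd(f,x^k-1)=d$. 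Given such an $f$, Lemma \ref{lemma:associate} applies (note $p>k\ge k-1$) and yields an associate \emph{convex} $k$-gon $[b_0,\dots,b_{k-1}]$ modulo $p$; by Lemma \ref{lemma:associate_groups} its monodromy group equals that of $[a_0,\dots,a_{k-1}]$, namely $C_p^{\,k-d}\rtimes C_k$. Thus the theorem reduces to the existence of an $f$ satisfying (i)--(iii).

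The engine is that full-support polynomials are abundant inside cyclic codes. Fix any divisor $g\mid x^k-1$ with $(x-1)\mid g$ and $\deg g=d$, and view $\langle g\rangle=g\cdot\Fp[x]/(x^k-1)$ as a cyclic code of dimension $k-d$ over $\Fp$. For a fixed coordinate $i$, the codewords vanishing in position $i$ form a subspace; if this subspace were all of $\langle g\rangle$, cyclic (shift) invariance would force \emph{every} coordinate to vanish identically, hence $\langle g\rangle=0$, a contradiction. So each coordinate vanishes on exactly $p^{\,k-d-1}$ codewords, and a union bound shows the number of \emph{full-weight} codewords (all $k$ coordinates nonzero) is at least
\[
p^{\,k-d}-k\,p^{\,k-d-1}=p^{\,k-d-1}(p-k)>0,
\]
positive precisely because $p>k$. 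Any such codeword is a multiple of $g$, so it satisfies (i) and (ii) and has $\deg\gcd(f,x^k-1)\ge d$; this is the one place where the hypothesis $p>k$ is used.

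It remains to force \emph{equality} in (iii): to find a full-weight multiple of $g$ that is coprime to $h=(x^k-1)/g$, and this is the crux. When $h$ is irreducible the only non-coprime multiple of $g$ is $0$, so any full-weight codeword above already works. In general a full-weight codeword may lie in a proper subcode $\langle g\,g_j\rangle$, inflating the gcd-degree, and this cannot always be repaired for a \emph{fixed} $g$: for $k=4$, $p=5$ one checks directly that no full-support $f$ has $\gcd(f,x^4-1)=(x-1)(x-2)$, even though the group $C_5^{2}\rtimes C_4$ is realized via $(x-1)(x-4)$. The way out is that the theorem prescribes only the \emph{degree} $d$, so we are free to choose which Galois-stable set of $k$-th roots of unity $f$ vanishes on — equivalently, which degree-$d$ divisor $g\supseteq(x-1)$ to use. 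I expect the genuinely hard part to be showing that for \emph{some} admissible $g$ the abundance estimate survives intersection with the coprimality condition for every $p>k$. I would carry this out in the spectral (discrete Fourier) picture of $\langle g\rangle$, where the coefficients are $a_i=\tfrac1k\sum_{\zeta}w_\zeta\zeta^{-i}$ with $w$ supported on the roots of $h$: the coordinate-vanishing conditions become $k$ hyperplanes whose normals $(\zeta^{-i})_\zeta$ sit in Vandermonde general position, and I would count full-support spectra $w$ using this general-position structure together with the freedom in choosing the vanishing orbits, rather than the naive union bound (which is too lossy once $p<2k$).
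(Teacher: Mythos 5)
Your reduction (find a full-support \(f\) with \((x-1)\mid f\) and \(\deg\gcd(f,x^k-1)=d\), then invoke Lemmas \ref{lemma:associate} and \ref{lemma:associate_groups} with Proposition \ref{proposition:main3}) is exactly the paper's strategy, and your cyclic-code union bound is a clean, correct way to get full-weight multiples of a fixed degree-\(d\) divisor \(g\): the shift-invariance argument that each coordinate-vanishing subspace is proper is right, and the count \(p^{k-d}-kp^{k-d-1}=p^{k-d-1}(p-k)>0\) is valid for \(p>k\). Your diagnosis of the crux is also accurate — I verified your \(k=4\), \(p=5\) example: every full-support cubic vanishing at \(1\) and \(2\) but not at \(3,4\) would have to be \(c(x-1)^2(x-2)\) or \(c(x-1)(x-2)^2\), and both have a zero coefficient mod \(5\), while \((x-1)^2(x-4)=x^3+4x^2+4x+1\) does realize \(d=2\). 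But this is precisely where the proof stops being a proof. Step (iii) — equality \(\deg\gcd(f,x^k-1)=d\) for \emph{some} admissible choice of \(g\) — is stated as an expectation with a Fourier-analytic plan, not established. The natural repair within your framework, inclusion--exclusion against the subcodes \(\langle g\,g_j\rangle\) for the irreducible factors \(g_j\) of \(h=(x^k-1)/g\), yields positivity only when \(p>k+(\text{number of irreducible factors of }h)\); when \(p=k+1\) the polynomial \(x^k-1\) splits completely, \(h\) has \(k-d\) linear factors, and the bound fails for small \(d\) — exactly the regime you flag as ``too lossy once \(p<2k\).'' So the theorem is unproven for the boundary primes, and the freedom-of-\(g\) idea that must rescue it is never executed.

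For comparison, the paper closes this gap constructively rather than by counting. For \(p>k+1\) it builds \(f\) greedily: Proposition \ref{proposition:zero_spread} bounds the zero-gaps of \(g\), and Proposition \ref{proposition:close_zero_gap} multiplies by linear factors \((x-\alpha)\) with \(\alpha\) nonzero and not a root of \(h\), shrinking the gap statistic \(w\) by one each time; the root-avoidance keeps the gcd exactly \(g\), and counting eligible \(\alpha\)'s gives the hypothesis \(p>k+M\) of Theorem \ref{theorem:rank}, which Corollary \ref{corollary:monodromy_groups} verifies via \(M\le\gcd(p-1,k)-1\). For \(p=k+1\) the paper exploits exactly the freedom you identified — choosing \emph{which} degree-\(d\) divisor to realize — through three bespoke constructions: Proposition \ref{proposition:reducing_rank} (\(d=k/2\), via \((x^d-1)^{k/d-1}(x^{d-1}+\cdots+1)\)), Proposition \ref{proposition:small_d} (\(d<k/2\), multiplying a power \((x-1)^{k/2-d}\) by a rootless \(x^{k/2}-a\) from Lemma \ref{lemma:deg_d_polynomial}), and Proposition \ref{proposition:large_d} (\(d>k/2\), using \(x^{k/2}+1\) and drawing the auxiliary roots \(\beta_j\) from \(S\cup T\) so they cannot enlarge the gcd). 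Your spectral/Vandermonde program might well succeed and would be a genuinely different argument, but as written the proposal proves the theorem only when the inclusion--exclusion margin is positive, and defers the hard cases; it is an incomplete proof, not a complete alternative.
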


\begin{remark}
We only consider primes $p>k$ in Theorem \ref{theorem:monodromy_groups}, because $p\nmid k$ in this case. The polynomial, $x^k-1$, has no repeated factors over $\Fp$ when $p\nmid k$ which implies that there is an algebraic extension of $\Fp$ with $k$ distinct $k$th roots of unity. Furthermore, Theorem \ref{theorem:monodromy_groups} is not true for primes $p\le k$ in its current formulation. Consider $k=3$ and $p=3$. Since $x^3-1=(x-1)^3$ modulo $3$, Theorem \ref{theorem:monodromy_groups} would predict the existence of $3$-gons with monodromy groups $C_3^2\rtimes C_3$ and $C_3\rtimes C_3$. However, the only $3$-gon is $[1,1,1]$, and thus the only possible monodromy group of a $3$-gon modulo $3$ is $C_3\rtimes C_3$.
\end{remark}

\section{Proving Theorem \ref{theorem:monodromy_groups}}
\label{section:proving_monodromy_groups}
Here we lay out the basic strategy and supporting lemmas we will use to prove Theorem \ref{theorem:monodromy_groups}.

\subsection{Strategy for Proving Theorem \ref{theorem:monodromy_groups}}

Recall that Lemmas 8 and 9 allow us to construct a geometric polygon with monodromy group $G$ if we can find an algebraic polygon with all nonzero entries that has an isomorphic monodromy group. To control the number of nonzero entries in an algebraic polygon, we define:

\begin{definition}
For a polynomial $f(x)=a_nx^n+\dots+a_1x+a_0$ with $a_n\ne 0$, let $w(f(x))$ be the maximum number of consecutive coefficients of $f(x)$ that are zero. For example, if $g(x)=x^7-x^3+1$, then $w(g(x))=3$ since $a_6=a_5=a_4=0$ while $a_3,a_7\ne 0$.
\end{definition}

Now, our strategy for proving Theorem \ref{theorem:monodromy_groups} is the following:
\begin{enumerate}
    \item For a given monodromy group $G\cong C_p^{k-d}\rtimes C_k$ described in Theorem \ref{theorem:monodromy_groups}, find an appropriate polynomial $g(x)$ satisfying $g(x)|x^k-1$, $x-1|g(x)$, and $\deg(g(x))=d$. 
    \item Using Proposition \ref{proposition:close_zero_gap}, multiply $g(x)$ by a series of linear polynomials to produce a polynomial $f(x)$, each of which reduces the value of the $w$ function but leaves $\gcd(f(x),x^k-1)=g(x)$. Repeat until $g(x)$ has been transformed into a polynomial $f(x)=\sum b_ix^i$ of degree $k-1$ with $w(f(x))=0$ and $\gcd(f(x),x^k-1)=g(x)$.
    \item Use Lemmas \ref{lemma:associate} and \ref{lemma:associate_groups} to transform $[b_0,\ldots,b_{k-1}]$ into a geometric polygon with monodromy group $G$.
\end{enumerate}

\begin{remark}
    The proofs of Theorem \ref{theorem:rank} and Proposition \ref{proposition:small_d} follow the above approach. However, the proof of Proposition \ref{proposition:large_d} differs slightly.
\end{remark}

In the following proposition, we show that if we choose $\alpha$ appropriately, then $w((x-\alpha)\cdot f(x))=\max(w(f(x))-1,0)$.

\begin{proposition}\label{proposition:close_zero_gap}
Let $\F$ be a field. Suppose that $f(x)=a_nx^n+a_{n-1}x^{n-1}+\dots +a_1 x+a_0\in\F[x]$ with $a_0,a_n\ne 0$. If $\{\alpha_0,\dots,\alpha_n\}$ are distinct non-zero elements of $\F$, then there exists at least one $\alpha_i$ such that $w(f(x)\cdot (x-\alpha_i))=\max(w(f(x))-1,0)$.
\end{proposition}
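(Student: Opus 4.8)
The plan is to track how each coefficient of $f$ is transformed under multiplication by $(x-\alpha)$ and to show that, for all but finitely many nonzero $\alpha$, this transformation shrinks every maximal block of consecutive zero coefficients by exactly one while creating no new zeros. Writing $f(x)=\sum_{j=0}^n a_j x^j$ and $(x-\alpha)f(x)=\sum_{i=0}^{n+1} b_i x^i$, the computation to record first is the coefficient formula $b_i=a_{i-1}-\alpha a_i$, under the convention $a_{-1}=a_{n+1}=0$. In particular $b_0=-\alpha a_0\ne 0$ and $b_{n+1}=a_n\ne 0$, since $\alpha,a_0,a_n$ are all nonzero; hence the product again has nonzero lowest and highest coefficients, so $w$ is well-defined and is computed over the corresponding interval of indices.

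First I would classify the zero coefficients of the product. From $b_i=a_{i-1}-\alpha a_i$ one sees that $b_i=0$ can occur in only two ways: either $a_{i-1}=a_i=0$, a \emph{structural} zero coming from the interior of a zero block of $f$ and independent of $\alpha$, or $a_{i-1},a_i$ are both nonzero and $\alpha=a_{i-1}/a_i$, an \emph{accidental} zero. Call $\alpha$ \emph{bad} if it produces an accidental zero, i.e.\ $\alpha=a_{i-1}/a_i$ for some index $i$ with $a_{i-1},a_i\ne 0$. As $i$ ranges only over $\{1,\dots,n\}$, there are at most $n$ such ratios, so at most $n$ bad values occur in $\F$.

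Next I would invoke pigeonhole: the hypothesis supplies $n+1$ distinct nonzero elements $\alpha_0,\dots,\alpha_n$, and since at most $n$ of them can be bad, at least one $\alpha_i$ is good. For this good $\alpha_i$ no accidental zeros arise, so $b_j=0$ if and only if $a_{j-1}=a_j=0$. It then follows that the maximal blocks of consecutive zeros of the product are precisely the maximal zero blocks of $f$ of length at least $2$, each shortened by one: a length-one zero block of $f$ disappears entirely, while a block of length $r\ge 2$ becomes a block of length $r-1$ flanked by nonzero $b$'s, and distinct blocks cannot merge because the nonzero coefficient separating two maximal blocks of $f$ prevents two consecutive zeros of the product in the gap. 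Taking the maximum over all blocks yields $w\bigl((x-\alpha_i)f(x)\bigr)=\max(w(f(x))-1,0)$.

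The block bookkeeping in the last step is routine once the structural/accidental dichotomy is set up; the point requiring the most care, and where I expect the only real obstacle to lie, is the counting of bad values. One must verify that each accidental zero pins down a single determined value of $\alpha$ and that these number at most $n$, so that the $n+1$ given candidates are guaranteed to contain a good one, while the boundary indices $i=0$ and $i=n+1$ are excluded from the count precisely because $b_0$ and $b_{n+1}$ are automatically nonzero for every nonzero $\alpha$.
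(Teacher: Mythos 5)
Your proposal is correct and follows essentially the same route as the paper's proof: the coefficient formula $b_i=a_{i-1}-\alpha a_i$, the structural/accidental dichotomy for zeros (with $\alpha\ne 0$ ruling out the mixed case), a pigeonhole argument over the at most $n$ bad ratios $a_{i-1}/a_i$, and the observation that each maximal zero block shrinks by exactly one. If anything, your bookkeeping is slightly more complete than the paper's, since you explicitly verify that blocks cannot merge and that the bad ratios are confined to interior indices, points the paper leaves implicit.
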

\begin{proof}
Consider the coefficients of $f(x)\cdot (x-\alpha_i)=b_{n+1}x^{n+1}+b_nx^n+\dots+b_1 x+b_0$. Observe that $b_0,b_{n+1}\ne 0$. Further observe that for $0<j<n+1$, $b_j=a_{j-1}-\alpha_i a_j$. If $b_j=0$ then one of three situations must arise:
\begin{enumerate}[(a)]
    \item $a_{j-1}=a_j=0$
    \item $a_{j-1}=\alpha_i=0$
    \item $\alpha_i=\frac{a_{j-1}}{a_j}$ and $a_j\ne 0$
\end{enumerate}
Situation (b) cannot arise, because $\alpha_i$ is chosen from non-zero elements of $\F$. By the pigeon hole principle, there exists at least one $\alpha_i$ in $\{\alpha_0,\dots,\alpha_n\}$ such that $\alpha_i\ne \frac{a_{j-1}}{a_j}$ for all $0\le j\le n$. Our choice of $\alpha_i$ prevents situation (c) from arising. Since situation (a) cannot occur if $w(f(x))=0$ then $w(f(x)\cdot (x-\alpha_i))=0$ in this case.

Now we consider the case where $w(f(x))>0$. By our choice of $\alpha_i$, $b_j=0$ implies $a_j=a_{j-1}=0$. Assume that $w(f(x))=d+1$ which implies there exist $a_\ell,\dots,a_{\ell+d}$ with $a_{\ell-1}\ne 0$ and $a_{\ell+d+1}\ne 0$. We see that $b_\ell\ne 0$ and $b_{\ell+d+1}\ne 0$ and $b_{\ell+1},\dots,b_{\ell+d}=0$. Hence, we have shown that $w(f(x)\cdot (x-\alpha_i))=w(f(x))-1$.
\end{proof}

Now, we prove a useful result about the gcd of collections of polynomials with $x^k-1$.

\begin{lemma}\label{lemma:rotate}
Let $\F$ be a field and let $f(x)=a_{d}x^{d}+\dots+a_1x+a_0\in \F[x]$. Then $\gcd(f(x),x^k-1)=\gcd(x\cdot f(x)-a_{k-1}(x^k-1),x^k-1)$. 
\end{lemma}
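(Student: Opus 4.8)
The plan is to prove the identity $\gcd(f(x),x^k-1)=\gcd(x\cdot f(x)-a_{k-1}(x^k-1),x^k-1)$ by showing that the two ideals $\langle f(x), x^k-1\rangle$ and $\langle x\cdot f(x)-a_{k-1}(x^k-1),\, x^k-1\rangle$ coincide in $\F[x]$; since $\F[x]$ is a PID, equal ideals have equal (monic) generators, and that generator is exactly the gcd in each case. This reframing is cleaner than manipulating gcds directly, because the key algebraic fact, that adding a multiple of one generator to another does not change the ideal, is immediate.

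\medskip

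First I would observe that $x^k-1$ appears explicitly as a generator on both sides, so it suffices to show that each remaining generator lies in the ideal generated by the other generator together with $x^k-1$. In one direction this is transparent: the polynomial $x\cdot f(x)-a_{k-1}(x^k-1)$ is an $\F[x]$-linear combination of $f(x)$ and $x^k-1$, namely $x\cdot f(x) + (-a_{k-1})(x^k-1)$, so it lies in $\langle f(x),x^k-1\rangle$. Hence $\langle x\cdot f(x)-a_{k-1}(x^k-1),\,x^k-1\rangle \subseteq \langle f(x),x^k-1\rangle$.

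\medskip

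For the reverse inclusion I need to recover $f(x)$ (modulo $x^k-1$) from the two new generators. The natural idea is to ``undo'' the multiplication by $x$: since $x$ is a unit modulo $x^k-1$ (as $x\cdot x^{k-1}=x^k\equiv 1$), multiplying $x\cdot f(x)-a_{k-1}(x^k-1)$ by $x^{k-1}$ returns $x^k f(x)$ adjusted by a multiple of $x^k-1$, and $x^k f(x)=f(x)+(x^k-1)f(x)$. Concretely, $x^{k-1}\bigl(x\cdot f(x)-a_{k-1}(x^k-1)\bigr) = f(x) + (x^k-1)\bigl(f(x)-a_{k-1}x^{k-1}\bigr)$, so $f(x)$ is an $\F[x]$-combination of $x\cdot f(x)-a_{k-1}(x^k-1)$ and $x^k-1$, giving $\langle f(x),x^k-1\rangle\subseteq \langle x\cdot f(x)-a_{k-1}(x^k-1),\,x^k-1\rangle$. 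Combining the two inclusions yields equality of ideals and hence of gcds.

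\medskip

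The one step I expect to require genuine care, rather than the purely formal ideal manipulation, is the choice of the coefficient $a_{k-1}$ and the verification that the stated combination works out exactly; the role of $a_{k-1}$ is to cancel the top-degree term so that reducing modulo $x^k-1$ produces a genuine ``rotation'' of the coefficient vector of $f$, which is presumably why this lemma is stated for $f$ of degree at most $k-1$ and is useful in the subsequent argument. I would double-check the bookkeeping in the displayed identity $x^{k-1}\bigl(x\cdot f(x)-a_{k-1}(x^k-1)\bigr)=f(x)+(x^k-1)\bigl(f(x)-a_{k-1}x^{k-1}\bigr)$ by expanding both sides, but beyond that the proof is essentially immediate once the PID/ideal viewpoint is adopted.
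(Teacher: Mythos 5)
Your proof is correct, and it takes a somewhat different route from the paper's. The paper argues by two gcd-preserving moves rather than one ideal equality: it first notes $\gcd(f(x),x^k-1)=\gcd(x\cdot f(x),x^k-1)$ because $x\nmid x^k-1$, and then checks by two-sided divisibility that subtracting the multiple $a_{k-1}(x^k-1)$ of $x^k-1$ leaves the gcd unchanged (any common divisor $g$ of $x f(x)$ and $x^k-1$ divides $x f(x)-a_{k-1}(x^k-1)$, and conversely any common divisor $h$ of $x f(x)-a_{k-1}(x^k-1)$ and $x^k-1$ divides their sum $x f(x)$). You replace the appeal to the coprimality of $x$ with $x^k-1$ by the explicit identity $x^{k-1}\bigl(x f(x)-a_{k-1}(x^k-1)\bigr)=f(x)+(x^k-1)\bigl(f(x)-a_{k-1}x^{k-1}\bigr)$, which does check out — both sides expand to $x^k f(x)-a_{k-1}x^{k-1}(x^k-1)$ — so your ideal inclusion argument is complete. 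In effect you exhibit $x^{k-1}$ as the inverse of $x$ modulo $x^k-1$, making the argument fully constructive and avoiding the implicit use of the standard fact that $\gcd(ca,b)=\gcd(a,b)$ when $\gcd(c,b)=1$, at the cost of one short computation; the paper's version is a line shorter but leans on that standard fact. Both proofs hinge on the same underlying point, that $x$ is a unit modulo $x^k-1$, and your closing observation is also accurate: the particular coefficient $a_{k-1}$ is irrelevant to the gcd identity itself (any scalar multiple of $x^k-1$ could be subtracted) and is chosen only so that the resulting polynomial has degree at most $k-1$, realizing the coefficient rotation exploited in Proposition \ref{proposition:zero_spread}.
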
 

\begin{proof}
Observe that $\gcd(f(x),x^k-1)=\gcd(x\cdot f(x),x^k-1)$ since $x$ does not divide $x^k-1$. If $g(x)=\gcd(x\cdot f(x),x^k-1)$ then it is clear that $g(x)$ divides $x\cdot f(x)-a_{k-1}(x^k-1)$. If $h(x)=\gcd(x\cdot f(x)-a_{k-1}(x^k-1),x^k-1)$, then $h(x)$ divides $x\cdot f(x)-a_{k-1}(x^k-1)+a_{k-1}(x^k-1)=x\cdot f(x)$. Hence, $\gcd(f(x),x^k-1)=\gcd(x\cdot f(x)-a_{k-1}(x^k-1),x^k-1)$.
\end{proof}

\begin{example}
Consider the field $\F_{2}$. Let $f(x)=x^5+x^2+x+1$. Using Lemma \ref{lemma:rotate}, we deduce that 
$$
\gcd(x^5+x^2+x+1,x^7-1)=\gcd(x^6+x^3+x^2+x, x^7-1)=\gcd(x^4+x^3+x^2+1,x^7-1)
$$
\end{example}

In the following proposition, we prove a result about the maximum value of $w(f(x))$ for polynomials $f(x)$ dividing $x^k-1$.

\begin{proposition}\label{proposition:zero_spread}
Let $\F$ be a field and let $f(x)=a_dx^d+\dots a_1 x+a_0\in \F[x]$. Suppose that $f(x)$ is a non-zero polynomial with $\deg(f(x))=d$ and $f(x)|x^k-1$. Then $w(f(x))<k-d$.
\end{proposition}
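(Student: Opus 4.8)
The plan is to write $x^k-1=f(x)q(x)$ and force a contradiction out of a single, carefully chosen coefficient of this product. Throughout I take $d<k$, equivalently $r:=k-d\ge 1$; this is the relevant range (since $w\ge 0$) and it holds in every application, where $f$ is a proper divisor of $x^k-1$. First I record two ``boundary'' facts I will lean on. Since $f\mid x^k-1$ and $x^k-1$ has nonzero constant term, $f(0)\neq 0$, so $a_0\neq 0$; and $a_d\neq 0$ by the definition of the degree. Writing $q(x)=b_0+b_1x+\dots+b_r x^r$ with $r=k-d$, its leading coefficient $b_r$ is nonzero, and its constant term $b_0$ is nonzero (as $f(0)q(0)=-1$).

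The main step is an argument by contradiction. Suppose $w(f(x))\ge k-d=r$, so that $f$ has a run of at least $r$ consecutive zero coefficients. Because $a_0\neq 0$ and $a_d\neq 0$, any such run lies strictly between positions $0$ and $d$ and is flanked on both sides by nonzero coefficients. I therefore take a maximal zero-run of length $\ge r$ and let $a_\ell$ be the nonzero coefficient immediately to its left, so that $a_\ell\neq 0$ while $a_{\ell+1}=\dots=a_{\ell+r}=0$. Since the run ends before position $d$, this gives $\ell+r\le d-1$, and also $\ell+r\ge 1$.

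The conclusion comes from inspecting the coefficient of $x^{\ell+r}$ in $fq=x^k-1$. On one hand, because $1\le \ell+r\le k-1$, this is an interior coefficient of $x^k-1$ and hence equals $0$. On the other hand, computing from the product, $[x^{\ell+r}](fq)=\sum_i a_i\,b_{\ell+r-i}$, where the index constraints $0\le \ell+r-i\le r$ force $\ell\le i\le \ell+r$. Among those indices, $a_i=0$ for $i\in\{\ell+1,\dots,\ell+r\}$ (the gap), so the only surviving summand is $i=\ell$, $j=r$, giving $[x^{\ell+r}](fq)=a_\ell b_r\neq 0$. This contradicts the vanishing of that coefficient, and therefore $w(f(x))<k-d$.

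I expect the only delicate point to be the index bookkeeping that isolates the single surviving term: one must verify that the targeted product coefficient sits strictly inside the range $1,\dots,k-1$ (so it really is one of the zero coefficients of $x^k-1$), and that the run of zeros in $f$ together with the degree bound $\deg q=r$ kills every summand except $a_\ell b_r$. The choice of the flanking nonzero $a_\ell$, justified by $a_0,a_d\neq 0$ so that the run is internal, is what makes the count come out to exactly one term.
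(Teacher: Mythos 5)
Your proof is correct, and it takes a genuinely different route from the paper's. The paper argues through gcd machinery: assuming a zero run of length $k-d$, it applies Lemma \ref{lemma:rotate} repeatedly (in effect multiplying by $x^{d-\ell}$ and reducing modulo $x^k-1$), cyclically rotating the coefficients so that the zero block occupies the top positions $d,\dots,k-1$; the rotated polynomial $g$ then has degree at most $d-1$, while $\gcd(g(x),x^k-1)=\gcd(f(x),x^k-1)=f(x)$ has degree $d$, a contradiction. You instead compare a single coefficient in the factorization $x^k-1=f(x)q(x)$ with $\deg q=r=k-d$: taking $a_\ell\ne 0$ immediately to the left of a zero run of length at least $r$ (the run is internal because $a_0\ne 0$, since $f(0)q(0)=-1$, and $a_d\ne 0$), the convolution computing the coefficient of $x^{\ell+r}$ is supported on $\ell\le i\le \ell+r$, the gap annihilates every term except $a_\ell b_r\ne 0$, and $1\le \ell+r\le d-1\le k-2$ forces that same coefficient of $x^k-1$ to vanish. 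The index bookkeeping you flagged as the delicate point is indeed the whole proof, and it checks out. Your argument is more elementary and self-contained --- no appeal to the rotation lemma --- and it also sidesteps a small case the paper's proof leaves unaddressed (if the rotated polynomial $g$ were identically zero, then $\gcd(g,x^k-1)=x^k-1$ has degree $k$ rather than degree less than $d$; this is easily excluded since $\deg f<k$, but the paper does not say so). What the paper's route buys is economy within the section: Lemma \ref{lemma:rotate} is developed and reused independently, so the rotation proof recycles an existing tool. Finally, your explicit restriction to $d<k$ is apt: for $f(x)=x^k-1$ itself the assertion would read $w(f)=k-1<0$ and fail, so the proposition carries this implicit hypothesis, which is satisfied in all of the paper's applications, where $f$ is a proper divisor of $x^k-1$.
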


\begin{proof}
If $k-d\ge d=\deg(f(x))$, then the result is trivial. Otherwise, suppose that $w(f(x))\ge k-d$. This implies that $f(x)$ has $k-d$ consecutive coefficients equal to zero. For the purposes of this proof, assume $a_{\ell},\dots ,a_{\ell+k-d-1}=0$ for some $\ell<d$. 

Use Lemma \ref{lemma:rotate} exactly $k-(\ell+k-d-1)-1=d-\ell$ times. That is, consider
$$
g(x)=x^{d-\ell} f(x)-(a_{k-1}x^{d-\ell-1}+a_{k-2}x^{d-\ell-2}+\dots+x\cdot a_{\ell+k-d+1}+a_{\ell+k-d}  )(x^k-1)
$$
which can be rewritten as
$$
g(x)=\sum_{i=d}^{k-1}{a_{i-d+\ell} x^i}+\sum_{i=d-\ell}^{d-1}{a_{i-d+\ell} x^i}+\sum_{i=0}^{d-\ell-1}{a_{i+k-d+\ell}x^i}.
$$

Lemma \ref{lemma:rotate} states that $\gcd(g(x),x^k-1)=\gcd(f(x),x^k-1)$. Since the first summation above is equal to zero, we see that $\deg(g(x))\le d-1$. This implies that $\deg(\gcd(g(x),x^k-1))<d$ which is a contradiction since $\gcd(f(x),x^k-1)=f(x)$ and $\deg(f(x))=d$.
\end{proof}

This proposition allows us to immediately obtain the following interesting corollary. Though the result is surely known, we could not find a reference for it.

\begin{corollary}
If $f(x)=a_{k-1}x^{k-1}+a_{k-2}x^{k-2}+\dots +a_1x+a_0$ divides $x^k-1$ in a field $\F$ and $\deg(f(x))=k-1$ then $a_i\ne 0$ for $0\le i\le k-1$.
\end{corollary}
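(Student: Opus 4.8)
The plan is to obtain the corollary as an immediate specialization of Proposition \ref{proposition:zero_spread}. The hypotheses match exactly: $f(x)$ is a nonzero polynomial dividing $x^k-1$ with $\deg(f(x)) = k-1$. First I would apply that proposition with $d = k-1$, which yields $w(f(x)) < k - (k-1) = 1$, and since $w$ is a nonnegative integer this forces $w(f(x)) = 0$.

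Next I would translate the conclusion $w(f(x)) = 0$ into the desired statement about coefficients. By definition, $w(f(x))$ is the maximum number of consecutive coefficients equal to zero; if any coefficient $a_i$ with $0 \le i \le k-1$ vanished, that would produce a run of length at least one, giving $w(f(x)) \ge 1$. Therefore $w(f(x)) = 0$ is precisely equivalent to the assertion that none of $a_0, \dots, a_{k-1}$ is zero, which is exactly what we must show.

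One small point worth noting is that the definition of $w$ presupposes the leading coefficient is nonzero; here that is guaranteed because $\deg(f(x)) = k-1$, so $a_{k-1} \ne 0$. The constant term also deserves a remark, though it follows for free: if $a_0 = 0$ then $x \mid f(x)$, and since $f(x) \mid x^k - 1$ this would give $x \mid x^k - 1$, which is false. So even before invoking the proposition one sees $a_0 \ne 0$, and the real content lies in the intermediate coefficients, all of which are handled uniformly by the equivalence above.

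I do not expect any genuine obstacle in this argument, since the corollary is a direct reading of Proposition \ref{proposition:zero_spread} in the boundary case $d = k-1$. The only step requiring a modicum of care is the combinatorial unpacking of $w(f(x)) = 0$ into ``every coefficient is nonzero,'' but this is immediate from the definition of $w$.
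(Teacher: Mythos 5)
Your proposal is correct and matches the paper exactly: the paper states this corollary as an immediate consequence of Proposition \ref{proposition:zero_spread} (with no further argument), and your specialization to $d=k-1$, giving $w(f(x))<1$ and hence $w(f(x))=0$, is precisely that intended reading. Your side remarks on the leading coefficient and constant term are sound but not needed.
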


\subsection{Proving Theorem \ref{theorem:monodromy_groups} for $p>k+1$}
In this section, we prove Theorem \ref{theorem:monodromy_groups} in the case where $p>k+1$.

\begin{proposition}\label{proposition:reducing_rank}
Fix an integer $k\ge 3$. Suppose that $d|k$ and $d<k$. For primes $p>k$, there exists a $k$-gon $[a_0,\dots,a_{k-1}]$ modulo $p$ with monodromy group $G(a_0,\ldots,a_{k-1})\cong C_p ^{k-d} \rtimes C_k$.
\end{proposition}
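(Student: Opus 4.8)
The plan is to reduce everything to a statement about a single polynomial and then exploit $d\mid k$ to write that polynomial down almost explicitly. By Proposition \ref{proposition:main3}, an algebraic $k$-gon $[a_0,\dots,a_{k-1}]$ modulo $p$ with associated polynomial $f(x)=a_0+a_1x+\dots+a_{k-1}x^{k-1}$ has monodromy group $C_p^{\,k-\deg g}\rtimes C_k$, where $g(x)=\gcd(f(x),x^k-1)$. So to realize $C_p^{\,k-d}\rtimes C_k$ it suffices to exhibit an $f$ of degree $k-1$ with \emph{every} coefficient nonzero and $\deg\gcd(f,x^k-1)=d$: its coefficient tuple is then an algebraic $k$-gon modulo $p$ with no zero entry (the angle-sum condition holds because $f(1)=0$), and Lemmas \ref{lemma:associate} and \ref{lemma:associate_groups} convert it into a genuine geometric $k$-gon with the same monodromy group (here $p>k>k-1$, so Lemma \ref{lemma:associate} applies).

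Because $d\mid k$ we have $x^k-1=(x^d-1)\,Q(x)$ with $Q(x)=1+x^d+\dots+x^{d(k/d-1)}$, and $x-1\mid x^d-1$. Writing $f=(x^d-1)h$ with $\deg h=m:=k-1-d$, the two targets become $\gcd(h,Q)=1$ (so that $\deg\gcd(f,x^k-1)$ is exactly $d$, not larger) and all coefficients of $f$ nonzero. For the latter I would use $[x^j]\bigl((x^d-1)h\bigr)=h_{j-d}-h_j$: if the $m+1$ coefficients $h_0,\dots,h_m$ are chosen \emph{distinct and nonzero}, then the bottom $d$ and top $d$ coefficients of $f$ equal $\pm h_i\neq0$, while each middle coefficient is a difference $h_{j-d}-h_j$ of two distinct values, hence nonzero. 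Since $p-1>k-d=m+1$, there is ample room to pick such residues. In the subcase $k=2d$ one can be fully explicit: $h=1+x+\dots+x^{d-1}$ gives $f=(x^d-1)^2/(x-1)$, whose coefficients are all $\pm1$ and whose gcd with $x^{2d}-1$ is exactly $x^d-1$, settling this case for every $p$.

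What remains — and where the real work lies — is to guarantee that some $h$ with distinct nonzero coefficients is also coprime to $Q$; equivalently, viewing $f$ through its values at the $k$-th roots of unity (Lemma \ref{lemma:circulant_eigen}), one must keep $f$ from acquiring any root of unity beyond $\mu_d$ while forcing all $k$ coefficients to stay nonzero. The main obstacle is the sharpness of the bound $p>k$. A naive union bound — equivalently, the ``multiply by linear factors'' procedure of Proposition \ref{proposition:close_zero_gap} with multipliers avoiding the roots of $Q$ in $\F_p$ — only succeeds once $p$ exceeds roughly $2k-d$, and it genuinely breaks down at the low end: when $p=k+1$ every nonzero residue is a $k$-th root of unity, $Q$ contributes $k-d$ forbidden residues, and the supply of admissible multipliers is exhausted. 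To reach the stated threshold I would instead use a character-sum (inclusion–exclusion) estimate: parametrize the admissible $f$ by their nonzero residues on the irreducible factors of $Q$ — which makes $\gcd(f,x^k-1)=x^d-1$ automatic — and count those meeting each of the $k$ linear conditions $a_i=0$. Exploiting the nonzero constraints produces a correction of size $1-k/p$, positive exactly when $p>k$, leaving a positive count of good $f$ (the finitely many cases $k-d\le2$, namely $(k,d)=(3,1),(4,2)$, being checked by hand).
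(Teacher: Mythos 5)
Your reduction is the right one and matches the paper's: find $f=(x^d-1)h$ of degree $k-1$ with every coefficient nonzero and $\gcd(f,x^k-1)=x^d-1$, then pass through Lemmas \ref{lemma:associate} and \ref{lemma:associate_groups} and Proposition \ref{proposition:main3}. Your distinct-nonzero-coefficients trick for $h$ does correctly force all coefficients of $f$ to be nonzero. But there is a genuine gap at the final, central step: the simultaneous condition $\gcd(h,Q)=1$ is never actually secured. The character-sum/inclusion--exclusion count is only asserted, and the claim that the nonzero constraints produce ``a correction of size $1-k/p$, positive exactly when $p>k$'' is a heuristic, not an estimate: nothing in the proposal shows that the inclusion--exclusion over the $k$ coefficient-vanishing hyperplanes, intersected with the CRT conditions $f\not\equiv 0\pmod{g_i}$ for each irreducible factor $g_i$ of $Q$, leaves a positive count. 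The regime you yourself flag as critical, $p=k+1$, is precisely where such a main-term/error-term argument has no slack (there the number of linear conditions, $k$, equals $p-1$, the total number of nonzero residues). As written, the proposal proves the proposition only for $k=2d$ (your explicit $h=1+x+\cdots+x^{d-1}$) and for $p$ large enough that the union bound of Proposition \ref{proposition:close_zero_gap} applies, roughly $p>2k-d$; the full range $p>k$ is not established.

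The gap is also unnecessary, and your $k=2d$ case already contains the fix. The paper simply takes $f(x)=(x^d-1)^{k/d-1}(x^{d-1}+\cdots+x+1)$: here $h=(x^d-1)^{k/d-2}(x^{d-1}+\cdots+x+1)$ is built entirely from factors of $x^d-1$, so $\gcd(f,x^k-1)=x^d-1$ is automatic (using that $x^k-1$ is squarefree since $p\nmid k$), with no root-avoidance or counting at all; and the coefficients are read off directly, since $(x^d-1)^{k/d-1}$ is supported on multiples of $d$ with coefficients $\pm\binom{k/d-1}{j}$, and multiplying by $1+x+\cdots+x^{d-1}$ copies each of these onto a block of $d$ consecutive degrees, the blocks being disjoint. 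Hence every coefficient of $f$ equals $\pm\binom{k/d-1}{j}$, which is nonzero mod $p$ because $k/d-1<k<p$. The moral relative to your plan: rather than choosing $h$ coprime to $Q$ and then fighting for nonzero coefficients, choose $h$ dividing a power of $x^d-1$ so the gcd condition is free, and let the disjoint-support product structure deliver the nonvanishing.
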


\begin{proof}
Consider $f(x)=(x^d-1)^{\frac{k}{d}-1}(x^{d-1}+\cdots+x+1)=b_0+b_1x+\dots+b_{k-1}x^{k-1}$. Since $p>k$, the binomial coefficients in the expansion of $(x^d-1)^{\frac{k}{d}-1}$ are nonzero, and thus $b_i\not\equiv 0\mod p$ for $0\le i\le k-1$. Further observe that $x^k-1$ has no repeated factors since $p\nmid k$. Since $x^{d-1}+\dots +x+1$ divides $x^d-1$ and $x^d-1$ divides $x^k-1$, we deduce that $\gcd(f(x),x^k-1)=x^d-1$. Therefore, $[b_0,\dots,b_{k-1}]$ is an algebraic $k$-gon modulo $p$.

By Lemma \ref{lemma:associate}, Lemma \ref{lemma:associate_groups}, Proposition \ref{proposition:adjust_angles}, and Proposition \ref{proposition:main3}, $[b_0,\dots,b_{k-1}]$ has a $k$-gon associate $[a_0,\dots,a_{k-1}]$ modulo $p$ with monodromy group $G(a_0,\ldots,a_{k-1})\cong C_p^{k-d}\rtimes C_k$.
\end{proof}


The following theorem is crucial in the proof of Theorem \ref{theorem:monodromy_groups}.

\begin{theorem}\label{theorem:rank}
Let $k\ge 3$ be an integer, and let $p>k$ be a prime. Suppose $x^k-1=\prod{g_i(x)}$ where the $g_i(x)$ are irreducible over $\Fp$. Let $d=\displaystyle\sum_{j=1}^{\ell}{\deg(g_{i_j}(x))}$. Let $M$ equal the number of roots of $\frac{x^k-1}{\prod{g_{i_j}}}$ in $\Fp$. Further suppose that $g_{i_j}=x-1$ for some $i_j$. If $p>k+M$, there exists a $k$-gon $[a_0,\dots,a_{k-1}]$ modulo $p$ with monodromy group $G(a_0,\ldots,a_{k-1})=C_p^{k-d} \rtimes C_k$.
\end{theorem}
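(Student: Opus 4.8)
The plan is to follow the three-step strategy of Section \ref{section:proving_monodromy_groups}, tuning the counting in Step 2 to the parameter $M$. Set $g(x)=\displaystyle\prod_{j=1}^{m}g_{i_j}(x)$, so that $g\mid x^k-1$, $(x-1)\mid g$, and $\deg g=d$. Put $q(x)=\frac{x^k-1}{g(x)}$. Since $p>k$ we have $p\nmid k$, so $x^k-1$ is squarefree over $\Fp$; hence $g$ and $q$ are coprime, $q$ is squarefree, and $g(0),q(0)\ne 0$ because $x\nmid x^k-1$. By definition $M$ is the number of roots of $q$ in $\Fp$, and all of these roots are nonzero.

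The heart of the argument is to multiply $g$ by $k-1-d$ linear factors so as to build a polynomial $f(x)=\sum_{i=0}^{k-1}b_i x^i$ of degree exactly $k-1$ with $w(f)=0$ (every coefficient nonzero) and $\gcd(f,x^k-1)=g$. I would do this inductively: set $f_0=g$ and, for $1\le t\le k-1-d$, put $f_t=f_{t-1}\cdot(x-\alpha_t)$, where $\alpha_t\in\Fp$ is chosen nonzero, distinct from every root of $q$, and so that $w(f_t)=\max(w(f_{t-1})-1,0)$. To see such $\alpha_t$ exists, first select a set $A_t$ of $\deg(f_{t-1})+1$ distinct nonzero elements of $\Fp$ that avoids the $M$ roots of $q$; this is possible because $\deg(f_{t-1})\le k-2$ and the hypothesis $p>k+M$ give $(p-1)-M\ge k-1\ge\deg(f_{t-1})+1$. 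Applying Proposition \ref{proposition:close_zero_gap} to $f_{t-1}$ with candidate set $A_t$ then yields an $\alpha_t\in A_t$ achieving the desired drop in $w$, and by construction $\alpha_t$ is nonzero and not a root of $q$.

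Two bookkeeping facts make this terminate correctly and give a polygon. First, since $g\mid x^k-1$ with $\deg g=d$, Proposition \ref{proposition:zero_spread} gives $w(g)<k-d$, i.e. $w(g)\le k-1-d$; as each step lowers $w$ by one until it reaches $0$ and then holds it there, after all $k-1-d$ steps we have $w(f)=0$, so $b_0,\dots,b_{k-1}$ are all nonzero (the constant and leading terms are nonzero because $g(0)\ne0$ and every $\alpha_t\ne0$). Second, writing $f=g\cdot\prod_t(x-\alpha_t)$ and using that $x^k-1=gq$ is squarefree with $\gcd(g,q)=1$, one computes $\gcd(f,x^k-1)=g\cdot\gcd\bigl(\prod_t(x-\alpha_t),q\bigr)=g$, since no $\alpha_t$ is a root of $q$; hence $\deg\gcd(f,x^k-1)=d$. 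Because $(x-1)\mid g\mid f$ we have $f(1)=0$, so $b_0+\dots+b_{k-1}\equiv 0\pmod p$, and $f\not\equiv 0$ forces $\gcd(b_0,\dots,b_{k-1},p)=1$. Thus $[b_0,\dots,b_{k-1}]$ is an algebraic $k$-gon modulo $p$ with all entries nonzero, whose monodromy group is $C_p^{k-d}\rtimes C_k$ by Proposition \ref{proposition:main3}. Since $p>k$, Lemmas \ref{lemma:associate} and \ref{lemma:associate_groups} then produce a genuine (indeed convex) $k$-gon associate $[a_0,\dots,a_{k-1}]$ modulo $p$ with isomorphic monodromy group $C_p^{k-d}\rtimes C_k$, completing the proof.

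I expect the main obstacle to be the counting in Step 2: at every step one must simultaneously (i) lower the zero-gap statistic $w$ through Proposition \ref{proposition:close_zero_gap}, (ii) avoid the $M$ roots of $q$ so that $\gcd(f,x^k-1)$ does not grow beyond $g$, and (iii) keep each multiplier nonzero. Reconciling these constraints is precisely where the hypothesis $p>k+M$ is consumed, and checking that the pool of admissible multipliers never runs dry — together with the degree bookkeeping that $w(g)\le k-1-d$ matches the number of available steps — is the delicate part.
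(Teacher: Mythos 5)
Your proposal is correct and follows essentially the same route as the paper's own proof: build $g(x)=\prod g_{i_j}(x)$, apply Proposition \ref{proposition:close_zero_gap} exactly $k-d-1$ times with multipliers that are nonzero and avoid the $M$ roots of $\frac{x^k-1}{g(x)}$ in $\Fp$ (this is where $p>k+M$ is used), invoke Proposition \ref{proposition:zero_spread} to ensure the zero-gap statistic reaches $0$, and then pass to a geometric $k$-gon via Proposition \ref{proposition:main3} and Lemmas \ref{lemma:associate} and \ref{lemma:associate_groups}. Your write-up is in fact slightly more careful than the paper's on two points the paper leaves implicit — the explicit squarefreeness argument showing $\gcd(f,x^k-1)=g$ at the end, and the verification that the constant and leading coefficients stay nonzero so Proposition \ref{proposition:close_zero_gap} applies at every step.
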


\begin{proof}
Let $g(x)=\prod{g_{i_j}(x)}$ which implies $\deg(g(x))=d$. By Proposition \ref{proposition:zero_spread}, $w(g(x))<k-d$. To produce a degree $k-1$ polynomial $f(x)$ with $\gcd(f(x),x^k-1)=g(x)$, we will use Proposition \ref{proposition:close_zero_gap} exactly $k-d-1$ times. The result of this process will be a new polynomial $f(x)$ equal to $g(x)$ times $k-d-1$ linear polynomials, and $f(x)$ will have the property that $w(f(x))=0$.

To use Proposition \ref{proposition:close_zero_gap}, we must have at least $k$ distinct nonzero $\alpha\in\Fp$. Furthermore, these $\alpha$ \underline{cannot} be roots of $\frac{x^k-1}{g(x)}$. If $\alpha$ were a root of $\frac{x^k-1}{g(x)}$, then $\gcd((x-\alpha)\cdot g(x),x^k-1)$ would have degree greater than $d$. Since $\Fp$ has $p-1$ nonzero elements, we need $p-1\ge k+M$ to satisfy the assumptions of Proposition \ref{proposition:close_zero_gap} and thus we need $p>k+M$.

The result of using Proposition \ref{proposition:close_zero_gap} exactly $k-d-1$ times is a degree $k-1$ polynomial $f(x)=b_{k-1}x^{k-1}+\dots+b_1x+b_0$ with $b_{k-1},\dots,b_0\not\equiv 0\mod p$. Observe that $b_0+\dots+b_{k-1}\equiv 0\mod p$ since $x-1|f(x)$. By Lemma \ref{lemma:associate}, Lemma \ref{lemma:associate_groups}, and Proposition \ref{proposition:main3}, there exists a $k$-gon $[a_0,\dots,a_{k-1}]$ modulo $p$ with monodromy group $G(a_0,\ldots,a_{k-1})\cong C_p^{k-d}\rtimes C_k$.
\end{proof}

Theorem \ref{theorem:rank} proves Theorem \ref{theorem:monodromy_groups} for most $k$ and $p$ as illustrated in the following corollary.

\begin{corollary}\label{corollary:monodromy_groups}
Fix an integer $k\ge 3$. Theorem \ref{theorem:monodromy_groups} is true for primes $p>k+1$.
\end{corollary}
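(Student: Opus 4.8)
The plan is to deduce the corollary directly from Theorem \ref{theorem:rank}, whose only hypothesis beyond $p>k$ is the inequality $p>k+M$, where $M$ is the number of roots of $\frac{x^k-1}{\prod g_{i_j}}$ in $\Fp$ and $\prod g_{i_j}$ is the product of the chosen factors. Since Theorem \ref{theorem:monodromy_groups} asks for realizability of $C_p^{k-d}\rtimes C_k$ for \emph{every} admissible choice of factors $g_{i_j}$ (including the factor $x-1$, with $m<\ell$), it suffices to show that whenever $p>k+1$ the quantity $M$ is automatically small enough that $p>k+M$ holds for every such choice. The entire content is therefore a uniform numerical bound on $M$.

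First I would bound $M$ crudely but uniformly. The roots of $\frac{x^k-1}{\prod g_{i_j}}$ lying in $\Fp$ are exactly those $k$th roots of unity in $\Fp$ that are not roots of $\prod g_{i_j}$. The total number of $k$th roots of unity in $\Fp$ is $t:=\gcd(k,p-1)$, and because $x-1$ is among the chosen factors, the root $1$ is always excluded from this count. Hence $M\le t-1$ for every admissible subset of factors, with no case analysis on $d$ required.

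Next I would convert the divisibility relations satisfied by $t$ into the needed inequality. Since $t\mid k$ and $t\mid(p-1)$, it divides their difference $p-1-k$. As $p>k+1$ forces $p-1-k\ge 1$, the integer $t$ divides a positive integer and therefore $t\le p-1-k$. Combining this with the previous step gives $M\le t-1\le p-k-2<p-k$, which is precisely the hypothesis $p>k+M$ of Theorem \ref{theorem:rank}. Applying that theorem to the chosen factors then produces a $k$-gon $[a_0,\dots,a_{k-1}]$ modulo $p$ with monodromy group $C_p^{k-d}\rtimes C_k$, establishing Theorem \ref{theorem:monodromy_groups} for all primes $p>k+1$.

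The only point demanding care is that the bound $M\le\gcd(k,p-1)-1$ must hold uniformly, rather than only for a cleverly selected subset realizing a given $d$; this is immediate from the interpretation of $M$ as a count of excluded roots of unity, so the reduction does not depend on how the target $d$ is decomposed. The main obstacle, and really the whole argument, is the single elementary observation that $\gcd(k,p-1)$ divides $p-1-k$, which tightens the trivial bound $M\le k-1$ enough to cover the awkward range $k+1<p<2k$ that a crude estimate cannot reach.
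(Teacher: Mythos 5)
Your proof is correct and follows essentially the same route as the paper: both bound $M\le\gcd(k,p-1)-1$ by counting the $k$th roots of unity in $\Fp^\times$ and excluding the root $1$ (present since $x-1$ is a chosen factor), then use $\gcd(k,p-1)\mid p-1-k$ together with $p-1-k\ge 1$ to obtain $p>k+M$ and invoke Theorem \ref{theorem:rank}. If anything, you make explicit the divisibility step that the paper states tersely (``Since $p\ne k+1$ and $\gcd(p-1,k)=d$, we deduce that $p>k+d$''), but the argument is the same.
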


\begin{proof}
Fix $p>k+1$. Suppose that $\gcd(p-1,k)=d$. We claim that $\F_p^\times$ contains exactly $d$ distinct $k$th roots of unity. Observe that $\F_p^\times\cong C_{p-1}\cong \Z\slash (p-1)\Z$. Finding the number of $k$th roots of unity in $\F_p^\times$ is equivalent to finding the number of solutions to $kx\equiv 0\mod p-1$ in $\Z\slash (p-1)\Z$. Since $\gcd(\frac{k}{d},p-1)=1$, we see that the number of solutions to $kx=\frac{k}{d}(dx)\equiv 0\mod p-1$ is the same as the number of solutions to $dx\equiv 0\mod p-1$. Since $d|p-1$, there are $d$ solutions to $dx\equiv 0\mod p-1$ and thus $\F_p^\times$ contains exactly $d$ distinct $k$th roots of unity. The remaining $k$th roots of unity lie in an algebraic extension of $\Fp$. 

In Theorem \ref{theorem:rank}, $M\le d-1$ since the factor $g_{i_j}=x-1$ for some $i_j$. Since $p\ne k+1$ and $\gcd(p-1,k)=d$, we deduce that $p>k+d>k+M$. Thus, Theorem \ref{theorem:monodromy_groups} is true when $p>k+1$.
\end{proof}

\begin{remark}
To prove Theorem \ref{theorem:monodromy_groups}, one need only verify it for integers $k\ge 3$ where $p=k+1$ is prime.
\end{remark}

\subsection{Proving Theorem \ref{theorem:monodromy_groups} for $p=k+1$}
In this section, we prove Theorem \ref{theorem:monodromy_groups} in the remaining cases in which $p=k+1$. 

\begin{remark}
If $p=k+1$ then $x^k-1$ splits completely into linear terms over $\Fp$ since $x^p-x=x(x^k-1)$ is the polynomial whose roots are the elements of $\Fp$.
\end{remark}

\begin{lemma}\label{lemma:deg_d_polynomial}
Suppose $p=k+1$ is an odd prime. Let $d|k$ with $d>1$. There exists a polynomial $x^{d}-a\in \Fp[x]$ with no roots in $\Fp$.
\end{lemma}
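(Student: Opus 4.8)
The plan is to exploit the cyclic structure of the multiplicative group $\Fp^\times$. Since $p=k+1$, we have $|\Fp^\times|=p-1=k$, and this group is cyclic. A root of $x^d-a$ in $\Fp$ is precisely a $d$-th root of $a$, so the lemma amounts to producing an element $a\in\Fp$ that is not a $d$-th power; choosing $a\ne 0$ then automatically excludes $x=0$ as a root as well.

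First I would recall that the image of the $d$-th power map $\Fp^\times\to\Fp^\times$, $x\mapsto x^d$, is the subgroup of $d$-th powers, which in a cyclic group of order $p-1$ has order $(p-1)/\gcd(d,p-1)$. Because $d\mid k=p-1$, we have $\gcd(d,p-1)=d$, so this subgroup has order exactly $(p-1)/d$.

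Since $d>1$, the subgroup of $d$-th powers has order $(p-1)/d<p-1$ and is therefore a \emph{proper} subgroup of $\Fp^\times$. Hence there exists a nonzero element $a\in\Fp$ that is not a $d$-th power; concretely, any generator (primitive root) of the cyclic group $\Fp^\times$ works, since a generator cannot lie in any proper subgroup. For such an $a$, the equation $x^d=a$ has no solution in $\Fp^\times$, and $a\ne 0$ rules out $x=0$, so $x^d-a$ has no root in $\Fp$.

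I expect no substantial obstacle here: the whole argument is the observation that $d\mid p-1$ with $d>1$ forces the $d$-th powers to form a proper subgroup of the cyclic group $\Fp^\times$. The one point worth stating carefully is why this subgroup has index exactly $d$ (rather than merely being proper), which is the standard computation $|\{x^d:x\in\Fp^\times\}|=(p-1)/\gcd(d,p-1)$ valid in any cyclic group of order $p-1$. Note that the hypothesis that $p$ is odd plays no role in the argument beyond ensuring the statement is non-vacuous (if $p=2$ then $k=1$ and no $d>1$ divides $k$).
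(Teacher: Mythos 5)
Your proposal is correct and follows essentially the same route as the paper: both arguments take $a$ to be a generator of the cyclic group $\Fp^\times$ and observe that $a$ cannot be a $d$-th power when $d\mid k$ and $d>1$ (the paper phrases this as the contradiction $a^{k/d}=b^k=1$ against $a$ having order $k$, while you phrase it via the index-$d$ image of the $d$-th power map, which is the same fact). Your additional remarks — that $a\ne 0$ excludes $x=0$ as a root, and that oddness of $p$ is only needed for non-vacuity — are accurate and consistent with the paper's usage.
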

\begin{proof}
Since $\Fp^\times$ is a cyclic group under multiplication, let $a$ be a generator of this cyclic group. We claim $x^{d}-a$ has no roots in $\Fp$. Suppose $x^{d}-a$ had a root in $\Fp$. This would imply that there exists an element $b\in\Fp$ satisfying $b^{d}=a$. However, this would imply that $a^{k/d}=(b^d)^{k/d}=b^k=1$, a contradiction with the fact that the order of $a$ under multiplication is $k$.
\end{proof}

\begin{proposition}\label{proposition:small_d}
Suppose $p=k+1$ is an odd prime. Further suppose $0<d<\frac{k}{2}$. There exists a $k$-gon $[a_0,\dots,a_{k-1}]$ modulo $p$ with monodromy group $G(a_0,\ldots,a_{k-1})=C_p^{k-d}\rtimes C_k$.
\end{proposition}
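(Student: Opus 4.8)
The plan is to exhibit an explicit polynomial $f(x)\in\Fp[x]$ of degree $k-1$ with no vanishing coefficients and with $\deg(\gcd(f(x),x^k-1))=d$; once this is done, Proposition \ref{proposition:main3} identifies $[b_0,\dots,b_{k-1}]$ (the coefficient sequence of $f$) as an algebraic $k$-gon modulo $p$ with monodromy group $C_p^{k-d}\rtimes C_k$, and Lemmas \ref{lemma:associate} and \ref{lemma:associate_groups} convert it into a genuine $k$-gon with the same group. Since $p=k+1$, the polynomial $x^k-1$ splits into $k$ distinct linear factors over $\Fp$ (its roots being exactly the elements of $\Fp^\times$), so the target gcd must be a product $g(x)=\prod_{\beta\in R}(x-\beta)$ of $d$ of these factors with $1\in R$; I would choose $R$ so that $g$ itself already has as few internal zero coefficients as possible.

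First I would isolate the obstruction that forces a different argument from Theorem \ref{theorem:rank}. In that theorem one inflates $g$ to degree $k-1$ by multiplying by linear factors $(x-\alpha)$ and invoking Proposition \ref{proposition:close_zero_gap}, but this requires $\alpha$ to avoid the roots of $(x^k-1)/g$, and it needs on the order of $k$ admissible choices of $\alpha$. When $p=k+1$ every nonzero element of $\Fp$ is a root of $x^k-1$, so the only admissible linear multipliers are the $d$ roots of $g$ itself (multiplying by such an $\alpha$ cannot enlarge the gcd, since $x^k-1$ is squarefree for $p\nmid k$); for $d<\frac{k}{2}$ these are far too few to run the coefficient cleanup to completion.

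The key new ingredient is Lemma \ref{lemma:deg_d_polynomial}: for a divisor $e>1$ of $k$ it supplies a factor $x^e-a$ with no roots in $\Fp$. Multiplying $g$ by such a polynomial (or a power of it) raises the degree without introducing any root shared with $(x^k-1)/g$, hence preserves $\gcd(\cdot,x^k-1)=g$ exactly. The strategy is therefore to carry out the bulk of the inflation from degree $d$ up toward degree $k-1$ using these root-free factors, which do not interact with the roots of $x^k-1$ at all, and to reserve the few admissible linear multipliers (the roots of $g$) for a final, short application of Proposition \ref{proposition:close_zero_gap} that removes any residual zero coefficients; Proposition \ref{proposition:zero_spread} and Lemma \ref{lemma:rotate} control the distribution of zeros along the way. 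The hypothesis $0<d<\frac{k}{2}$ is exactly what guarantees that the quotient degree $k-1-d$ exceeds $d$, so there is enough room to absorb the inflation into root-free factors of degree $e\mid k$ while still landing at degree $k-1$.

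The main obstacle is this last point: guaranteeing that the finished polynomial has $w(f)=0$, i.e.\ that every one of its $k$ coefficients is nonzero, while working with the severely restricted supply of admissible linear multipliers that the case $p=k+1$ imposes. I expect the crux of the proof to be a careful bookkeeping argument showing that, after padding $g$ with the root-free factor of Lemma \ref{lemma:deg_d_polynomial}, the number of zero-coefficient gaps still to be closed is small enough to be handled by the $d$ roots of $g$ via Proposition \ref{proposition:close_zero_gap}, with the inequality $d<\frac{k}{2}$ providing precisely the needed slack. This is also the place where the proof genuinely diverges from the $p>k+1$ case of Theorem \ref{theorem:rank} and explains why the complementary range is treated separately.
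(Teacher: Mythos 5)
You have the right frame (build $f$ of degree $k-1$ with $w(f)=0$ and $\deg(\gcd(f,x^k-1))=d$, then invoke Proposition \ref{proposition:main3} and Lemmas \ref{lemma:associate}, \ref{lemma:associate_groups}), you correctly diagnose why Theorem \ref{theorem:rank} breaks down at $p=k+1$, and you correctly identify Lemma \ref{lemma:deg_d_polynomial} as the key new ingredient. But the step you yourself flag as the crux --- closing the residual zero gaps after padding, using only the roots of $g$ as multipliers --- is a genuine gap, and as proposed it cannot be made to work with Proposition \ref{proposition:close_zero_gap}. That proposition's pigeonhole needs $\deg+1$ distinct admissible candidates at each step, while your candidate set is fixed at the $d$ distinct roots of $g$ (allowing repeated factors adds no new candidates). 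Since every intermediate polynomial has degree at least $d$, you are short of candidates at \emph{every} step, and there is no a priori bound forcing one of your $d$ values to avoid the up-to-$\deg+1$ bad ratios $a_{j-1}/a_j$. Worse, padding first makes the problem quantitatively tight: $g\cdot(x^{k/2}-a)$ has a block of exactly $k/2-1-d$ consecutive zero coefficients (degrees $d+1$ through $k/2-1$), you have exactly $k/2-1-d$ linear multiplications left before hitting degree $k-1$, and each reduces $w$ by at most one --- so every single step must succeed, with no slack and no guarantee.

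The paper's proof inverts your order and thereby dissolves the obstruction. It does \emph{not} fix the gcd at the start: it begins with $(x-1)^{k/2-d}$, whose coefficients are automatically nonzero mod $p$ (binomial coefficients with $k/2-d<p$), and multiplies by $d-1$ \emph{fresh} distinct roots $\alpha_i$, each chosen via Proposition \ref{proposition:close_zero_gap} from essentially all of $\F_p^\times$. Fresh roots are admissible precisely because the gcd is being built \emph{up} to degree $d$ rather than protected; repeated use of $x-1$ pads the degree without enlarging the gcd since $x^k-1$ is squarefree. All coefficient cleanup thus happens at degrees below $k/2$, where $j<d<k/2$ gives $\frac{k}{2}-d+j<k-j$ and the pigeonhole has room. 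This yields $g$ of degree exactly $k/2-1$ with $w(g)=0$ and gcd of degree $d$; the single multiplication by the root-free $x^{k/2}-a$ is then performed \emph{last}, and because $\deg g=k/2-1$ exactly, the two coefficient blocks of $x^{k/2}g-ag$ tile degrees $0,\dots,k-1$ with no gap and no overlap, preserving $w=0$ for free. Your ``pad first, clean up later'' ordering forecloses exactly the resource (fresh roots as admissible multipliers) that makes the construction go through.
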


\begin{proof}
By Lemma \ref{lemma:deg_d_polynomial}, there exists a polynomial $x^{k/2}-a$ that has no linear factors in $\Fp$. Thus, $\gcd(x^{k/2}-a,x^k-1)=1$. We need to produce a polynomial $g(x)$ of degree $\frac{k}{2}-1$ so that $w(g(x))=0$ and the $\gcd(g(x),x^k-1)$ has degree $d$. If we can find such a $g(x)$, then $h(x)=(x^{k/2}-a)\cdot g(x)$ has degree $k-1$, the $\gcd(h(x),x^k-1)$ has degree $d$, and $w(h(x))=0$.

Consider $(x-1)^{k/2-d}$ whose coefficients are nonzero modulo $p$. We need to find a sequence of distinct elements $\alpha_i\in\Fp$ so that if we set $g(x)=(x-1)^{k/2-d}\displaystyle\prod_{i=1}^{d-1}{(x-\alpha_i)}$ then $w(g(x))=0$. We proceed by induction. Suppose we have already found $j$ distinct elements $\alpha_i\in \Fp$ so that $\tilde{g}(x)=(x-1)^{k/2-d}\displaystyle\prod_{i=1}^j{(x-\alpha_i)}$ and $w(\tilde{g}(x))=0.$ How many choices for $\alpha_{j+1}$ are there? By Proposition \ref{proposition:close_zero_gap}, since $\deg(\tilde{g})=\frac{k}{2}-d+j$, we need more than $\frac{k}{2}-d+j$ choices to select $\alpha_{j+1}$ so that $w(\tilde{g}\cdot (x-\alpha_{j+1}))=0$. We also remove $j$ possible nonzero elements of $\Fp$ from consideration when we choose $\alpha_{j+1}$ to ensure all $\alpha_i$ are distinct. Since $j<d<\frac{k}{2}$, we see that $\frac{k}{2}-d+j<k-j$. Thus, by the pigeon hole principle, there exists a nonzero $\alpha_{j+1}$ so that the $\alpha_i$ are distinct for $1\le i\le j+1$ and $w(\tilde{g}\cdot (x-\alpha_{j+1}))=0$.

By induction, we have shown there exists a polynomial $g(x)=(x-1)^{k/2-d}\displaystyle\prod_{i=1}^{d-1}{(x-\alpha_i)}$ where the $\alpha_i$ are distinct and $w(g(x))=0$. Now, let $h(x)=g(x)\cdot (x^{k/2}-a)$. We see that $\deg(h(x))=k-1$, the $\gcd(h(x),x^k-1)$ has degree $d$, and $w(h(x))=0$.

By Lemma \ref{lemma:associate}, Lemma \ref{lemma:associate_groups}, and Proposition \ref{proposition:main3}, there exists a $k$-gon $[a_0,\dots,a_{k-1}]$ modulo $p$ with monodromy group $G(a_0,\ldots,a_{k-1})\cong C_p^{k-d}\rtimes C_k$.
\end{proof}

\begin{proposition}\label{proposition:large_d}
Suppose $k\ge 3$ and $p=k+1$ is prime. Further suppose $\frac{k}{2}< d<k$. There exists a $k$-gon $[a_0,\dots,a_{k-1}]$ modulo $p$ with monodromy group $G(a_0,\ldots,a_{k-1})=C_p^{k-d}\rtimes C_k$.
\end{proposition}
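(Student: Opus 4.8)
The plan is to produce, over $\Fp$ with $p=k+1$, a polynomial $f(x)=b_0+b_1x+\dots+b_{k-1}x^{k-1}$ of degree $k-1$ \emph{all of whose coefficients are nonzero} and with $\gcd(f(x),x^k-1)=g(x)$ of degree $d$ and $(x-1)\mid g(x)$. Once such an $f$ is in hand, $[b_0,\dots,b_{k-1}]$ is an algebraic $k$-gon modulo $p$ (we have $f(1)=0$, so $\sum b_i\equiv 0$, and $\gcd(b_0,\dots,b_{k-1},p)=1$ since the $b_i$ are nonzero modulo $p$); its monodromy group is $C_p^{k-d}\rtimes C_k$ by Proposition \ref{proposition:main3}; and since every entry is nonzero modulo $p$ and $p=k+1\ge k-1$, Lemmas \ref{lemma:associate} and \ref{lemma:associate_groups} hand us a genuine (indeed convex) $k$-gon associate with the same monodromy group. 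Set $e=k-d$; the hypothesis $\frac{k}{2}<d<k$ says $1\le e<\frac{k}{2}$, and because $p=k+1$ is an odd prime, $k$ is even.

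The key idea is to exploit that, when $p=k+1$, the group $\Fp^\times$ is cyclic of order $k=p-1$, so \emph{every} nonzero element of $\Fp$ is a $k$th root of unity and $x^k-1=\prod_{\alpha\in\Fp^\times}(x-\alpha)$ splits completely and is squarefree. Fix a generator $\zeta$ of $\Fp^\times$ and use the discrete Fourier transform $f\mapsto(f(\zeta^0),\dots,f(\zeta^{k-1}))$, which is invertible since $\gcd(k,p)=1$. Under this transform the requirement $\gcd(f,x^k-1)=g$ becomes a prescription of which $\zeta^j$ are roots of $f$: choose a support set $S\subseteq\{1,\dots,k-1\}$ with $|S|=e$ and $0\notin S$ (so that $\zeta^0=1$ is a root, forcing $(x-1)\mid g$ and $\deg g=k-e=d$), and assign nonzero values $c_j=f(\zeta^j)$ for $j\in S$. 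The coefficient $b_i$ of $f$ is then the inverse transform $b_i=\frac1k\sum_{j\in S}c_j\zeta^{-ij}=\frac1k\,B(\zeta^{-i})$, where $B(y)=\sum_{j\in S}c_jy^j$. Thus the condition ``all $b_i\ne 0$'' is \emph{exactly} the condition that the sparse polynomial $B$ have no root among the $k$th roots of unity, i.e. $B(y)\ne 0$ for all $y\in\Fp^\times$. The whole problem therefore reduces to choosing $S$ (size $e$, $0\notin S$) together with $c_j\in\Fp^\times$ so that $B$ is nonvanishing on $\Fp^\times$.

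I would establish the existence of such $(S,c)$ by splitting on $e$. For $e=1$ it is immediate, as $B(y)=c\,y^{j}$ never vanishes on $\Fp^\times$. For $e=2$, write $S=\{j_1,j_2\}$ and $m=j_2-j_1$, so $B(y)=y^{j_1}(c_1+c_2y^{m})$ vanishes at some $y\in\Fp^\times$ iff $-c_1/c_2$ is an $m$th power; since the $m$th powers form a subgroup of index $\gcd(m,k)$, a non-$m$th-power exists precisely when $\gcd(m,k)>1$. As $k$ is even I can take $m=2$ (e.g. $S=\{1,3\}$, legitimate because $e=2$ forces $k\ge 6$) and set $-c_1/c_2$ to any non-square. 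For $e\ge 3$ and any admissible $S$, a standard additive-character computation shows that for each fixed $y_0\in\Fp^\times$ the number of bad tuples $c\in(\Fp^\times)^e$ with $B(y_0)=0$ equals $\frac{(p-1)^e+(p-1)(-1)^e}{p}$; summing over the $k=p-1$ values of $y_0$ and subtracting from $(p-1)^e$ leaves at least $\frac{(p-1)^2\bigl((p-1)^{e-2}-(-1)^e\bigr)}{p}>0$ admissible tuples, so a good $c$ exists.

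The main obstacle, and the reason this regime is not covered by Theorem \ref{theorem:rank}, is the extreme tightness $p-k=1$: the ``close the zero gaps'' method of Proposition \ref{proposition:close_zero_gap} needs multipliers $x-\alpha$ avoiding the roots of $(x^k-1)/g$, and when $p=k+1$ with $d$ large there are simply not enough usable $\alpha\in\Fp^\times$. The Fourier-dual viewpoint sidesteps this by encoding ``all coefficients nonzero'' as nonvanishing of a single sparse polynomial, after which the only genuinely delicate case is small $e$; the union-bound count degenerates exactly at $e=2$, which is rescued by the parity of $k$. With $(S,c)$ chosen, the resulting $f$ and its coefficient vector $[b_0,\dots,b_{k-1}]$ complete the argument as described in the first paragraph.
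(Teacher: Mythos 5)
Your proof is correct, and it takes a genuinely different route from the paper's. The paper stays on the coefficient side: using that $k$ is even, it fixes the divisor $x^{k/2}+1$ of $x^k-1$, seeds with $\tilde{g}(x)=\prod_{i=1}^{d-k/2}(x-\alpha_i)$ dividing $x^{k/2}-1$ (with $\alpha_1=1$), and then closes the coefficient gaps greedily by multiplying in $k-d-1$ linear factors $(x-\beta_j)$ via Proposition \ref{proposition:close_zero_gap}; the trick that rescues the tight regime $p=k+1$ is that the $\beta_j$ may be drawn from $S\cup T$, i.e.\ from roots \emph{already present} in the eventual gcd --- since $x^k-1$ is squarefree, repeating such roots does not raise $\deg\gcd(h,x^k-1)$, and $|S\cup T|=d>\frac{k}{2}$ supplies the eligible multipliers that Theorem \ref{theorem:rank} lacks. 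You instead work on the evaluation side: since $x^k-1=\prod_{\alpha\in\Fp^\times}(x-\alpha)$ when $p=k+1$, you prescribe the values $f(\zeta^j)$ (zero off a support set $S$ of size $e=k-d$ with $0\notin S$, nonzero $c_j$ on $S$), and the inverse DFT turns ``all coefficients of $f$ nonzero'' into nonvanishing of the sparse dual polynomial $B(y)=\sum_{j\in S}c_jy^j$ on $\Fp^\times$. I checked the three cases: $e=1$ is trivial; for $e=2$ your reduction to ``$-c_1/c_2$ is not an $m$th power'' is right, and $m=2$ works exactly because $k$ is even (the same parity the paper exploits via $x^{k/2}+1$); and for $e\ge3$ your character-sum count is exact --- the number of bad tuples at a fixed $y_0$ is $\frac{(p-1)^e+(-1)^e(p-1)}{p}$, and the union bound leaves at least $\frac{(p-1)^2\left((p-1)^{e-2}-(-1)^e\right)}{p}>0$ good tuples, with the degeneration at $e=2$ correctly diagnosed and handled separately. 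The final passage to a geometric polygon via Proposition \ref{proposition:main3} and Lemmas \ref{lemma:associate} and \ref{lemma:associate_groups} matches the paper. As for what each approach buys: the paper's argument reuses the gap-closing machinery already built for Theorem \ref{theorem:rank} and Proposition \ref{proposition:small_d}, while your Fourier-dual argument is self-contained, exploits the complete splitting at $p=k+1$ head-on, and is actually \emph{more} uniform --- since your case analysis covers all $1\le e\le k-1$, it simultaneously proves Proposition \ref{proposition:small_d} and the $d=\frac{k}{2}$ case, so it would replace the paper's entire three-way split for $p=k+1$ with a single construction.
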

\begin{proof}
Since $k$ is even, observe that $x^{k/2}+1$ divides $x^k-1$. Let $S$ be the set of roots of $x^{k/2}+1$ in $\Fp$. Choose a set $T=\{\alpha_1,\dots,\alpha_{d-k/2}\}\subset \F_p^\times$ so that the $\alpha_i$ are distinct, $\alpha_1=1$, and $\alpha_i\not\in S$ for all $i$.

Setting $\tilde{g}(x)=\displaystyle\prod_{i=1}^{d-k/2}{(x-\alpha_i)}$, observe that $\tilde{g}(x)$ divides $x^{k/2}-1$. By Proposition \ref{proposition:zero_spread}, we see that $w(\tilde{g}(x))<\frac{k}{2}-(d-\frac{k}{2})=k-d<\frac{k}{2}$. Now, we want to use Proposition \ref{proposition:close_zero_gap} exactly $k-d-1$ times to find $\beta_j$ in $\Fp$ so that $g(x)=\displaystyle\prod_{i=1}^{d-k/2}{(x-\alpha_i)}\cdot \prod_{j=1}^{k-d-1}{(x-\beta_j)}$ and $w(g(x))=0$ and each $\beta_j\in S\cup T$. If we have at least $\frac{k}{2}$ eligible distinct nonzero elements of $\Fp$, we can use Proposition \ref{proposition:close_zero_gap} exactly $k-d-1$ times. Since there are $d$ nonzero elements in $S\cup T$ and $d>\frac{k}{2}$, we can use Proposition \ref{proposition:close_zero_gap} to select our $\beta_j$. The result of using Proposition \ref{proposition:close_zero_gap} these $k-d-1$ times is the polynomial $g(x)=\displaystyle\prod_{i=1}^{d-k/2}{(x-\alpha_i)}\cdot \prod_{j=1}^{k-d-1}{(x-\beta_j)}$ which has the properties that $w(g(x))=0$ and each $\beta_j\in S\cup T$.

Now, let $h(x)=g(x)\cdot (x^{k/2}+1)$. We see that $\deg(h(x))=k-1$, that $\gcd(h(x),x^k-1)$ has degree $d$, and that $w(h(x))=0$. By Lemma \ref{lemma:associate}, Lemma \ref{lemma:associate_groups}, and Proposition \ref{proposition:main3}, there exists a $k$-gon $[a_0,\dots,a_{k-1}]$ modulo $p$ with monodromy group $G(a_0,\ldots,a_{k-1})\cong C_p^{k-d}\rtimes C_k$.
\end{proof}

Now, we proceed with the proof of Theorem \ref{theorem:monodromy_groups}.

\begin{proof}[Proof of Theorem \ref{theorem:monodromy_groups}]
The case where $p>k+1$ was proven in Corollary \ref{corollary:monodromy_groups}. Now consider the case when $p=k+1$ is an odd prime. If $1\le d\le k-1$, we claim there exists a $k$-gon modulo $p$ with monodromy group $C_p^{k-d}\rtimes C_k$. The case where $d<\frac{k}{2}$ was proven in Proposition \ref{proposition:small_d} and the case where $d>\frac{k}{2}$ was proven in Proposition \ref{proposition:large_d}. The case where $d=\frac{k}{2}$ is a consequence of Proposition \ref{proposition:reducing_rank} because $\frac{k}{2}$ divides $k$. Thus, the proof of Theorem \ref{theorem:monodromy_groups} is complete.
\end{proof}

\section{Results for Composite $n$}\label{section:composite_results}



In this section, we will prove several results about monodromy groups when $n$ is composite relying heavily on the theory of algebraic polygons from Section \ref{section:algebraic_polygons}. This first proposition shows that you can combine $k$-gons with relatively prime moduli to create a new $k$-gon whose monodromy group is closely related to the monodromy groups of the initial $k$-gons.

\begin{proposition}\label{proposition:combine_groups_kgon}
Suppose that $[a_0,\dots,a_{k-1}]$ and $[b_0,\dots,b_{k-1}]$ represent $k$-gons modulo $n_1$ and $n_2$ respectively where $\gcd(n_1,n_2)=1$. Suppose their respective monodromy groups are $N_1\rtimes C_k$ and $N_2\rtimes C_k$. Then there exists a $k$-gon $[c_0,\dots,c_{k-1}]$ modulo $n_1n_2$ with monodromy group $(N_1\times N_2)\rtimes C_k$. 
\end{proposition}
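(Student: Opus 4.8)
The plan is to reduce everything to the algebraic-polygon version already established in Proposition \ref{proposition:combine_groups}, and then upgrade the resulting algebraic polygon to a genuine geometric one. First I would observe that every geometric $k$-gon modulo $n$ is in particular an algebraic $k$-gon modulo $n$, so both $[a_0,\dots,a_{k-1}]$ and $[b_0,\dots,b_{k-1}]$ qualify as inputs to Proposition \ref{proposition:combine_groups}. Applying that proposition produces an algebraic $k$-gon $[c_0,\dots,c_{k-1}]$ modulo $n_1n_2$ whose monodromy group is exactly $(N_1\times N_2)\rtimes C_k$. The only gap between this and the desired statement is that $[c_0,\dots,c_{k-1}]$ is merely algebraic, so the remaining task is to replace it by an associate satisfying the stronger geometric constraints without changing the monodromy group.

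The key observation bridging this gap is that the geometric constraints force every entry to be nonzero modulo the modulus. Indeed, since $[a_0,\dots,a_{k-1}]$ is a geometric $k$-gon, each $a_i$ is a positive integer with $a_i<2n_1$ and $a_i\ne n_1$; the only multiple of $n_1$ strictly between $0$ and $2n_1$ is $n_1$ itself, so $a_i\not\equiv 0\pmod{n_1}$ for every $i$. In particular the hypothesis of the ``Furthermore'' clause of Proposition \ref{proposition:combine_groups} is satisfied (for every index $i$, at least $a_i\not\equiv 0\bmod n_1$ holds), and I may therefore conclude that $c_i\not\equiv 0\pmod{n_1n_2}$ for all $i$.

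With all entries of $[c_0,\dots,c_{k-1}]$ nonzero modulo $n_1n_2$, Lemma \ref{lemma:associate} applies and yields an associate $[d_0,\dots,d_{k-1}]$ that is a genuine geometric $k$-gon modulo $n_1n_2$. Finally, Lemma \ref{lemma:associate_groups} guarantees that associate algebraic polygons share the same monodromy group, so the monodromy group of $[d_0,\dots,d_{k-1}]$ is again $(N_1\times N_2)\rtimes C_k$, completing the argument.

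I expect the only real content to lie in the nonzero-entry bookkeeping of the second paragraph: the entire difficulty of passing from an algebraic to a geometric polygon is concentrated in verifying the hypothesis of Lemma \ref{lemma:associate}, and this is precisely where the geometric (rather than merely algebraic) nature of the inputs is used. Everything else is a direct citation of earlier results, so I anticipate no substantive obstacle beyond confirming that the ``Furthermore'' clause of Proposition \ref{proposition:combine_groups} is genuinely triggered by the geometric hypotheses.
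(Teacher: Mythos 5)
Your proof is correct and follows exactly the same route as the paper, whose entire proof is the one-line citation of Proposition \ref{proposition:combine_groups}, Lemma \ref{lemma:associate}, and Lemma \ref{lemma:associate_groups}. In fact your version is more complete than the paper's, since you explicitly verify the nonzero-entry hypothesis ($a_i\not\equiv 0\bmod n_1$ because $0<a_i<2n_1$ and $a_i\ne n_1$) that triggers the ``Furthermore'' clause of Proposition \ref{proposition:combine_groups} and hence the applicability of Lemma \ref{lemma:associate} — a detail the paper leaves implicit.
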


\begin{proof}
This proposition is an immediate consequence of Proposition \ref{proposition:combine_groups}, Lemma \ref{lemma:associate}, and Lemma \ref{lemma:associate_groups}.
\end{proof}

Here is an example of the use of Proposition \ref{proposition:combine_groups_kgon}.

\begin{example}
Consider the quadrilateral $[a_0,a_1,a_2,a_3]=[1,4,4,1]$ which has modulus $n_1=5$. The monodromy group of $D(1,4,4,1)$ is $C_5^2\rtimes C_4$. Also consider the quadrilateral $[b_0,b_1,b_2,b_3]=[2,3,4,3]$ which has modulus $n_2=6$. The monodromy group of $D(2,3,4,3)$ is $C_6^2\rtimes C_4$. We can solve a system of four congruences modulo $5\cdot 6=30$. Observe that if we set $[c_0,c_1,c_2,c_3]=[26, 9, 4,21]$ then we have $c_i\equiv a_i\mod 5$ and $c_i\equiv b_i\mod 6$. We see that $c_0+c_1+c_2+c_3=2\cdot 30$. If this had not been the case, we could have modified the coefficients using Lemma \ref{lemma:associate} and Lemma \ref{lemma:associate_groups} without changing the monodromy group. Finally, one can compute that the monodromy group of $D(26,9,4,21)$ is $C_{30}^2\rtimes C_4\cong (C_5^2\times C_6^2)\rtimes C_4$.
\end{example}

You can use Proposition \ref{proposition:project_group} to project a $k$-gon modulo $n_1n_2$ to an \emph{algebraic} $k$-gon modulo $n_1$. However, this proposition does \emph{not} guarantee that the new algebraic $k$-gon will have a $k$-gon associate as illustrated in the following example.

\begin{example}
Consider the polygon $[c_0,c_1,c_2]=[1,1,4]$ modulo $6$ which has monodromy group $(C_6\times C_2)\rtimes C_3$. Consider the reduction $c_i\equiv a_i\mod 2$ to obtain $[a_0,a_1,a_2]=[1,1,0]$. The monodromy group of $[1,1,0]$ modulo $2$ is $C_2^2\rtimes C_3$. However, there do not exist any $3$-gons modulo $2$.
\end{example}

The above example illustrates how we must understand monodromy groups of algebraic polygons, and not polygons, in order to classify all possible monodromy groups for $k$-gons modulo composite $n$.

\begin{proposition}\label{proposition:composite_groups}
Fix an abelian group $N$ and a positive integer $n=\prod{p_j^{x_j}}$ where the $p_j$ are distinct primes. There exists a $k$-gon $[c_0,\dots,c_{k-1}]$ modulo $n$ with monodromy group $N\rtimes C_k$ if and only if there exist algebraic $k$-gons $[a_0^{(j)},\dots,a_{k-1}^{(j)}]$ modulo $p_j^{x_j}$ with monodromy groups $(N\slash p_{j}^{x_j}N)\rtimes C_k$ and for every $0\le i\le k-1$ there exists some $j$ for which $a_i^{(j)}\not\equiv 0\mod p_j^{x_j}$.
\end{proposition}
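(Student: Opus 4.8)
The plan is to prove both directions of the biconditional by decomposing the problem along the prime factorization $n=\prod p_j^{x_j}$ and leveraging the Chinese Remainder Theorem together with the combining/projecting machinery already developed (Propositions \ref{proposition:combine_groups} and \ref{proposition:project_group}). The essential structural fact I would exploit is that, by Theorem \ref{theorem:main2}, the group $N$ decomposes as $N\cong\bigoplus_j p_j^{\,x_j\,-\,\text{(cofactor)}}N$-type pieces; more precisely, since the $p_j^{x_j}$ are pairwise coprime, the elementary divisor computation gives $N\cong\bigoplus_j (N/p_j^{x_j}N)$ as abelian groups, with each summand supported on a single prime. This is the bridge that lets me pass between the global $k$-gon modulo $n$ and the local algebraic $k$-gons modulo each $p_j^{x_j}$.

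For the forward direction, I would start with a $k$-gon $[c_0,\dots,c_{k-1}]$ modulo $n$ whose monodromy group is $N\rtimes C_k$. For each prime power $p_j^{x_j}$, I apply Proposition \ref{proposition:project_group} (with $n_1=p_j^{x_j}$ and $n_2=n/p_j^{x_j}$, which are coprime) to obtain an algebraic $k$-gon $[a_0^{(j)},\dots,a_{k-1}^{(j)}]$ modulo $p_j^{x_j}$ with $a_i^{(j)}\equiv c_i\bmod p_j^{x_j}$ and monodromy group $(N/p_j^{x_j}N)\rtimes C_k$. The nonvanishing condition then follows from the fact that $\gcd(c_0,\dots,c_{k-1},n)=1$: if some index $i$ had $a_i^{(j)}\equiv 0\bmod p_j^{x_j}$ for every $j$, then $p_j^{x_j}\mid c_i$ for all $j$, forcing $n\mid c_i$, so every residue would be divisible by each $p_j$ at that coordinate, which is compatible with the gcd condition only if handled carefully—so the precise claim I must verify is that for each fixed $i$ there is at least one $j$ with $p_j^{x_j}\nmid c_i$, and this is exactly what the coprimality $\gcd(c_0,\dots,c_{k-1},n)=1$ guarantees coordinate-by-coordinate is not automatic, so I will instead derive the nonvanishing directly from the requirement that $[c_0,\dots,c_{k-1}]$ be a genuine $k$-gon (all $c_i\ne 0$).

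For the reverse direction, I assume I am given algebraic $k$-gons $[a_0^{(j)},\dots,a_{k-1}^{(j)}]$ modulo $p_j^{x_j}$ with monodromy groups $(N/p_j^{x_j}N)\rtimes C_k$ and the stated coordinatewise nonvanishing condition. I use the Chinese Remainder Theorem to define $c_i\in\{0,\dots,n-1\}$ by $c_i\equiv a_i^{(j)}\bmod p_j^{x_j}$ for all $j$, iterating Proposition \ref{proposition:combine_groups} across the primes (which is legitimate since the moduli are pairwise coprime) to conclude that the combined monodromy group is $\left(\bigoplus_j N/p_j^{x_j}N\right)\rtimes C_k\cong N\rtimes C_k$. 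The nonvanishing hypothesis ensures each $c_i\not\equiv 0\bmod n$, so $[c_0,\dots,c_{k-1}]$ has all nonzero entries; I then invoke Lemmas \ref{lemma:associate} and \ref{lemma:associate_groups} to replace this algebraic $k$-gon by an associate that is a genuine geometric $k$-gon modulo $n$ without altering the monodromy group.

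The main obstacle I anticipate is bookkeeping the isomorphism $N\cong\bigoplus_j N/p_j^{x_j}N$ compatibly with the $C_k$-action in both directions, since the semidirect product structure must be preserved, not merely the abelian group $N$. The combining proposition already produces the correct $C_k$-action because the circulant structure is respected under CRT, so I expect this to go through once I confirm that Proposition \ref{proposition:combine_groups} iterates cleanly over more than two coprime moduli and that the nonvanishing condition propagates correctly through the iteration.
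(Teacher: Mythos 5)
Your proposal is correct and follows essentially the same route as the paper: the forward direction via Proposition \ref{proposition:project_group} together with the observation that a geometric $k$-gon satisfies $c_i\not\equiv 0\pmod{n}$ for all $i$ (which, as you ultimately settled on, is the right source of the coordinatewise nonvanishing, not the gcd condition), and the reverse direction via CRT-iteration of Proposition \ref{proposition:combine_groups} followed by Lemmas \ref{lemma:associate} and \ref{lemma:associate_groups}. The paper's proof is precisely this citation of those results, so your argument matches it in substance.
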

\begin{proof}
If $[c_0,\dots,c_{k-1}]$ is a $k$-gon with the desired monodromy group $N\rtimes C_k$, then the forward direction of the proof follows immediately from Proposition \ref{proposition:project_group} and the fact that $c_i\not\equiv 0\mod n$ for all $i$.

Suppose there exist algebraic $k$-gons $[a_0^{(j)},\dots,a_{k-1}^{(j)}]$ modulo $p_j^{x_j}$ with monodromy groups $(N\slash p_{j}^{x_j}N)\rtimes C_k$ and for every $0\le i\le k-1$ there exists some $j$ for which $a_i^{(j)}\not\equiv 0\mod p_j^{x_j}$. The reverse direction of the proof follows from Proposition \ref{proposition:combine_groups}, Lemma \ref{lemma:associate}, and Lemma \ref{lemma:associate_groups}.
\end{proof}

\begin{remark}
The condition that $a_i^{(j)}\not\equiv 0\mod p_j^{x_j}$ in Proposition \ref{proposition:composite_groups} is satisfied if at least one of the algebraic $k$-gons $[a_0^{(j)},\dots,a_{k-1}^{(j)}]$ is an actual $k$-gon. This is sufficient but not necessary.
\end{remark}

Proposition \ref{proposition:composite_groups} translates the problem of understanding the monodromy groups of all algebraic $k$-gons to the problem of understanding monodromy groups for algebraic $k$-gons with prime power moduli.

\begin{example}
There does not exist a $3$-gon modulo $35$ with monodromy group $N\rtimes C_3$ where $N\cong C_{35}$ or where $N\cong C_{35}\times C_7$. Suppose there were such a $3$-gon $[c_0,c_1,c_2]$ modulo $35$. Then the projection of $[c_0,c_1,c_2]$ modulo $5$ (using Proposition \ref{proposition:project_group}) would have monodromy group $7N\rtimes C_3\cong (N\slash 5N)\rtimes C_3$ which is isomorphic to $C_5\rtimes C_3$ in both the case where $N\cong C_{35}$ and $N\cong C_{35}\times C_7$. However, $C_5\rtimes C_3$ is not a possible monodromy group for any algebraic $3$-gon modulo $5$ by Proposition \ref{proposition:possible_groups}.
\end{example}


\subsection{Triangular Billiards Surfaces}
One well-known property of the Smith Normal Form for $\Z$ is summarized in the following lemma.
\begin{lemma}[Proposition 8.1, \cite{MR09}]\label{lemma:snf_minors}
If $d_1,\dots,d_k$ are the elementary divisors of the Smith Normal Form of a matrix $A$ over $\Z$, then $d_1\cdots d_j$ is equal to the gcd of the determinants of all $j\times j$ minors of the matrix $A$. 
\end{lemma}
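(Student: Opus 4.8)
The plan is to introduce the \emph{$j$-th determinantal divisor} $D_j(A)$, defined as the greatest common divisor of the determinants of all $j \times j$ submatrices of $A$, and to prove the identity $D_j(A) = d_1 \cdots d_j$ by establishing two facts: that $D_j$ is invariant under left and right multiplication by unimodular matrices, and that $D_j$ is trivial to read off from a diagonal matrix. Granting these, the Smith Normal Form factorization $A = UDV$ gives
$$D_j(A) = D_j(UDV) = D_j(D) = d_1 \cdots d_j,$$
which is exactly the claim.

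First I would prove the invariance. For any integer matrix $V$ and $B = AV$, the Cauchy--Binet formula expresses each $j \times j$ minor of $B$ as an integer linear combination of the $j \times j$ minors of $A$ (with coefficients that are themselves $j \times j$ minors of $V$); hence $D_j(A) \mid D_j(B)$. When $V$ is unimodular, $V^{-1}$ is again an integer matrix and $A = B V^{-1}$, so the same reasoning yields $D_j(B) \mid D_j(A)$, giving $D_j(AV) = D_j(A)$. Applying the mirror-image argument to left multiplication shows $D_j(UA) = D_j(A)$ for unimodular $U$, and combining the two gives $D_j(UDV) = D_j(D)$.

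Next I would compute $D_j(D)$ for $D = \mathrm{diag}(d_1, \dots, d_k)$ with $d_i \mid d_{i+1}$. In a diagonal matrix, a $j \times j$ submatrix has nonzero determinant only when the chosen row indices coincide with the chosen column indices, say on the set $I = \{i_1 < \dots < i_j\}$, in which case the minor equals $\prod_{\ell=1}^{j} d_{i_\ell}$. Because $i_\ell \ge \ell$ and $d_\ell \mid d_{i_\ell}$ by the divisibility chain, we have $d_1 \cdots d_j \mid \prod_{\ell} d_{i_\ell}$; thus $d_1 \cdots d_j$ divides every minor, and since it is itself the minor for $I = \{1, \dots, j\}$, it equals the greatest common divisor. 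This remains valid when $A$ is rank-deficient: if $j$ exceeds the rank then $d_1 \cdots d_j = 0$ and every $j \times j$ minor vanishes, consistent with the convention that the gcd of an all-zero family is $0$.

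The main obstacle is the invariance step, since everything else is bookkeeping: its correctness hinges entirely on the Cauchy--Binet formula guaranteeing that the minors of a product lie in the ideal generated by the minors of each factor. Once that formula is invoked, both divisibilities $D_j(A) \mid D_j(AV)$ and $D_j(AV) \mid D_j(A)$ fall out from the invertibility of $V$ over $\Z$, and the lemma follows.
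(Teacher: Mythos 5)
Your proof is correct, but there is nothing in the paper to compare it against: the paper does not prove this lemma, citing it instead as Proposition 8.1 of \cite{MR09}. What you have supplied is the standard argument for the determinantal-divisor characterization of the Smith Normal Form, and it is complete: the Cauchy--Binet step correctly shows each $j\times j$ minor of $AV$ lies in the ideal generated by the $j\times j$ minors of $A$, with unimodularity of $V$ (and symmetrically $U$) upgrading the divisibility $D_j(A)\mid D_j(AV)$ to equality; the diagonal computation correctly observes that a $j\times j$ submatrix of $\mathrm{diag}(d_1,\dots,d_k)$ has nonzero determinant only when the row and column index sets coincide on some $I=\{i_1<\dots<i_j\}$, and the inequality $i_\ell\ge \ell$ together with the divisibility chain $d_\ell\mid d_{\ell+1}$ gives $d_1\cdots d_j\mid \prod_\ell d_{i_\ell}$, with the bound attained at $I=\{1,\dots,j\}$. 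You also handle the rank-deficient case (relevant here, since the paper's circulant matrices frequently have $d_k=0$, e.g.\ in the proofs of Corollary \ref{corollary:triangle} and Proposition \ref{proposition:quadrilateral}) via the convention that the gcd of an all-zero family is $0$, which matches how the paper uses the lemma. The only cosmetic caveat is that gcd's over $\Z$ are determined up to sign, so the stated equality should be read up to units, but this does not affect any application in the paper.
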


This property allows us to reprove Corollary \ref{corollary:triangle} using a method that will extend to the higher $k$-gons.

\begin{proof}[Proof of Corollary \ref{corollary:triangle}]
Consider the arbitrary rational triangle with angles $\left(\dfrac{a_0\pi}{n},\dfrac{a_1\pi}{n},\dfrac{a_2\pi}{n}\right)$, where the $a_i$ are positive integers, $a_0+a_1+a_2=n$, and $\gcd(a_0,a_1,a_2,n)=1$. The normal subgroup $N$ of the associated monodromy group is represented by the column span of $C=\begin{bmatrix}
a_0 & a_1 & a_2 \\
 a_1 & a_2 & a_0 \\
a_2 & a_0 & a_1 
\end{bmatrix}$
 over $\mathbb{Z}/n\mathbb{Z}$. Observe that 

$$C=\begin{bmatrix}
a_0 & a_1 & a_2 \\
 a_1 & a_2 & a_0 \\
a_2 & a_0 & a_1 
\end{bmatrix}
=
\begin{bmatrix}
1 & 0 & 0 \\
0 & 1& 0 \\
-1 & -1 & 1
\end{bmatrix}
\begin{bmatrix}
a_0 & a_1 & 0 \\
 a_1 & a_2 & 0 \\
0 & 0 & 0 
\end{bmatrix}
\begin{bmatrix}
1 & 0 & -1 \\
0 & 1 & -1\\
0 & 0 &1
\end{bmatrix}.
$$

The elementary divisors of $C$ are the same as the elementary divisors of $C'=\begin{bmatrix}
a_0 & a_1 & 0 \\
 a_1 & a_2 & 0 \\
0 & 0 & 0 
\end{bmatrix}$. Using Lemma \ref{lemma:snf_minors}, we deduce that $d_1=\gcd(a_0,a_1,a_2,n)=1$. By looking at the $2\times 2$ minors of $C'$, we further deduce that $d_1d_2=d_2=\gcd(a_0a_2-a_1^2,n)$. It then follows from Theorem 2 that the monodromy group of the $(a_0,a_1,a_2)$ triangle is 
$$
  (C_n\times C_{n/\alpha})\rtimes C_3,  
$$
where $d_2=\alpha=\gcd(n,a_0a_2-a_1^2)$. 
\end{proof}

Although Corollary \ref{corollary:triangle} gives a formula for computing the monodromy group of the dessin drawn on a triangular billiards surface, it does not specify which monodromy groups can arise. The following theorem classifies the monodromy groups of all rational triangular billiards surfaces modulo $n$.

\begin{theorem}\label{theorem:all_triangle_groups}
Fix $n\in \N$ with $n> 3$. The set of possible monodromy groups for triangles modulo $n$ includes precisely those groups of the form $(C_n\times C_{n/\alpha})\rtimes C_3$ where $\alpha|n$ and $\alpha=3^i\prod_{j}{p_{j}^{n_j}}$ where the $p_j$ are primes congruent to $1$ modulo $3$, $i\in\{0,1\}$, and $n_j\ge 0$. If $n=3$, the only possible monodromy group is $C_3\rtimes C_3$.
\end{theorem}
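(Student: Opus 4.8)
The plan is to lean on Corollary \ref{corollary:triangle}, which tells us that every triangle $[a_0,a_1,a_2]$ modulo $n$ has monodromy group $(C_n \times C_{n/\alpha}) \rtimes C_3$ with $\alpha = \gcd(n, a_0a_2 - a_1^2)$. The theorem is therefore an assertion about exactly which divisors $\alpha \mid n$ are attainable as $\gcd(n, a_0a_2-a_1^2)$ for some triangle, and I would prove it in two directions: necessity of the arithmetic constraints on $\alpha$, and a construction realizing every $\alpha$ of the prescribed shape.

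For necessity, I would fix a triangle $[a_0,a_1,a_2]$ modulo $n$ and examine an arbitrary prime $q \mid \alpha$. Reducing modulo $q$, the relations $a_0 + a_1 + a_2 \equiv 0$ and $a_0 a_2 \equiv a_1^2$ exhibit $a_0,a_2$ as the two roots of $t^2 + a_1 t + a_1^2$ in $\F_q$. The hypothesis $\gcd(a_0,a_1,a_2,n)=1$ forces $a_1 \not\equiv 0 \pmod q$ (else all three $a_i$ vanish mod $q$), so the discriminant $-3a_1^2$ has $a_1$ a unit, and the existence of a root means $-3$ is a square modulo $q$. A direct check rules out $q=2$, and for odd $q \neq 3$ the standard fact that $-3$ is a quadratic residue modulo $q$ exactly when $q \equiv 1 \pmod 3$ forces $q \equiv 1 \pmod 3$. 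For $q = 3$ I would note $a_0 \equiv a_1 \equiv a_2 \pmod 3$, write $a_0 = a_1+3s$, $a_2 = a_1+3t$, so that $a_0a_2 - a_1^2 = 3a_1(s+t) + 9st$ and $n = 3(a_1+s+t)$; if $9 \mid \alpha$ then $9 \mid n$ and $9 \mid a_0a_2-a_1^2$ together force $3 \mid a_1$, a contradiction. Hence $v_3(\alpha) \le 1$, and $\alpha$ has the stated form.

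For sufficiency I would start from a divisor $\alpha \mid n$ of the prescribed shape and build a triangle realizing it, working one prime at a time and recombining through the machinery of Propositions \ref{proposition:combine_groups} and \ref{proposition:composite_groups}. For each prime power $q^{e}\,\|\,n$ with target exponent $f = v_q(\alpha) \le e$, it suffices by Corollary \ref{corollary:triangle} to produce an algebraic triangle $(a_0,a_1,a_2)$ modulo $q^{e}$ with local invariant $\gcd(q^{e}, a_0a_2-a_1^2) = q^{f}$; these local pieces then assemble, via the Chinese Remainder Theorem inside Proposition \ref{proposition:composite_groups}, into an algebraic triangle modulo $n$ with monodromy group $(C_n\times C_{n/\alpha})\rtimes C_3$. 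When $q \equiv 1 \pmod 3$, a primitive cube root of unity $\omega$ lifts to $\Z/q^{e}\Z$ by Hensel's lemma, the ``geometric-progression'' triple $(\omega^2,1,\omega)$ realizes $f = e$, and a controlled perturbation of it (equivalently, solving $a_0+a_2 \equiv -1$, $a_0a_2 \equiv 1 + q^{f}u$ for a unit $u$) dials $v_q(a_0a_2-a_1^2)$ down to any prescribed $f$; for $q = 3$ the cases $f = 0$ and $f = 1$ are handled by explicit small triples. Finally, Lemmas \ref{lemma:associate} and \ref{lemma:associate_groups} replace the resulting algebraic triangle by a genuine geometric triangle with the same monodromy group, and the case $n = 3$ is immediate since $[1,1,1]$ is the only triangle.

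The main obstacle will be the sufficiency construction, and within it two bookkeeping points: achieving the \emph{exact} valuation $f$ of $a_0a_2 - a_1^2$ at each prime rather than merely a lower bound, and ensuring the recombined tuple has all entries nonzero modulo $n$ so that Lemma \ref{lemma:associate} applies. The genuinely delicate sub-case is $q = 3$ with $q\,\|\,n$ and $f = 0$: there every triple whose entries are all nonzero modulo $3$ forces $3 \mid a_0a_2 - a_1^2$, so one coordinate must vanish modulo $3$. To handle this I would use the hypothesis $n > 3$ (so that another prime divides $n$) together with the freedom at the other primes, which is exactly the purpose of the non-vanishing clause ``for every $i$ there exists $j$ with $a_i^{(j)}\not\equiv 0$'' in Proposition \ref{proposition:composite_groups}: placing the forced zero in a single coordinate at the prime $3$ and keeping that coordinate nonzero at another prime yields a tuple with all entries nonzero modulo $n$.
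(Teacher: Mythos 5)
Your proposal is correct in outline, but it takes a genuinely different route from the paper. The paper works globally in $\Z[\zeta_3]$: it rewrites $a_0a_2-a_1^2\equiv -(a_0^2+a_0a_1+a_1^2)\pmod{n}$, recognizes $a_0^2+a_0a_1+a_1^2$ as the norm $N(a_0-a_1\zeta_3)$, and settles both directions by unique factorization of ideals --- inert primes $q\equiv 2\pmod 3$ cannot contribute to $\alpha$ without forcing $q\mid\gcd(a_0,a_1)$ and hence $q\nmid n$, ramification at $(1-\zeta_3)$ caps $v_3(\alpha)$ at $1$, and multiplicativity of the norm produces a single element $a_0-a_1\zeta_3$ realizing any admissible $\alpha$ at all primes simultaneously, with the explicit triangles $[1,1,n-2]$ and $[\frac{n}{3}-1,\frac{n}{3},\frac{n}{3}+1]$ covering $\alpha=1$. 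You localize instead: your necessity argument via the discriminant $-3a_1^2$ of $t^2+a_1t+a_1^2$, the quadratic-residue criterion for $-3$ modulo $q$, and the explicit valuation computation at $3$ is the elementary shadow of the paper's splitting/ramification analysis, and it is complete and correct (including the observation that $a_1$ must be a unit modulo any $q\mid\alpha$); your sufficiency argument replaces ideal factorization by Hensel lifting of a cube root of unity, a perturbation $a_0a_2\equiv 1+q^fu$ that pins down the \emph{exact} valuation, and recombination through Propositions \ref{proposition:combine_groups} and \ref{proposition:composite_groups} plus Lemmas \ref{lemma:associate} and \ref{lemma:associate_groups}. Your approach buys elementarity (no algebraic number theory beyond quadratic reciprocity) and transparent control of exact valuations; the paper's buys a one-shot global construction that sidesteps most of your CRT non-vanishing bookkeeping except at $\alpha=1$. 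Two small points to tend when writing this up: you never state the (easy) local piece at primes $q\equiv 2\pmod 3$, where $f=0$ is forced and, e.g., $[1,1,-2]$ modulo $q^e$ has invariant $\gcd(q^e,-3)=1$; and your identification of the delicate sub-case is right precisely because when $9\mid n$ the coordinate forced to vanish modulo $3$ need only avoid vanishing modulo $3^{v_3(n)}$ (take it $\equiv 3\pmod 9$, as in $[3,1,5]$ modulo $9$), so the genuinely hard configuration is indeed $f=0$ with $3$ exactly dividing $n$, where $n>3$ guarantees the second prime your argument requires.
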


The proof of this theorem utilizes results from algebraic number theory. Use any introductory graduate book on the topic, such as \cite{L14}, as a reference.

\begin{proof}
    Recall that the monodromy group associated to the triangle $(a_0,a_1,a_2)$ modulo $n$ is $(C_n\times C_{n\slash \alpha})\rtimes C_3$ where $\alpha=\gcd(a_0a_2-a_1^2,n).$ What values can $a_0a_2-a_1^2$ take modulo $n$?

    Observe that $a_2\equiv -a_0-a_1\pmod n$. Hence, $a_0a_2-a_1^2\equiv a_0(-a_0-a_1)-a_1^2\equiv -(a_0^2+a_0a_1+a_1^2)\mod n.$ Further observe that $a_0^2+a_0a_1+a_1^2=N(a_0-a_1\zeta_3)$ where $\zeta_3$ is a third root of unity and $N$ is the norm map from $\Z[\zeta_3]$ to $\Z$. So we can answer the question about the possible values of $\alpha$ by asking what values are in the image of the norm map. However, there are some restrictions on $a_0$ and $a_1$. Since $\gcd(a_0,a_1,a_2,n)=1$ and $a_0+a_1+a_2=n$, we deduce that $\gcd(a_0,a_1,n)=1$. Hence, if $a_0$ and $a_1$ have a common factor greater than $1$, that factor does not divide $n$. Therefore, to find a triangle modulo $n$ with monodromy group $(C_n\times C_{n\slash \alpha})\rtimes C_3$, we must find an ideal $(a_0-a_1\zeta_3)$ in $\Z[\zeta_3]$ with the properties that $\gcd(N(a_0-a_1\zeta_3),n)=\alpha$ and $\gcd(a_0,a_1,n)=1$. 
    
    The fact that the norm map is multiplicative will allow us to answer the question by examining ideals with norm of prime power order. Since ideals factor uniquely as products of prime ideals in $\Z[\zeta_3]$, suppose the ideal $(a_0-a_1\zeta_3)=\prod{\frak{p}_j^{n_j}}$ where the $\frak{p}_j$ are distinct prime ideals in $\Z[\zeta_3]$. If $\frak{p}_j=(b_0-b_1\zeta_3)$ then $\gcd(b_0,b_1,n)=1$. If $\gcd(b_0,b_1,n)\ne 1$, then $\gcd(a_0,a_1,n)\ne 1$. Secondly, if $p^{n_j}|\gcd(N(a_0-a_1\zeta_3),n)$ one of the following three situations must arise:
    \begin{enumerate}
        \item $\frak{p}^{n_j/2}=(p)^{n_j/2}$ is in the factorization of the ideal $(a_0-a_1\zeta)$ if $\frak{p}$ is an inert prime with $N(\frak{p})=p^2$.
        \item $\frak{p}_1^{x}\frak{p}_2^{n_j-x}$ is in the factorization of the ideal $(a_0-a_1\zeta_3)$ if $\frak{p}_1$ and $\frak{p}_2$ are the two primes above $(p)$ in $\Z[\zeta_3]$. In this case, $N(\frak{p}_1)=N(\frak{p}_2)=p$.
        \item $\frak{p}^{n_j}$ is in the factorization of the ideal $(a_0-a_1\zeta)$ if $\frak{p}$ is a ramified prime with $N(\frak{p})=p$.
    \end{enumerate}

   To summarize, we want to know if, when $p$ is a prime dividing $n$, does there exist an ideal $(b_0-b_1\zeta_3)$ satisfying $N(b_0-b_1\zeta_3)=p^{n_j}$ with $p^{n_j}|n$ and $\gcd(b_0,b_1,n)=1$?

    First consider a prime $p\equiv 2\mod 3$. Observe that the ideal $(p)\subset \Z[\zeta_3]$ is an inert prime ideal that has norm $p^2$. Hence, $p$ is not in the range of the norm map. If $b_0-b_1\zeta_3\in\Z[\zeta_3]$ has norm $p^{n_j}$ then the ideal generated by $b_0-b_1\zeta_3$ has the property $(b_0-b_1\zeta_3)=(p^{n_j/2})$ since ideals factor uniquely as products of prime ideals in $\Z[\zeta_3]$. Hence, $p$ divides $b_0$ and $b_1$, which implies $p\nmid n$. Hence, $p\not\equiv 2\mod 3$.

    Now consider a prime $p\equiv 1\mod 3$. There is a prime ideal $(y-z\zeta_3)$ of norm $p$ since the ideal $(p)$ splits in $\Z[\zeta_3]$. Note that $\gcd(y,z)=1$ since $N(y-z\zeta_3)=y^2+yz+z^2=p$. Set $(y-z\zeta_3)^{n_j}=(b_0-b_1\zeta_3)$. Observe that the ideal $(b_0-b_1\zeta_3)$ is an ideal with norm $p^{n_j}$. Now, we deduce $\gcd(b_0,b_1)=1$ from the fact that ideals factor uniquely in $\Z[\zeta_3]$. Since $N(b_0-b_1\zeta_3)=p^{n_j}$, the only factor they could have in common is $p$. But if $p|b_0$ and $p|b_1$ then the ideal $(p)$ would divide $(y-z\zeta_3)^{n_j}$, which is a contradiction since the ideal $(p)$ factors as a product of two distinct prime ideals of norm $p$, namely $(y-z\zeta_3)$ and $(y-z\zeta_3^2)$. Clearly, $(y-z\zeta_3^2)$ is not in the unique factorization of $(y-z\zeta_3)^{n_j}$. Hence, $\gcd(b_0,b_1)=1$. Therefore, if $p\equiv 1\mod 3$ is a prime dividing $n$, then there exist $b_0,b_1$ with $\gcd(b_0,b_1)=1$ and $N(b_0-b_1\zeta_3)=p^{n_j}$.

    Now consider the case when $p=3$. The unique prime ideal of norm $3$ in $\Z[\zeta_3]$ is $(1-\zeta_3)$. If $N(b_0-b_1\zeta_3)=3^{i}$ where $i>1$ then the ideal $(3)$ would divide $(b_0-b_1\zeta_3)$ since the ideal $(1-\zeta_3)^2=(3)$. Since ideals have unique prime ideal factorizations in $\Z[\zeta_3]$, we would have $3|b_0$ and $3|b_1$, a contradiction. Hence, when $p=3$, the only ideal $(b_0-b_1\zeta_3)$ satisfying $N(b_0-b_1\zeta_3)=3^i$ with $3^i|n$ and $\gcd(b_0,b_1,n)=1$ occurs when $i\in\{0,1\}$.

    Using the multiplicative property of the norm map, if $\alpha=3^i\prod_{j}{p_{j}^{n_j}}$ divides $n$ where the $p_j$ are primes congruent to $1$ modulo $3$, $i\in\{0,1\}$, and $n_j\ge 0$, then there exist positive integers $a_0,a_1$ with $\gcd(a_0,a_1,n)=1$, and $\gcd(a_0a_2-a_1^2,n)=\alpha$ if $a_2=n-a_0-a_1.$ To use Lemma \ref{lemma:associate}, we must verify that $a_0,a_1,a_2\not\equiv  0\mod n$. 

    Assume $\alpha\ne 1$. By way of contradiction, assume one of the $a_i\equiv 0\mod n$. Without loss of generality, assume $a_2\equiv 0$.
     In this case, $a_0\equiv -a_1\mod n$. Thus, $a_0^2+a_0a_1+a_1^2\equiv a_0^2\mod n$. Hence, $\gcd(N(a_0-a_1\zeta_3),n)=\gcd(a_0^2+a_0a_1+a_1^2,n)=\gcd(a_0^2,n).$ Since, $\gcd(a_0,a_1,n)=\gcd(a_0,-a_0,n)=1$, then $\gcd(N(a_0-a_1\zeta_3),n)=\gcd(a_0^2,n)=1$, a contradiction.
    
    Thus, if $\alpha\ne 1$, we can use Lemma \ref{lemma:associate} to adjust $[a_0,a_1,a_2]$ so that it is a geometric $3$-gon modulo $n$ without altering the $\gcd$'s above. Thus by Corollary \ref{corollary:triangle}, we have obtained the required monodromy group when $\alpha\ne 1$.

    Now consider the case when $\alpha=1$. Instead of showing $a_i\not\equiv 0\mod n$ in the above construction, we instead find explicit geometric triangles with monodromy group $(C_n\times C_n)\rtimes C_3$. If $3\nmid n$, then consider the triangle $[1,1,n-2]$. Observe that $\gcd(a_0^2+a_0a_1+a_2^2,n)=\gcd(3,n)=1$. Thus, $[1,1,n-2]$ has monodromy group $(C_n\times C_n)\rtimes C_3$ when $3\nmid n$. Now consider the case when $3|n$. Consider the triangle $[\frac{n}{3}-1,\frac{n}{3},\frac{n}{3}+1]$. This is a geometric triangle when $n>3$. Observe that $a_0^2+a_0a_1+a_1^2=(\frac{n}{3}-1)^2+(\frac{n}{3}-1)\frac{n}{3}+(\frac{n}{3})^2=1-n+\frac{n^2}{3}$. Since $3|n$, we see that $a_0^2+a_0a_1+a_1^2\equiv 1\mod n$. Thus $\gcd(a_0^2+a_0a_1+a_1^2,n)=1$ and the monodromy group of $[\frac{n}{3}-1,\frac{n}{3},\frac{n}{3}+1]$ is $(C_n\times C_n)\rtimes C_3$. In the case when $n=3$, there is only one geometric triangle, $[1,1,1]$, which has monodromy group $C_3\rtimes C_3$.
\end{proof}

The following example illustrates how Theorem \ref{theorem:all_triangle_groups} can be used to classify the possible monodromy groups modulo a composite number $n$.

\begin{example}
If $n=81$, there are only two possible monodromy groups. The triangle $[1,2,78]$ has associated monodromy group $(C_{81}\times C_{81})\rtimes C_3$ and the triangle $[1,1,79]$ has associated monodromy group $(C_{81}\times C_{27})\rtimes C_3$. However, there does not exist a triangle with associated monodromy group $(C_{81}\times C_9)\rtimes C_3$ or $(C_{81}\times C_3)\rtimes C_3$ or $(C_{81})\rtimes C_3$.
\end{example}

\subsection{Quadrilateral Billiards Surfaces}
One can also use Lemma \ref{lemma:snf_minors} to produce an analogue of Corollary \ref{corollary:triangle} in the quadrilateral case.

\begin{proposition}\label{proposition:quadrilateral}
Suppose that $[a_0,a_1,a_2,a_3]$ represents a $4$-gon modulo $n$. Let $G(a_0, a_1, a_2, a_3)$ be the monodromy group of the dessin $D(a_0,a_1,a_2,a_3)$ drawn on the quadrilateral billiards surface $X(a_0,a_1,a_2,a_3)$. Then
$$
G(a_0, a_1, a_2, a_3) \cong (C_n \times C_{\frac{n}{d_2}}\times C_{\frac{n}{d_3}}) \rtimes C_4.
$$
where $$d_2=\gcd(a_0a_2-a_3^2,a_0a_1-a_2a_3, a_0^2-a_2^2,a_1a_3-a_2^2,a_0a_3-a_1a_2,a_0a_2-a_1^2,n)$$ and 
$$
d_3=\begin{cases}
\gcd(\frac{(a_0+a_2)((a_0+a_1)^2+(a_1+a_2)^2)}{d_2},n)& if\ d_2\ne n\\
n& if\ d_2=n.
    \end{cases}
$$
\end{proposition}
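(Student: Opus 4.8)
The plan is to run, for $k=4$, the determinantal-divisor argument used in the second proof of Corollary \ref{corollary:triangle}. By Theorem \ref{theorem:main2} the group is $\left(\bigoplus_{i=1}^{4} C_{\delta_i}\right)\rtimes C_4$ with $\delta_i = n/\gcd(d_i',n)$, where $d_1',\dots,d_4'$ are the elementary divisors of the circulant $C$ (I rename them to avoid clashing with the $d_2,d_3$ of the statement). The first move is to exploit the defining relation $a_0+a_1+a_2+a_3=2n\equiv 0\pmod n$: every row and column of $C$ sums to $2n$, so the four columns sum to $0$ in $(\ZnZ)^4$ and $N$ needs only three generators, forcing $\delta_4=1$. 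Concretely, the (mod $n$) unimodular operations ``new last row/column $=$ sum of all rows/columns'' reduce $C$ to $\begin{bmatrix} C' & 0 \\ 0 & 0\end{bmatrix}$ with $C'=\begin{bmatrix} a_0 & a_3 & a_2 \\ a_1 & a_0 & a_3 \\ a_2 & a_1 & a_0\end{bmatrix}$, so $N$ is isomorphic to the column span of $C'$ and it suffices to compute the determinantal divisors of $C'$ modulo $n$ via Lemma \ref{lemma:snf_minors}.

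The entries of $C'$ are $a_0,a_1,a_2,a_3$, so the first determinantal divisor is $\gcd(a_0,a_1,a_2,a_3,n)=1$, giving $\gcd(d_1',n)=1$ and $\delta_1=n$. Next, the $2\times 2$ minors of $C'$ are, up to sign, $a_0^2-a_1a_3,\ a_0a_3-a_1a_2,\ a_3^2-a_0a_2,\ a_0a_1-a_2a_3,\ a_0^2-a_2^2,\ a_1^2-a_0a_2$; a one-line check such as $a_0^2-a_1a_3=(a_0^2-a_2^2)-(a_1a_3-a_2^2)$ shows these generate, together with $n$, the same ideal as the six minors listed in the statement, so the second determinantal divisor equals $d_2$. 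Since $\gcd(d_1',n)=1$ forces $\gcd(d_1'd_2',n)=\gcd(d_2',n)$, this yields $\gcd(d_2',n)=d_2$ and $\delta_2=n/d_2$.

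For the third determinantal divisor the only nonzero $3\times 3$ minor is $\det C'$. The key (and pleasant) computation is that substituting $a_3\equiv -(a_0+a_1+a_2)\pmod n$ collapses $\det C'=a_0^3-2a_0a_1a_3+a_2a_3^2+a_1^2a_2-a_0a_2^2$ to the symmetric expression $(a_0+a_2)\left((a_0+a_1)^2+(a_1+a_2)^2\right)$; expanding both sides confirms the identity modulo $n$. Hence $\gcd(d_1'd_2'd_3',n)=\gcd(d_2'd_3',n)=\gcd\!\left((a_0+a_2)((a_0+a_1)^2+(a_1+a_2)^2),\,n\right)$, which is where the numerator of the stated $d_3$ comes from.

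The final step, and the one I expect to be the main obstacle, is recovering the individual divisor $\gcd(d_3',n)=d_3$ from the product $\gcd(d_2'd_3',n)$ and the known value $\gcd(d_2',n)=d_2$. Elementary divisors are successive quotients of determinantal divisors, but reduction modulo $n$ does not commute with that quotient, so one cannot simply read $\gcd(d_3',n)$ off a gcd of minors; this is exactly what forces the division by $d_2$ and the case split. When $d_2=n$ we have $n\mid d_2'\mid d_3'$, whence $\gcd(d_3',n)=n$, i.e.\ $\delta_3=1$, matching $d_3=n$. When $d_2\ne n$ I would argue prime-by-prime, crucially using the divisibility $d_2'\mid d_3'$: for a prime power $p^a$ exactly dividing $n$, writing $u=v_p(d_2')\le w=v_p(d_3')$, one checks from $v_p(\det C')=u+w$ that $v_p(\det C'/d_2)=w$ when $u\le a$ and that the valuation saturates at $a$ when $u>a$, so in all cases $\gcd(\det C'/d_2,n)$ has $p$-valuation $\min(w,a)=v_p(\gcd(d_3',n))$. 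Assembling $\delta_1=n,\ \delta_2=n/d_2,\ \delta_3=n/d_3,\ \delta_4=1$ into Theorem \ref{theorem:main2} then gives the claimed group. The delicate point throughout is that this extraction must be controlled uniformly over the prime factorization of $n$ (taking care that the stated numerator is a mod-$n$ representative of $\det C'$), and the clean determinant identity above is what makes that control feasible.
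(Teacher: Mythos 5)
Your proposal follows essentially the same route as the paper's proof: reduce the $4\times 4$ circulant to a $3\times 3$ block using the relation $a_0+a_1+a_2+a_3\equiv 0$, read off the determinantal divisors via Lemma \ref{lemma:snf_minors}, match the six distinct $2\times 2$ minors, and factor the $3\times 3$ determinant as $\pm(a_0+a_2)\bigl((a_0+a_1)^2+(a_1+a_2)^2\bigr)$. In fact your prime-by-prime extraction lemma --- showing that $\gcd\bigl(\tilde{d}_2\tilde{d}_3/d_2,\,n\bigr)=\gcd(\tilde{d}_3,n)$ in both the case $u\le a$ and the saturating case $u>a$ --- is \emph{more} careful than the paper, which silently replaces the integer elementary divisor $\tilde{d}_2$ by $d_2=\gcd(\tilde{d}_2,n)$ in the denominator without comment; your computation is exactly the justification that step needs, and it is correct.

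There is, however, one genuine gap, precisely at the spot you flag as delicate. Working with the actual $a_3$ (so $a_0+a_1+a_2+a_3=2n$ over $\Z$), your determinant identity holds only modulo $n$: $\det C'$ equals the stated numerator $P=(a_0+a_2)\bigl((a_0+a_1)^2+(a_1+a_2)^2\bigr)$ only up to a multiple of $n$. But $\gcd(x/d_2,n)$ is \emph{not} determined by $x \bmod n$ once $d_2>1$; for instance $\gcd(4/2,4)=2$ while $\gcd(8/2,4)=4$, even though $4\equiv 8 \pmod 4$. So the observation that ``the stated numerator is a mod-$n$ representative of $\det C'$'' does not license swapping $\det C'$ for $P$ inside $\gcd(\cdot/d_2,n)$: as written, your argument proves the proposition with numerator $\det C'$ rather than the stated $P$. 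The fix is the paper's opening move: replace $a_3$ by $\tilde{a}_3=-(a_0+a_1+a_2)$ \emph{over the integers} before doing any linear algebra. The modified matrix agrees with $C'$ modulo $n$, so the group and every $\gcd(\tilde{d}_i,n)$ are unchanged; and since $P$ involves only $a_0,a_1,a_2$, the identity $\det=\pm P$ then holds exactly over $\Z$, giving $P=\pm\tilde{d}_1\tilde{d}_2\tilde{d}_3$ with $\gcd(\tilde{d}_1,n)=1$, after which your valuation lemma applies verbatim to yield $\gcd(P/d_2,n)=\gcd(\tilde{d}_3,n)=d_3$. (One could instead salvage your version by noting that both $\det C'$ and $P$ are products of invariant factors of integer lifts of the same mod-$n$ matrix, and that uniqueness of the invariant factor decomposition over $\Z/n\Z$ forces the two extractions to agree --- but that is an additional argument you would need to supply, not a consequence of mod-$n$ representativity alone.)
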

\begin{proof}
The normal subgroup $N$ of the associated monodromy group is represented by the column span of $C=\begin{bmatrix}
a_0 & a_1 & a_2 &a_3\\
 a_1 & a_2 & a_3 &a_0\\
a_2 & a_3& a_0&a_1\\
a_3& a_0& a_1&a_2
\end{bmatrix}$
 over $\mathbb{Z}/n\mathbb{Z}$. Let $\tilde{a}_3=-a_0-a_1-a_2$. Consider the matrix $C'=\begin{bmatrix}
a_0 & a_1 & a_2 &\tilde{a}_3\\
 a_1 & a_2 & \tilde{a}_3 &a_0\\
a_2 & \tilde{a}_3&a_0&a_1\\
\tilde{a}_3& a_0& a_1&a_2
\end{bmatrix}.$ Observe that $C\equiv C'\mod n$ and thus they have the same elementary divisors modulo $n$. We will proceed by finding the elementary divisors of $C'$ over $\Z$ and then reducing them modulo $n$ to get the elementary divisors of $C'$. Let $d_1,d_2,d_3,d_4$ be the elementary divisors of $C$ and let $\tilde{d}_1,\tilde{d}_2,\tilde{d}_3,\tilde{d}_4$ be the elementary divisors of $C'$. Since $\gcd(a_0,a_1,a_2,\tilde{a}_3,n)=\gcd(a_0,a_1,a_2,a_3,n)=1$, the $\gcd$ of the one by one minors is $1$. Hence, $d_1=\tilde{d}_1=1$ by Lemma \ref{lemma:snf_minors}. 

 Observe that 
$$
C'=\begin{bmatrix}
a_0 & a_1 & a_2 &\tilde{a}_3\\
 a_1 & a_2 & \tilde{a}_3 &a_0\\
a_2 & \tilde{a}_3& a_0&a_1\\
\tilde{a}_3& a_0& a_1&a_2
\end{bmatrix}\\
=
\begin{bmatrix}
1 & 0 & 0 &0\\
0 & 1& 0 &0\\
0&0&1&0\\
-1&-1&-1&1
\end{bmatrix}
\begin{bmatrix}
a_0 & a_1 & a_2 &0\\
 a_1 & a_2 & \tilde{a}_3 &0\\
a_2 & \tilde{a}_3& a_0&0\\
0& 0& 0&0
\end{bmatrix}
\begin{bmatrix}
1 & 0 &0& -1 \\
0 & 1 &0& -1\\
0 & 0 &1&-1\\
0&0&0&1
\end{bmatrix}.
$$
Thus the elementary divisors of $C'$ are the same modulo $n$ as the elementary divisors of $$C''=\begin{bmatrix}
a_0 & a_1 & a_2 &0\\
 a_1 & a_2 & \tilde{a}_3 &0\\
a_2 & \tilde{a}_3& a_0&0\\
0& 0& 0&0
\end{bmatrix}.$$ Hence, $d_4=\tilde{d}_4=0$. To compute $d_2$, we compute the gcd of the 2 by 2 minors of $C''$ of which there are only $9$ that are nonzero. Three of the minors are duplicates, thus leaving us with $6$. These minors are
$\{a_0a_2-\tilde{a}_3^2,a_0a_1-a_2\tilde{a}_3, a_0^2-a_2^2,a_1\tilde{a}_3-a_2^2,a_0\tilde{a}_3-a_1a_2,a_0a_2-a_1^2 \}$. Using Lemma \ref{lemma:snf_minors}, we obtain $d_2=\gcd(\tilde{d}_2,n)=\gcd(a_0a_2-a_3^2,a_0a_1-a_2a_3, a_0^2-a_2^2,a_1a_3-a_2^2,a_0a_3-a_1a_2,a_0a_2-a_1^2,n)$.

Lastly, $\tilde{d}_3$ will be equal to the third elementary divisor of $C'$ which is the same as the third elementary divisor of $ \begin{bmatrix}
a_0 & a_1 & a_2 \\
 a_1 & a_2 & \tilde{a}_3 \\
a_2 & \tilde{a}_3& a_0\\
\end{bmatrix}.$ By Lemma \ref{lemma:snf_minors}, we know that $\tilde{d}_2\tilde{d}_3=\det \begin{bmatrix}
a_0 & a_1 & a_2 \\
 a_1 & a_2 & \tilde{a}_3 \\
a_2 & \tilde{a}_3& a_0\\
\end{bmatrix}=a_0^2a_2+2a_1a_2\tilde{a}_3-a_2^3-a_0\tilde{a}_3^2-a_0a_1^2=-(a_0+a_2)((a_0+a_1)^2+(a_1+a_2)^2)$. Hence, $\tilde{d}_3=\frac{(a_0+a_2)((a_0+a_1)^2+(a_1+a_2)^2)}{\tilde{d}_2}$ provided $\tilde{d}_2\ne 0$. If $\tilde{d}_2=0$ then $\tilde{d}_3=0$. Therefore, $d_3=\gcd(\tilde{d}_3,n)=\gcd(\frac{(a_0+a_2)((a_0+a_1)^2+(a_1+a_2)^2)}{d_2},n)$ unless $d_2=n$ in which case $d_3=n$.
\end{proof}

\section{Future Directions}


There are many questions that naturally arose in the study of monodromy groups of dessin drawn on rational billiards surfaces. Here are some possible future questions to investigate.

\begin{question}
Throughout this paper, we used Proposition \ref{proposition:adjust_angles}, Lemma \ref{lemma:associate}, and Lemma \ref{lemma:associate_groups} many times to produce a polygon with the same monodromy group as a particular algebraic polygon. Using Lemma \ref{lemma:associate}, we can produce an associate {\it convex} polygon in the case where the modulus $n=p$ is prime and $p\ge k$. It is natural to ask if $G$ is the monodromy group of a $k$-gon modulo $n$, is it the monodromy group of a convex $k$-gon modulo $n$?
\end{question}

\begin{question} How can one generalize Theorem \ref{theorem:monodromy_groups} to primes $p\le k$? For $p\le k$, a monodromy group attained by an algebraic $k$-gon may not be attainable by a $k$-gon. For example, $x^6-1=(x-1)^2(x^2+x+1)^2$ modulo $2$. Thus, there exist algebraic $6$-gons modulo $2$ with monodromy groups $C_2\rtimes C_6$, $C_2^2\rtimes C_6$, $C_2^3\rtimes C_6$, $C_2^4\rtimes C_6$, and $C_2^5\rtimes C_6$. However, there is only one $6$-gon modulo $2$, namely $[3,1,1,1,1,1]$, which has monodromy group $C_2\rtimes C_6$. 
\end{question}

\begin{question}
Can one generalize Proposition \ref{proposition:quadrilateral} to $k$-gons where $k>4$?
\end{question}

\begin{question}
In Theorem \ref{theorem:all_triangle_groups}, we classified which groups appear as the monodromy group of a triangle. Can one prove an analogous result for the monodromy groups that arise for an arbitrary $k$-gon?
\end{question}


\bibliographystyle{plain}
\bibliography{main}\newpage

\begin{thebibliography}{10}

\bibitem{A09}
P.~Aluffi.
\newblock {\em Algebra: Chapter 0}.
\newblock American Mathematical Society, 2009.

\bibitem{AI88}
E.~Aurell and C.~Itzykson.
\newblock Rational billiards and algebraic curves.
\newblock {\em Journal of Geometry and Physics}, 5(2):191–208, 1988.

\bibitem{EFKT}
A.~Efrat, R.~Fulek, S.~Kobourov, and C.~D. Tóth.
\newblock Polygons with prescribed angles in 2d and 3d.
\newblock {\em Lecture Notes in Computer Science}, page 135–147, 2020.

\bibitem{H86}
J.~A. Howell.
\newblock Spans in the module $({Z}_m)^s$.
\newblock {\em Linear and Multilinear Algebra}, 19(1):67–77, 1986.

\bibitem{K49}
I.~Kaplansky.
\newblock Elementary divisors and modules.
\newblock {\em Transactions of the American Mathematical Society},
  66(2):464–491, 1949.

\bibitem{LZ04}
S.~K. Lando and A.~K. Zvonkin.
\newblock {\em Graphs on surfaces and their applications}.
\newblock Springer, 2004.

\bibitem{L14}
S.~Lang.
\newblock {\em Algebraic Number Theory}.
\newblock Springer, 2014.

\bibitem{LLS74}
M.~D. Larsen, W.~J. Lewis, and T.~S. Shores.
\newblock Elementary divisor rings and finitely presented modules.
\newblock {\em Transactions of the American Mathematical Society},
  187:231--248, 1974.

\bibitem{MMSV21}
M.~Mabe, R.~A. Moy, J.~Schmurr, and J.~Varlack.
\newblock Monodromy groups of dessins d'enfant on rational triangular billiards
  surfaces.
\newblock {\em Involve, a Journal of Mathematics}, 16(1):49–58, 2023.

\bibitem{MR09}
A.~Miller and V.~Reiner.
\newblock Differential posets and smith normal forms.
\newblock {\em Order}, 26(3):197–228, 2009.

\bibitem{SMG19}
J.~Schmurr, J.~L. McCartney, and J.~Grzegrzolka.
\newblock Cayley graphs on billiard surfaces, 2019.
\newblock arXiv:1910.06377.

\bibitem{vorobets1996planar}
Y.~B. Vorobets.
\newblock Planar structures and billiards in rational polygons: the veech
  alternative.
\newblock {\em Russian Mathematical Surveys}, 51(5):779, 1996.

\bibitem{ZK}
A.~N. Zemlyakov and A.~B. Katok.
\newblock Topological transitivity of billiards in polygons.
\newblock {\em Mathematical Notes of the Academy of Sciences of the USSR},
  18(2):760–764, 1975.

\end{thebibliography}

\end{document}